\newtheorem{lemma}{Lemma}[section]
\newtheorem{proposition}[lemma]{Proposition}
\newtheorem{theorem}{Theorem}
\newtheorem{corollary}{Corollary}
\newtheorem{claim}[lemma]{Claim}
\theoremstyle{definition}
\newtheorem{remark}[lemma]{Remark}
\newcommand{\commentout}[1]{}
\newcommand{\ta}{\ensuremath{\widetilde{a}\xspace}}
\newcommand{\tb}{\ensuremath{\widetilde{b}\xspace}}
\newcommand{\tc}{\ensuremath{\widetilde{c}\xspace}}
\newcommand{\td}{\ensuremath{\widetilde{d}\xspace}}
\newcommand{\tu}{\ensuremath{\widetilde{u}\xspace}}
\newcommand{\tv}{\ensuremath{\widetilde{v}\xspace}}
\newcommand{\tw}{\ensuremath{\widetilde{w}\xspace}}
\newcommand{\tx}{\ensuremath{\widetilde{x}\xspace}}
\newcommand{\ts}{\ensuremath{\widetilde{s}\xspace}}
\newcommand{\ty}{\ensuremath{\widetilde{y}\xspace}}
\newcommand{\tz}{\ensuremath{\widetilde{z}\xspace}}
\newcommand{\tB}{\ensuremath{\widetilde{B}\xspace}}
\newcommand{\tS}{\ensuremath{\widetilde{S}\xspace}}
\newcommand{\tG}{\ensuremath{\widetilde{G}\xspace}}
\newcommand{\tX}{\ensuremath{\widetilde{X}\xspace}}
\newcommand{\bX}{\ensuremath{{X}\xspace}}
\newcommand{\wt}{\widetilde}
\begin{document}

\thispagestyle{empty}
\centerline{\Large\bf On two conjectures of Maurer concerning basis graphs of matroids}

\vspace{10mm}

\centerline{{\sc J\'er\'emie Chalopin$^{\small 1}$, Victor
Chepoi$^{\small 1}$,} and {\sc Damian
Osajda$^{\small {2,3}}$} }

\vspace{3mm}

\medskip
\begin{small}

\medskip
\centerline{$^{1}$Laboratoire d'Informatique Fondamentale, Aix-Marseille Universit\'e and CNRS,}
\centerline{Facult\'e des Sciences de Luminy, F-13288 Marseille Cedex 9, France}

\centerline{\texttt{\{jeremie.chalopin, victor.chepoi\}@lif.univ-mrs.fr}}

\medskip
\centerline{$^{2}$Universit\"at Wien, Fakult\"at f\"ur Mathematik}
\centerline{Oskar-Morgenstern-Platz 1, 1090 Wien, Austria}

\medskip
\centerline{$^{3}$Instytut Matematyczny, Uniwersytet Wroc{\l}awski,}
\centerline{pl. Grunwaldzki 2/4, 50-384 Wroc{\l}aw, Poland}

\centerline{\texttt{dosaj@math.uni.wroc.pl}}

\end{small}

\bigskip\bigskip\noindent {\footnotesize {\bf Abstract.}  We characterize $2$--dimensional complexes associated canonically with basis graphs of matroids as simply connected triangle-square complexes satisfying some local conditions. This proves a version of a (disproved) conjecture by Stephen Maurer (Conjecture 3 of S. Maurer, Matroid basis graphs I, JCTB 14 (1973), 216--240).  We also  establish Conjecture 1 from the same paper about the redundancy of the conditions in the characterization of basis graphs. We indicate positive-curvature-like aspects of the local properties of the studied complexes. We characterize similarly the corresponding $2$--dimensional complexes of even $\Delta$--matroids.}

\thispagestyle{empty}

\section{Introduction}
\label{s:intro}

Matroids constitute an important unifying structure in
combinatorics, algorithmics, and combinatorial optimization ---
cf.\ e.g.\ \cite{Oxley} and references therein.
A \emph{matroid} on a finite set of elements $I$ is a collection
$\mathcal B$ of subsets of $I,$ called \emph{bases,}  which satisfy the
following exchange property: for all $A,B\in {\mathcal B}$ and $a\in A\setminus B$
there exists $b\in B\setminus A$ such that $A\setminus
\{ a\}\cup \{ b\}\in {\mathcal B}$ (the base $A\setminus \{ a\} \cup \{ b\}$ is obtained
from the base $A$ by an \emph{elementary exchange}). The \emph{basis graph} $G=G({\mathcal B})$
of a matroid $\mathcal B$ is the
graph whose vertices are the bases of $\mathcal B$ and edges are the
pairs  $A,B$ of bases differing by an elementary exchange, i.e., $\vert
A\Delta B\vert=2.$   Basis graphs faithfully represent their
matroids \cite{HoNoTo,Mau}, thus studying the basis graph amounts
to studying the matroid itself.

By the exchange property, basis graphs  are connected. For any two bases $A$ and $B$ at distance $2$
there exist at most four bases adjacent to $A$ and $B$: if $A\setminus B=\{ a_1,a_2\}$
and $B\setminus A=\{ b_1,b_2\}$, then these bases have the form $A\setminus \{ a_i\}\cup \{ b_j\}=B\setminus \{ b_j\}\cup \{ a_i\}$
for $i,j\in \{ 1,2\}.$ On the other hand, the exchange property ensures that at least one of the pairs
$A\setminus \{ a_1\}\cup \{ b_1\}, A\setminus \{ a_2\}\cup \{ b_2\}$ or
$A\setminus \{ a_1\}\cup \{ b_2\}, A\setminus \{ a_2\}\cup \{ b_1\}$ must be bases.
Together with $A$ and $B$, this pair of bases $C,C'$ induce a square in the basis graph. Therefore,
$A$ and $B$, together with their common neighbors induce a square, a pyramid, or an octahedron, i.e., basis graphs satisfy what we call
the \emph{interval condition}. The exchange property of bases also shows that if $A,C,B,C'$ induce a square in the basis graph,
then for any other base $D\in \mathcal B,$
the equality $d(D,A)+d(D,B)=d(D,C)+d(D,C')$ holds (i.e., the total  number of elementary exchanges to transform $D$ to $A$ and $B$
equals to the total number of exchanges to
transform $D$ to $C$ and $C'$). Following \cite{Mau}, we call this property of basis graphs the \emph{positioning condition}.
Finally, by Lemma 1.8 of \cite{Mau}, the subgraph
induced by all bases adjacent to a given base is the line graph of a finite bipartite graph; we will call it
the \emph{link condition}.

In \cite[Theorem 2.1]{Mau} Maurer characterized the basis graphs of matroids as connected graphs satisfying the three conditions
above --- see Theorem~\ref{Th_Mau1} in Section~\ref{s:Maurer} below for the
precise statement and for a stronger version of this characterization provided in \cite[Theorem 3.1]{Mau}.
Furthermore, in \cite[Theorem 3.5]{Mau}, it is established that under some additional conditions the link condition is
redundant and Conjecture 1 of \cite{Mau} asks if this is the case in
general. Our first result provides a positive answer to this conjecture. (Note that for a finite graph $G$ the
finiteness assumption on a link is trivially satisfied.)

\begin{theorem} \label{link} The link condition  is redundant for all basis graphs, in the following sense. A graph $G$ is the basis
graph of a matroid if and only if $G$ is connected, satisfies the interval and the positioning conditions, and has at least one
vertex with finitely many neighbors.
\end{theorem}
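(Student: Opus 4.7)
The plan is to reduce Theorem~\ref{link} to Maurer's Theorem~3.5 in \cite{Mau}, which already establishes the link condition (and hence the basis-graph property) from the interval and positioning conditions under a local finiteness hypothesis. The new ingredient is therefore to \emph{propagate} finite degree from a single vertex to all vertices of $G$.

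Suppose $v\in V(G)$ satisfies $|N(v)|<\infty$ and fix $w\in N(v)$; it suffices to prove $|N(w)|<\infty$ and then iterate along a path, invoking connectedness. Write $N(w)=\{v\}\cup (N(w)\cap N(v))\cup U$, where $U$ collects those $u\in N(w)$ with $d(u,v)=2$; the first two pieces are already finite. For each $u\in U$, the interval condition forces $S_u := N(u)\cap N(v)$ to be a subset of $N(v)$ of cardinality $2$, $3$, or $4$, and $w\in S_u$. Moreover, in every one of the three permitted shapes of $I(u,v)$ one locates a non-adjacent pair $s,s'\in S_u$: the two common neighbors in the square case, the two leaves of the path-base in the pyramid case, and any antipodal pair of the equatorial $4$-cycle in the octahedron case. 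Hence $v$--$s$--$u$--$s'$--$v$ is an induced $4$-cycle, i.e.\ a square in Maurer's sense with diagonals $\{v,u\}$ and $\{s,s'\}$.

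The crux is that $u\mapsto S_u$ is injective on $U$. Indeed, suppose $u'\in U$ also satisfies $S_{u'}=S_u$, and apply the positioning condition to the square above with $D=u'$:
\[
d(u',v)+d(u',u)=d(u',s)+d(u',s').
\]
Since $s,s'\in S_{u'}$ we have $d(u',s)=d(u',s')=1$, while $u'\notin\{v\}\cup N(v)$ gives $d(u',v)=2$; therefore $d(u',u)=0$ and $u'=u$. Consequently $|U|$ is bounded by the number of subsets of $N(v)$ of size at most $4$ containing $w$, a finite quantity, so $N(w)$ is finite. Propagating along edges shows that $G$ is locally finite, and Maurer's Theorem~3.5 followed by Maurer's Theorem~2.1 then identifies $G$ as the basis graph of a matroid; the converse direction of Theorem~\ref{link} is immediate from the discussion preceding its statement. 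The principal obstacle is the injectivity claim, which rests on the observation that each of the three permitted shapes in the interval condition hosts an induced $4$-cycle through $v$ and $u$ on which the positioning identity can be activated.
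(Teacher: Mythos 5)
Your propagation argument (bounding $|N(w)|$ for $w\sim v$ by showing $u\mapsto S_u:=N(u)\cap N(v)$ is injective on $\{u\in N(w): d(u,v)=2\}$ via the positioning condition applied to the square $vsus'$) is correct and a clean piece of reasoning. However, it does not get you to the conclusion, and the proof has a genuine gap at the point where you invoke Maurer's Theorem~3.5.

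The gap is in the claim that Theorem~3.5 of \cite{Mau} ``already establishes the link condition\ldots from the interval and positioning conditions under a local finiteness hypothesis.'' That is not what Theorem~3.5 says, and it cannot be: Conjecture~1 of \cite{Mau}, the very statement being proved here, asserts precisely that the link condition follows from connectedness, interval, and positioning for \emph{finite} graphs (where local finiteness is automatic). If Theorem~3.5 had established this, Conjecture~1 would already be closed and there would be nothing for Theorem~\ref{link} to prove. The ``additional conditions'' in Theorem~3.5 are structural, not finiteness assumptions. The real content that your proof is missing is exactly what the paper's Section~\ref{s:link} supplies: verifying LC$(v)$ at a single finite-degree vertex $v$ via Theorem~\ref{link1}, whose only nontrivial ingredient is Proposition~\ref{prop-no-odd-wheel} — that the interval condition together with the absence of propellers and half open books (both deducible from interval and positioning, cf.\ Lemma~\ref{l:noprop}) excludes all odd wheels $W_{2k+1}$, $k\ge 2$. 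Excluding claws and diamonds from the link is easy; excluding odd holes in the link (equivalently, odd wheels in $G$) is the hard step, and it is entirely absent from your argument. Once LC$(v)$ is known at one vertex, Theorem~\ref{Th_Mau1}(iii) immediately gives the conclusion; this also makes your degree-propagation superfluous, since Maurer's criterion in Theorem~\ref{Th_Mau1}(iii) only needs the link condition at a single vertex, not local finiteness of the whole graph.
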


According to \cite{Bjo} (and implicitly introduced on pp.\ 237--239 in \cite{Mau}), the \emph{basis complex} $X=X({\mathcal B})$
of a matroid $\mathcal B$ is the $2$--dimensional cell complex whose $1$--skeleton is the basis graph $G$, and whose $2$--cells are the
triangles and the  squares of the basis graph.  We call this complex also the \emph{triangle-square complex} of $G$, and denote it by $X(G)$.

From the characterization of basis graphs, Maurer deduced in \cite[Theorem 5.1]{Mau} that all basis  complexes of matroids are simply connected.
Consequently, he proposed (a natural from the topological viewpoint) Conjecture 3 of \cite{Mau}, stating that in the characterization of basis graphs the global (metric)
positioning condition
on $G$ can be replaced by the topological condition of simply connectedness  of the triangle-square complex $X(G)$ of $G$. Donald, Holzmann, and Tobey \cite{DHT}
(as well as Maurer in the personal communication to the
authors of \cite{DHT}) presented counterexamples to this conjecture (as well as to Conjecture 2 of \cite{Mau} about the eventual redundancy of the positioning condition),
i.e., simply connected
triangle-square complexes, satisfying the interval and the link conditions, but not being basis complexes --- cf.\ Section~\ref{s:rem} below.
Nevertheless, the main result of our paper is in the vein of Maurer's Conjecture 3 --- saying that triangle-square complexes of basis graphs of matroids may be characterized
as simply connected complexes satisfying some local conditions.

\begin{theorem}\label{main_th}
  For a graph $G$ the following conditions are equivalent:
  \begin{enumerate}[(i)]
    \item
    $G$ is the basis graph of a matroid;
    \item
    the triangle-square complex $X(G)$ is simply connected and every ball of radius $3$ in $G$ is isomorphic to a ball of radius $3$ in
    the basis graph of a matroid;
    \item
    the triangle-square complex $X(G)$ is simply connected, $G$
    satisfies the interval and the local positioning conditions, and
    $G$ contains at least one vertex with finitely many neighbors.
  \end{enumerate}
\end{theorem}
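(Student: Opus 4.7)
I would organize the proof as the cycle (i)$\Rightarrow$(ii)$\Rightarrow$(iii)$\Rightarrow$(i), with the first two implications essentially bookkeeping and the third carrying the real content. The ultimate reduction is to Theorem~\ref{link} above, which says that a graph is a basis graph of a matroid as soon as it is connected, satisfies the interval and global positioning conditions, and has at least one vertex of finite degree.

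The implication (i)$\Rightarrow$(ii) is immediate: simple-connectedness of $X(G)$ is precisely \cite[Theorem~5.1]{Mau}, and the ball condition is tautological since $G$ is itself a basis graph. For (ii)$\Rightarrow$(iii), the interval and local positioning conditions are both statements about configurations confined to a ball of fixed bounded radius (say~$3$), so they transfer automatically under a local isomorphism with a ball in a basis graph, where Maurer's conditions are known to hold. The existence of a vertex with finitely many neighbors comes from the fact that any basis graph of a matroid is finite, hence so is any of its balls of radius~$3$; and connectedness of $G$ follows from simple-connectedness of $X(G)$ via cellular approximation.

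The substantive direction is (iii)$\Rightarrow$(i). By Theorem~\ref{link} it suffices to deduce the global positioning condition from the local one. To that end, fix a square $S=(A,C,B,C')$ of $G$ and define the positioning defect
\[
  \delta_S(D) := d(D,A) + d(D,B) - d(D,C) - d(D,C').
\]
The local positioning hypothesis gives $\delta_S(D)=0$ whenever $D$ lies in some bounded neighbourhood of $S$, and the goal is to extend this to every $D\in V(G)$. My plan is a van Kampen-style disc argument: choose two combinatorial paths in $G$ from $D$ to a vertex of $S$; these paths close to a loop in the $1$--skeleton of $X(G)$, which, by simple-connectedness, bounds a filling disc built of triangles and squares. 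One then argues by induction on the number of $2$--cells in the filling that $\delta_S$ takes the same value at both endpoints of any path along the boundary, so that in particular the value at $D$ equals the value at a vertex of $S$, namely~$0$.

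The main obstacle, as I see it, is the inductive step: across each triangle and each square in the filling one must show that local positioning forces the variation of the function $v\mapsto \delta_S(v)$ to vanish, using the interval condition to analyze the link of each vertex and to produce the nearby squares to which local positioning may be applied. This is precisely where the counterexamples of Donald--Holzmann--Tobey --- which satisfy simple-connectedness together with the interval and link conditions but fail positioning --- must be excluded by the strengthening from the link condition to the local positioning condition. Once the cell-by-cell analysis is complete, $\delta_S\equiv 0$ for every square $S$ gives the global positioning condition, and Theorem~\ref{link} yields that $G$ is a basis graph of a matroid.
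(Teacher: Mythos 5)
Your reductions (i)$\Rightarrow$(ii)$\Rightarrow$(iii) match the paper, and reducing (iii)$\Rightarrow$(i) to Theorem~\ref{link} (equivalently, to producing the global positioning condition) is also exactly the paper's plan. The gap is in the disc-filling argument you propose for upgrading local to global positioning. You want to fix a square $S=(A,C,B,C')$, fill a loop joining $D$ to a vertex of $S$ by triangles and squares, and show that $\delta_S$ is constant across the filling. But $\delta_S(D')$ is a sum of four \emph{global} distances $d(D',\cdot)$ to the vertices of $S$; when the $2$--cell containing the edge $D'D''$ is far from $S$ in $G$, nothing in the interval or local positioning conditions constrains how those four distances vary across that cell. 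The local positioning hypothesis only speaks about squares whose opposite vertices lie at distance exactly $2$ from the basepoint, so it is silent about a square buried deep in the filling far from $S$. Concretely, for adjacent $D',D''$ each of the four distances can change by $\pm1$ or $0$ independently, and the disc being ``filled'' gives no additional local relation between them; the argument has no inductive engine.

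The paper avoids this by not arguing in $X(G)$ directly at all. It builds the universal cover $\widetilde{X(G)}$ from scratch, as an increasing union of balls $\widetilde B_i$ around a chosen basepoint $\widetilde v$, and the positioning condition \emph{with respect to that basepoint} is maintained as an inductive invariant (condition (U$_i$)) throughout the construction, alongside the triangle and square-pyramid conditions (Q$_i$), local isomorphism properties (R$_i$), (T$_i$), and the square-lifting property (S$_i$). At each stage a new layer $\widetilde S_{i+1}$ is glued on via a carefully designed equivalence relation $\equiv$ on pairs $(\widetilde w,z)$, whose well-definedness (Proposition~\ref{equiv}) is the technical heart and is where the interval, local positioning, no-propeller, and no-half-open-book conditions are actually consumed. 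Having built $\widetilde X$ satisfying TC, SPC, and PC with respect to $\widetilde v$, one checks it is simply connected (Lemma~\ref{l:simconn}) and hence a universal cover; since $X(G)$ is itself simply connected, $\widetilde X\cong X(G)$. Finally, positioning with respect to the one basepoint is promoted to positioning with respect to all basepoints because the universal cover is independent of the choice of $v$ (Lemma~\ref{simplyconnected}). So what you buy with the paper's approach is an inductive propagation mechanism that your van Kampen plan lacks: rather than following $\delta_S$ through a fixed filling disc, one grows the space outward from $v$ and never has to verify the positioning identity for configurations that have not yet been built under control.
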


A formal definition of the local positioning condition is given in
the next section.  This condition, as well as the interval condition, are local because they concern at most quintets of vertices at distance $\le 3$ from each other.

Simple connectivity of basis  complexes of matroids  was used several times in the theory of ordinary and oriented matroids, in particular,
in the proof of Las Vergnas's theorem \cite{LasVer,BjLVStWhZi}
about the characterization of basis orientations of  ordinary matroids. This result was generalized in \cite{BjKoLo} to basis complexes of $3$--connected interval greedoids
and in \cite{Wen} to even $\Delta$--matroids. From  this result also follows that the
$2$--dimensional faces of the basis matroid polyhedron are equilateral
triangles or squares, i.e., the $2$--skeleton of the basis matroid polyhedron is a simply connected subcomplex of the basis complex, namely,
it comprises all triangles and a part of squares of this complex; cf.\ also \cite{BoGeWh}
(a \emph{basis matroid polyhedron} \cite{GeGoMcPhSe} is the convex hull of the characteristic vectors of bases of a matroid).
Moreover, Gelfand et al.\ \cite{GeGoMcPhSe} showed that the $1$--skeleton of a basis matroid polyhedron coincides with the basis graph
of the matroid.

Characterizing spaces by requiring they are simply connected and satisfy some local conditions is natural and appears often in the setting of a (very general) nonpositive curvature. In particular, in a simple but fundamental result, Gromov \cite{Gr} characterized the CAT(0) cubical complexes (i.e., cubical complexes with global nonpositive curvature) as simply connected  cubical complexes in which the links of vertices are flag. Many similar characterizations concerning widely understood nonpositive curvature appeared --- cf.\ e.g.\ \cite{BCCGO} for an example and for further references. Such characterizations are very useful, since they allow to construct objects out of just local conditions: Having a space satisfying given local conditions, its universal cover (whose existence and uniqueness follows from basic algebraic topology) is a simply connected space satisfying the same collection of local conditions. Note (compare also Corollaries~\ref{c:finuni}\&\ref{c:finfun} below) that constructing a complex satisfying our local conditions will finish after finitely many steps. Then either this complex or its finitely sheeted (universal) cover is the basis complex of a matroid.
As a matter of fact, building the universal cover of a triangle-square complex with a prescribed local behavior is our way to prove Theorem \ref{main_th} --- see Theorem~\ref{t:m2} in Section~\ref{s:main}.
Note however that our setting is opposite to the case of nonpositive curvature. Since basis graphs of matroids are finite (unlike universal covers of homotopically nontrivial spaces with nonpositive curvature), Theorem~\ref{main_th} implies immediately the following. (Note that the conditions in the statements below are local.)

\begin{corollary}
\label{c:finuni}
Let $G$ be a connected graph satisfying the interval and the local
positioning conditions, and having at least one vertex with finitely
many neighbors.  Then the $1$--skeleton of the universal cover
$\widetilde{X(G)}$ of its triangle-square complex $X(G)$ is the basis
graph of a matroid.  In particular, $\widetilde{X(G)}$ is a finite
complex.
\end{corollary}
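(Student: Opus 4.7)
My strategy is to apply Theorem~\ref{main_th}, specifically the implication (iii)~$\Rightarrow$~(i), to the $1$-skeleton $\widetilde{G}$ of $\widetilde{X(G)}$. Let $p\colon \widetilde{X(G)}\to X(G)$ denote the covering projection. Since the $2$-cells of $X(G)$ are precisely the triangles and squares of $G$, and a $2$-cell is simply connected and so lifts uniquely once one of its boundary vertices is fixed, the $2$-cells of $\widetilde{X(G)}$ are exactly the triangles and squares of $\widetilde{G}$. Hence $\widetilde{X(G)} = X(\widetilde{G})$, and this complex is simply connected by the defining property of the universal cover.

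What remains is to transfer the interval condition, the local positioning condition, and the finite-degree hypothesis from $G$ to $\widetilde{G}$. The last of these is immediate, since $p$ restricts to a bijection between the closed star of each $\tilde{v}\in \widetilde{G}$ and the closed star of $p(\tilde{v})\in G$, so valences are preserved. For the interval and local positioning conditions, the idea is that each relevant local configuration (at most a quintuple of vertices within a ball of radius $\le 3$, together with the triangles and squares of $G$ spanned by it) defines a simply connected subcomplex of $X(G)$, which therefore lifts isomorphically through $p$. Concretely, if $\tilde{A}, \tilde{B}$ are at distance $2$ in $\widetilde{G}$ with common neighbor $\tilde{C}$, then $p(\tilde{A})$ and $p(\tilde{B})$ must also be at distance exactly $2$ in $G$ (otherwise the triangle on $p(\tilde{A}), p(\tilde{C}), p(\tilde{B})$ would be a $2$-cell of $X(G)$ whose lift would force $\tilde{A}$ and $\tilde{B}$ to be adjacent), and the entire square/pyramid/octahedron at $\{p(\tilde{A}), p(\tilde{B})\}$ lifts to an isomorphic subcomplex containing all common neighbors of $\tilde{A}$ and $\tilde{B}$. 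An analogous analysis treats the local positioning condition.

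With the three hypotheses of Theorem~\ref{main_th}(iii) verified for $\widetilde{G}$, the theorem guarantees that $\widetilde{G}$ is the basis graph of some matroid. Since matroids are defined over a finite ground set, they admit only finitely many bases; hence $\widetilde{G}$ is finite and $\widetilde{X(G)} = X(\widetilde{G})$ is a finite $2$-complex, as claimed.

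The main obstacle is to justify rigorously that the local configurations lift isomorphically. This amounts to identifying the finitely many possible shapes that appear in the interval and local positioning conditions, checking their simple connectivity as subcomplexes of $X(G)$, and combining this with the distance-matching argument sketched above; once this bookkeeping is in place, the corollary reduces cleanly to Theorem~\ref{main_th} together with the finiteness of the ground set of any matroid.
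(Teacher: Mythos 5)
The corollary in the paper is a direct consequence of Theorem~\ref{t:m2}, which already establishes that the $1$--skeleton $\widetilde G$ of $\widetilde{X(G)}$ satisfies the interval condition and the \emph{global} positioning condition; combined with Lemma~\ref{l:noprop}, Theorem~\ref{link1}, and Theorem~\ref{Th_Mau1}, and with the observation that the covering map preserves vertex degrees and that a matroid has only finitely many bases, the conclusion follows at once. Your route is different: you want to verify the hypotheses of Theorem~\ref{main_th}(iii) for $\widetilde G$ by transferring the \emph{local} conditions from $G$ through the covering $p$. This is where the proposal has a genuine gap.

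The transfer works for the interval condition, because a $2$--interval $I(\widetilde A,\widetilde B)$ lies entirely inside the star of a common neighbor $\widetilde C_0$, and $p$ restricts to an isomorphism on stars. But the local positioning condition involves a configuration of diameter $3$ (a square $u_1u_2u_3u_4$ together with a vertex $v$ with $d(v,u_1)=d(v,u_3)=2$, where $d(v,u_2)$ or $d(v,u_4)$ can equal $3$), and no single star contains it. A covering map that is an isomorphism on stars need not preserve distances $\geq 3$: a length--$2$ path in $G$ from $v$ to $u_4$ lifts uniquely from $\widetilde v$, but the endpoint of the lift need not be $\widetilde u_4$, so one can have $d(v,u_4)=2$ while $d(\widetilde v,\widetilde u_4)=3$. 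Consequently $d(v,u_2)+d(v,u_4)=4$ downstairs gives only $d(\widetilde v,\widetilde u_2)+d(\widetilde v,\widetilde u_4)\geq 4$, not equality. The phrase ``defines a simply connected subcomplex of $X(G)$, which therefore lifts isomorphically'' does not fix this: the configuration you must control sits in $\widetilde G$ to begin with (you need to project it, not lift it), and even for a lift the embedding into $\widetilde X$ does not force $\widetilde X$--distances to equal the subcomplex distances. Closing this gap amounts precisely to re-proving the positioning inequality for $\widetilde G$, which is the content of the inductive condition~(U$_i$) in the proof of Theorem~\ref{t:m2}; it is far simpler to invoke Theorem~\ref{t:m2} directly (as the paper implicitly does) than to rederive it.

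Your flag-ness claim $\widetilde{X(G)}=X(\widetilde G)$, the degree-preservation via local star isomorphisms, and the finiteness-of-bases argument are all fine. The finiteness conclusion at the end is correct and matches the paper's.
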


\begin{corollary}
\label{c:finfun}
Let $G$ be a connected graph satisfying the interval and the local
positioning conditions, and having at least one vertex with finitely
many neighbors.  Then the fundamental group $\pi_1(X(G))$ of its
triangle-square complex $X(G)$ is finite.
\end{corollary}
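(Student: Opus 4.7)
The plan is to derive this corollary as an immediate consequence of Corollary \ref{c:finuni} combined with elementary covering space theory; all the substantive work is already contained in the proof of Theorem \ref{main_th} (and hence of Corollary \ref{c:finuni}), so what remains is purely formal.

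Concretely, I would first invoke Corollary \ref{c:finuni} to obtain that the universal cover $\widetilde{X(G)}$ is a finite CW complex. In particular, for any vertex $x$ of $X(G)$ the fiber $p^{-1}(x)$ of the covering projection $p : \widetilde{X(G)} \to X(G)$ consists of finitely many points (it is a subset of the vertex set of $\widetilde{X(G)}$, or at worst of the $0$--skeleton of a subdivision, depending on the choice of CW structure, but in either case finite). Next, since $X(G)$ is path connected and locally nice (a CW complex), and $\widetilde{X(G)}$ is simply connected, $p$ is the universal covering: it is a regular covering whose deck transformation group is canonically isomorphic to $\pi_1(X(G))$, and this group acts freely and transitively on each fiber $p^{-1}(x)$. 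Combining the two observations yields $|\pi_1(X(G))| = |p^{-1}(x)| < \infty$, which is the desired conclusion.

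Since this deduction is purely formal, there is no genuine obstacle at this stage. The only nontrivial ingredient is the input from Corollary \ref{c:finuni}, where the $1$--skeleton of the universal cover is identified via Theorem \ref{main_th} with the basis graph of a matroid (hence, thanks to the vertex with finitely many neighbors together with the interval and local positioning conditions propagating throughout the connected complex, it is in fact a finite graph). Once this is in place, the present corollary is a one-line consequence of the ``number of sheets equals index of the fundamental group'' principle.
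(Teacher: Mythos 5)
Your proof is correct and matches what the paper intends: the paper presents this corollary (together with Corollary~\ref{c:finuni}) as an immediate consequence of Theorem~\ref{main_th} without spelling out details, and your argument — finiteness of the universal cover from Corollary~\ref{c:finuni}, plus the standard fact that $\pi_1(X(G))$ acts freely and transitively on a fiber of the universal covering — is exactly the intended one-line deduction.
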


Thus the collection of our local conditions may be treated as a kind of a positive curvature.
Our characterization might be seen as an analogue of e.g.\ the classical result of Myers \cite{Mye} characterizing spheres by
means of positive curvature. 

Our construction can be used to obtain a similar characterization of
basis graphs of even $\Delta$--matroids, for which an analogue of
Maurer's characterization is provided in \cite{Che_bas} --- see Theorem~\ref{th_Delta} in Section~\ref{s:rem}. This construction may also be useful to obtain
similar characterizations in other cases.
\medskip

\noindent
{\bf Article's structure.}
In the next section, we define the local conditions employed in the formulation of Theorem \ref{main_th} and we prove several auxiliary results. We also provide a
slight enhancement of the original Maurer's characterization. Theorem~\ref{link} is proved in Section~\ref{s:link}. Section~\ref{s:main} is devoted to the proof
of Theorem \ref{main_th}. We conclude in Section~\ref{s:rem} with some examples, in particular we analyze examples of non-basis graphs described in \cite{DHT}, and we extend Theorem \ref{main_th} to even $\Delta$--matroids.

\section{Preliminaries}

\subsection{Graphs.}  All graphs $G=(V,E)$ occurring in this paper are undirected, connected,
without loops or multiple edges, and not necessarily finite (unless stated otherwise).
The \emph{distance} $d(u,v)$ between
two vertices $u$ and $v$ is the length of a shortest $(u,v)$--path, and
the \emph{interval} $I(u,v)$ between $u$ and $v$ consists of all
vertices on shortest $(u,v)$--paths, that is, of all vertices
(metrically) \emph{between} $u$ and $v$: $I(u,v)=\{ x\in V:
d(u,x)+d(x,v)=d(u,v)\}.$ If $d(u,v)=2,$ then we will call $I(u,v)$ a $2$--\emph{interval}.
For two vertices $u$ and $v$ of a graph $G$,
we will write $u\sim v$ if $u$ and $v$ are adjacent and $u\nsim v$, otherwise.
Having vertices $u,v_1,v_2,\ldots,v_k$, we will write $u\sim v_1,v_2,\ldots,v_k$ (respectively,
$u\nsim v_1,v_2,\ldots,v_k$) if $u\sim v_i$ (respectively, $u\nsim v_i$), for every $i$.
For a vertex $v$ of
a graph $G$ and an integer $r\ge 1$, we will denote  by $B_r(v,G)$ the \emph{ball} in $G$ (and the subgraph induced
by this ball)  of radius $r$ centered at  $v$, i.e., $B_r(v,G)=\{ x\in V: d(v,x)\le r\}.$
As usual, $N(v)=B_1(v,G)\setminus\{ v\}$ denotes the set of neighbors of a vertex $v$ in $G$.
The \emph{link} of $v \in V(G)$ is the subgraph of $G$ induced by $N(v)$.

A {\em wheel} $W_k$ is a graph obtained by connecting a single
vertex --- the {\em central vertex} --- to all vertices of the $k$--cycle;
the \emph{almost wheel} $W_4^{-}$ is the graph obtained from $W_4$ by
deleting a spoke (i.e., an edge between the central vertex and a
vertex of the $4$--cycle).
A \emph{pyramid} is the $4$--wheel $W_4$.
A \emph{triangle} and a \emph{square} of $G$ are
subgraphs of $G$ which are induced $3$-- and $4$--cycles. An \emph{octahedron} is the $1$--skeleton of the $3$--dimensional octahedron,
i.e., it is the complete  graph $K_6$ minus a perfect matching. The following two graphs were shown in \cite{Mau}
to be forbidden (as induced subgraphs) in basis graphs of matroids. A \emph{propeller} with \emph{shaft} $uv$ and \emph{tips} $x,y,z$ is the
graph $P$ defined by $V(P) =\{u,v,x,y,z\}$ and $E(P) =
\{uv,ux,uy,uz,vx,vy,vz\}$ (see Figure~\ref{fig-propeller-halfbook}, left). A \emph{half open book}
is the graph $B$ defined by $V(B)=\{u,v,w,x,y,z\}$ and $E(B) = \{uv,ux,vw,vz,wy,xy,xz,yz\}$ (see
Figure~\ref{fig-propeller-halfbook}, right).

We continue with definitions of local and global conditions used in Maurer's and our characterizations of basis graphs.
A graph $G$ satisfies the \emph{interval condition} if each $2$--interval induces a square, a pyramid, or an octahedron.
A graph $G$ satisfies the \emph{link condition at vertex $v$}, denoted LC$(v)$ if the link of $v$ in $G$ is the line graph of a finite bipartite
graph. A graph $G$ satisfies the \emph{link condition} if $G$ satisfies LC$(v)$ for all vertices $v$. Next, we introduce three
global metric conditions  with respect to a given basepoint $v$:

\emph{Triangle condition} TC($v$):  for any two adjacent vertices $u,w$ of $G$ with
$d(v,u)=d(v,w)=k\geq 2$ there exists $x \sim u,w$ such that $d(v,x)=k-1$.

\emph{Square-pyramid condition} SPC$(v)$: for any three vertices $u,w,w'$ of $G$ with
$u\sim w,w'$ and  $2=d(w,w')\le d(v,u)=d(v,w')+1=d(v,w)+1=k+1,$ either there
exists $x\sim w, w'$ such that
$d(v,x)=k-1$, or there exists $x \sim u, w, w'$ and $x' \sim u, w,w'$ such that $x\nsim x'$, and $d(x,v)=d(x',v)=k$.

\emph{Positioning condition} PC$(v)$: for each square $u_1u_2u_3u_4$ of $G$, we have $d(v,u_1)+d(v,u_3)=d(v,u_2)+d(v,u_4)$.

A graph $G$ satisfies the \emph{triangle}, the \emph{square-pyramid}, or the \emph{positioning conditions} if  $G$ satisfies TC$(v),$ SPC$(v),$ or PC$(v),$ respectively,
for all  vertices $v$. A graph $G$ satisfies the \emph{local triangle condition} if for every $v,u,w$ with $u\sim w$ and
$d(v,u)=d(v,w)= 2$ there exists $x \sim v,u,w$. A graph $G$ satisfies the \emph{local positioning condition} if for each square $u_1u_2u_3u_4$ and each vertex $v$ such that $d(v,u_1)=d(v,u_3)=2$, we have $d(v,u_2)+d(v,u_4)=4$.

\begin{figure}[t]
\begin{center}
\scalebox{0.7}{\includegraphics{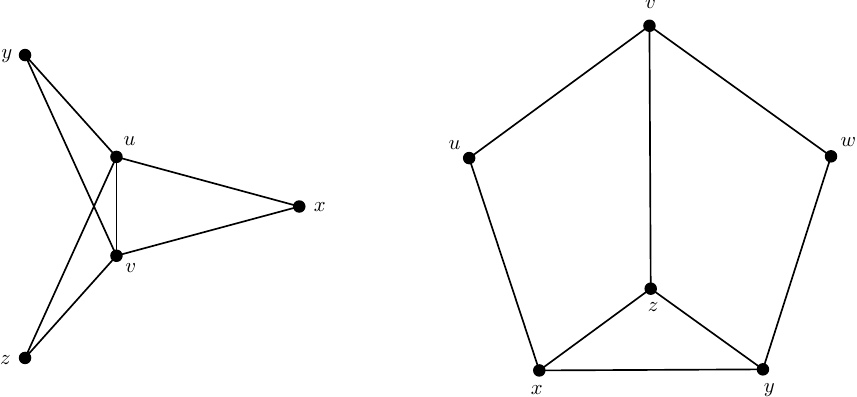}}%
\end{center}
\caption{A propeller (on the left) and a half open book (on the right).}
\label{fig-propeller-halfbook}
\end{figure}

\begin{lemma}
\label{l:noprop}
  If $G$ satisfies the interval and the local positioning conditions,
  then $G$ satisfies the local triangle condition, and $G$ does not
  contain propellers and half open books as induced subgraphs.
\end{lemma}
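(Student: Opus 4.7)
\medskip

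The plan is to prove each of the three conclusions by assuming the contrary and locating an induced square together with a vertex at distance $2$ from a pair of opposite corners that violates the local positioning condition. Only the propeller case requires first invoking the interval condition to produce an auxiliary vertex; identifying the right square and witness vertex is the main obstacle, and once that is done the contradictions are immediate.

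For the half open book with vertex set $\{u,v,w,x,y,z\}$ and the listed edges, I would observe that $u,v,z,x$ induce a $4$--cycle (the would-be diagonals $uz$ and $vx$ are not edges). The vertex $w$ is adjacent only to $v$ and $y$ inside the book, so $d(w,u)=d(w,z)=2$. Local positioning applied to this square along the diagonal $uz$ would then require $d(w,v)+d(w,x)=4$, but $d(w,v)=1$ and $d(w,x)=2$, a contradiction.

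For a propeller with shaft $uv$ and tips $x,y,z$, the starting point is the interval condition on $I(x,y)$: the common neighbors include $u$ and $v$, which are adjacent, so $I(x,y)$ cannot be a square and must therefore be a pyramid or an octahedron. Either structure supplies a further common neighbor $c$ of $x,y$ adjacent to exactly one of $u,v$; after relabelling, I may assume $c\sim v$ and $c\not\sim u$. Then $x,u,y,c$ induce a square, and $z$ satisfies $d(z,x)=d(z,y)=2$ (through $u$ or $v$). Local positioning along the diagonal $xy$ forces $d(z,u)+d(z,c)=4$, but $d(z,u)=1$ and $d(z,c)\le 2$ through $v$, contradicting this.

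For the local triangle condition, suppose $u\sim w$ with $d(v,u)=d(v,w)=2$ while no vertex is adjacent to all of $v,u,w$, and set $A:=N(v)\cap N(u)$. The interval condition forces $|A|\ge 2$, and no element of $A$ can be adjacent to $w$. If $A$ contains two non-adjacent vertices $a,a'$, then $\{v,a,u,a'\}$ induces a square, and $d(w,a)=d(w,a')=2$ yields, by local positioning along the diagonal $aa'$, that $d(w,v)+d(w,u)=4$, which contradicts $d(w,v)+d(w,u)=2+1=3$. Otherwise $A$ is a clique and $I(v,u)=\{v,u\}\cup A$ has the antipodal pair $v,u$ sharing all common neighbors inside a clique; this graph is neither a square nor a pyramid nor an octahedron (in all three, the non-adjacent pair has at least one pair of non-adjacent common neighbors), contradicting the interval condition.
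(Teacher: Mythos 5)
Your proof is correct and takes essentially the same approach as the paper: in each of the three parts you use the interval condition to locate a square and then apply the local positioning condition with a suitable basepoint to obtain a contradiction. The only differences are cosmetic: for the propeller you pick the square $x u y c$ (the paper uses $vxty$ with $t\sim u$ rather than $v$, which is the same by the $u\leftrightarrow v$ symmetry of the shaft); for the half open book you use the square $uvzx$ with basepoint $w$ instead of the paper's $vzyw$ with basepoint $u$; and for the local triangle condition you spell out the vacuous clique case, which the paper avoids by noting directly that the interval condition already supplies a non-adjacent pair of common neighbors.
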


\begin{proof}
Consider three vertices $u,v,w$ such that $u\sim w$ and $d(u,v) =
d(w,v) =2$. By the interval condition, there exist $x,x' \sim u,v$
such that $x \nsim x'$. If $w \nsim x,x'$, then $d(w,x) = d(w,x') =
2$ and $d(w,u)+d(w,v) = 3$, contradicting the local positioning
condition. Consequently, either $x \sim u,v,w$ or $x' \sim u,v,w$ and
thus $G$ satisfies the local triangle condition.

Consider three triangles $uvx$, $uvy$, $uvz$, all three sharing the
common edge $uv$. Suppose that $x \nsim y$ (see
Figure~\ref{fig-propeller-halfbook}, left). By the interval condition,
there exists $t \sim x,y$ such that $t \sim u, t\nsim v$ or $t \sim
v, t\nsim u$, say $t \sim u, t\nsim v$. Since $d(z,t) \leq 2$,
$d(z,v)+d(z,t) \leq 3$.  By the local positioning condition applied to
the square $vxty$ and the basepoint $z$, we get that $d(x,z)+d(z,y)
\leq 3$, and consequently, $z$ is adjacent to $x$ or $y$. Thus, $G$
does not contain propellers.

Suppose now that $G$ contains a half open book where $xyz$ is a
triangle and $uxzv$ and $vzyw$ are squares (see Figure~\ref{fig-propeller-halfbook}, right). Then, considering the
square $vwyz$, we have $d(u,w)=d(u,z)=2$ and $d(u,v)+d(u,y) = 3$,
contradicting the local positioning condition with respect to $u$.
\end{proof}

\subsection{Triangle-square complexes.}  In this paper, we consider only triangle-square complexes,
a particular class of $2$--dimensional cell complexes.  Although most of the notions presented below can be
defined for all cell complexes and some of them for topological spaces, we will introduce them
only for triangle-square complexes.

A \emph{triangle-square complex} is a $2$--dimensional cell complex $X$
in which all $2$--cells are triangles or squares. For a
triangle-square complex $X$, denote by $V(X)$ and $E(X)$ the set of
all $0$--dimensional and $1$--dimensional cells of $X$ and call the
pair $G(X)=(V(X),E(X))$ the \emph{$1$--skeleton} of $X$, or the
\emph{underlying graph}. Conversely, for a graph $G$ one can derive a
triangle-square complex $X(G)$ by taking all vertices of $G$ as
$0$--cells, all edges of $G$ as $1$--cells, and all triangles and
squares of $G$ as $2$--cells of $X(G)$. Then $G$ is the $1$--skeleton
of $X(G)$. A triangle-square complex $X$ is a \emph{flag complex} if
the triangular and the square cells of $X$ are exactly the triangles
and the squares of its $1$--skeleton $G(X)$; a triangle-square flag
complex $X$ can therefore be recovered from its underlying graph
$G(X)$. The \emph{star} $\mbox{St}(v,X)$ of a vertex $v$ in a
triangle-square complex $\bX$ is the subcomplex consisting of the
union of all cells in $X$ containing $v$.

As morphisms between triangle-square complexes we consider all \emph{cellular maps}, i.e.,
maps sending (linearly) cells to cells. An \emph{isomorphism} is a bijective
cellular map being a linear isomorphism (isometry) on each cell. A
\emph{covering (map)} of a cell complex $X$ is a cellular surjection $p\colon
\widetilde{X} \to X$ such that $p|_{\mbox{St}(\tv,\widetilde{X})}$ is
an isomorphism onto its image for every vertex $\tv$ in $\tX$; compare \cite[Section 1.3]{Hat}.
The space $\widetilde{X}$ is then called a \emph{covering space}.
A \emph{universal cover} of $X$ is a simply connected covering space $\widetilde{X}$. It
is unique up to an isomorphism; cf.\ \cite[page 67]{Hat}. In particular, if $X$ is simply connected, then
its universal cover is $X$ itself. (Note that $X$ is connected iff $G(X)$ is connected, and $X$ is
\emph{simply connected} if every continuous map $S^1\to X$ is null-homotopic).

The following lemma, that is important in the proof of Theorem \ref{main_th} presented in
Section~\ref{s:main}, also provides an alternative proof of
Maurer's Theorem 5.1 from \cite{Mau}, establishing simple connectedness of basis complexes.

\begin{lemma} \label{l:simconn} Let ${X}$ be a
  triangle-square flag complex such that $G({X})$ satisfies the
  triangle and the square-pyramid conditions TC$(v)$ and SPC$(v)$, for some
  basepoint $v$. Then ${X}$ is simply connected.
\end{lemma}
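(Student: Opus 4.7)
The plan is to show that every edge-loop $\gamma$ in $G(X)$ is null-homotopic in $X$, which is enough since $X$ is a connected $2$-dimensional CW complex. I proceed by induction on $k = \max_{x \in \gamma} d(v, x)$: the base case $k = 0$ is trivial, so I assume $k \geq 1$ and that every loop lying in $B_{k-1}(v, G(X))$ is null-homotopic. The goal is to homotope $\gamma$ in $X$ to such a loop via two kinds of elementary moves, each strictly decreasing a transparent complexity measure.

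\emph{First reduction, based on TC$(v)$.} While $\gamma$ contains an edge $uw$ with $d(v, u) = d(v, w) = k$, TC$(v)$ (or, when $k = 1$, the vertex $v$ itself) provides $x$ adjacent to both $u$ and $w$ with $d(v, x) = k - 1$. Because $X$ is a flag triangle-square complex, the triangle $uwx$ is a $2$-cell of $X$, and homotoping the edge $uw$ through this triangle replaces it in $\gamma$ by the two-edge path $u, x, w$. Each such move strictly decreases the number of edges of $\gamma$ whose both endpoints lie at distance $k$ from $v$, and therefore terminates.

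\emph{Second reduction, based on SPC$(v)$ and $2$-cells.} After the first phase, every vertex $u$ of $\gamma$ with $d(v, u) = k$ has its two neighbors $w, w'$ on $\gamma$ at distance at most $k - 1$. If $w = w'$, the backtrack $w, u, w$ is removed directly; if $w \neq w'$ but $w \sim w'$, the flag triangle $wuw'$ homotopes $w, u, w'$ to $w, w'$; in both cases $u$ disappears from $\gamma$. Otherwise $d(w, w') = 2$, and SPC$(v)$ applied to $(u, w, w')$ yields either a vertex $x \sim w, w'$ with $d(v, x) = k - 2$, so that the induced $4$-cycle $uwxw'$ is a square $2$-cell of $X$, or a vertex $x \sim u, w, w'$ with $d(v, x) = k - 1$, so that the two flag triangles $uwx$ and $uxw'$ glue along the edge $ux$ into a disk with boundary cycle $w, u, w', x$. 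In either sub-case I homotope the subpath $w, u, w'$ to $w, x, w'$ through the corresponding filled disk, which strictly decreases the number of vertex-instances of $\gamma$ at distance $k$ from $v$. Iterating eliminates all such instances and produces a loop at maximum distance $\leq k - 1$, to which the inductive hypothesis applies.

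The only step carrying real content is matching the two alternatives of SPC$(v)$ to the square and pair-of-triangles $2$-cells furnished by the flagness of $X$; the non-adjacency $u \not\sim x$ in the first SPC sub-case follows automatically from the distance gap $|d(v,u) - d(v,x)| = 2$. Everything else is routine bookkeeping: verifying that each move decreases the chosen complexity and that the two phases compose into a genuine reduction of the maximum distance from $v$ along $\gamma$.
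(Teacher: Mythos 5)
Your proof is correct and takes essentially the same approach as the paper's: the paper's argument is phrased as a contradiction against a loop minimizing first the maximal distance $r$, then the number of edges on the $r$-sphere, then the number of vertices at distance $r$, which is exactly the lexicographic complexity measure driving your two reduction phases inside an induction on $k$; the three cases in the paper correspond precisely to your Phase 1 (TC), the $w \sim w'$ triangle shortcut, and the SPC step.
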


\begin{proof}
A \emph{loop} in ${X}$ is a sequence $(w_1,w_2,\ldots,w_k,w_1)$ of
vertices of ${X}$ consecutively joined by edges in $G(X)$. To prove the
lemma it is enough to show that every loop in ${X}$ can be freely
homotoped to a constant loop $v$.  By contradiction, let $A$ be the
set of loops in $G({X})$, which are not freely homotopic to $v$,
and assume that $A$ is non-empty. For a loop $\alpha\in A$ let $r(\alpha)$
denote the maximal distance $d(w,v)$ of a vertex $w$ of $\alpha$ from the
basepoint $v$.  Clearly $r(\alpha)\geq 2$ for any loop $\alpha\in A$ (otherwise
$\alpha$ would be null-homotopic). Let $B\subseteq A$ be the set of loops
$\alpha$ with minimal $r(\alpha)$ among loops in $A$. Let $r:= r(\alpha)$ for some
$\alpha\in B$. Let $C\subseteq B$ be the set of loops having minimal number
$e$ of edges in the $r$--sphere around $v$, i.e., with both endpoints at
distance $r$ from $v$. Further, let $D\subseteq C$ be the set of loops
with the minimal number $m$ of vertices at distance $r$ from $v$.

Consider a loop $\alpha=(w_1,w_2,\ldots,w_k,w_1)\in D$.
We  can assume without loss of generality that $d(w_2,v)=r$. We treat separately the three following cases.
\medskip

\noindent \emph{Case 1: $d(w_1,v)=r$ or $d(w_3,v)=r$.}  Assume without
loss of generality that $d(w_1,v)=r$. Then, by the triangle condition
$TC(v)$, there exists a vertex $w\sim w_1,w_2$ with $d(w,v)=r-1$.
Observe that the loop $\alpha'=(w_1,w,w_2,\ldots,w_k,w_1)$ belongs to $B$ --- in ${X}$ it is freely homotopic to $\alpha$ by a homotopy going through
the triangle $ww_1w_2$. The number of edges of $\alpha'$ lying on the
$r$--sphere around $v$ is less than $e$ (we removed the edge
$w_1w_2$). This contradicts the choice of the number $e$.
\medskip

\noindent \emph{Case 2: $d(w_1,v)=d(w_3,v)=r-1$ and $w_1\sim w_3$.} Then the loop $\alpha'=(w_1,w_3,\ldots,w_k,w_1)$ is homotopic to $\alpha$ via the triangle $w_1w_2w_3$. Thus $\alpha'$ belongs to $C$ and the number of its vertices at distance $r$ from $v$ is $m-1$. This contradicts the choice of the number $m$.

\medskip

\noindent \emph{Case 3: $d(w_1,v)=d(w_3,v)=r-1$ and $d(w_1,w_3)=2$.}
By the square-pyramid condition $SPC(v)$, there exists a vertex $w\sim w_1,w_3$ with $d(w,v) \leq r-1$.
Again, the loop $\alpha'=(w_1,w,w_3,\ldots,w_k,w_1)$ is freely homotopic to $\alpha$ (via the square $w_1w_2w_3w$, or the triangles $ww_1w_2$ and $ww_2w_3$). Thus $\alpha'$
belongs to $C$ and the number of its vertices at distance $r$ from $v$ is equal to $m-1$. This contradicts
the choice of the number $m$.
\medskip

In all cases above we get a contradiction. It follows that the set $A$ is empty and hence the lemma is proved.
\end{proof}

\subsection{A note on Maurer's characterizations}
\label{s:Maurer}

Now we formulate the main characterizations of basis graphs presented in Theorems 2.1 and 3.1 of \cite{Mau}. Both these results were proved in \cite{Mau} for finite graphs. However, the analysis of
the proof shows that one does not need to assume that the graphs are finite. Indeed, the result shows that if a graph $G$ satisfies Maurer's conditions, then $G$ is necessarily finite.

\begin{theorem} \cite[Theorems 2.1\&3.1]{Mau} \label{Th_Mau1} For a graph $G$ the following statements are equivalent:
\begin{enumerate}[(i)]
\item $G$ is the basis graph of  a matroid;
\item $G$ is connected, satisfies the interval and the positioning conditions, and some vertex  of $G$ satisfies the link condition (in particular, has finite degree);
\item $G$ is connected, satisfies the interval condition, does not contain propellers and half open books, and for some vertex $v$, the graph $G$ satisfies the link condition \mbox{LC}$(v)$ (in particular, $v$ has finite degree) and the positioning condition \mbox{PC}$(v)$.
\end{enumerate}
\end{theorem}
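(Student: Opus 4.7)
The plan is to reduce the theorem to Maurer's original arguments in \cite{Mau}, while verifying that the global finiteness hypothesis used there can be weakened to the finite-degree condition at a single vertex encoded in LC$(v)$. The equivalences (i) $\Leftrightarrow$ (ii) and (i) $\Leftrightarrow$ (iii) are Maurer's Theorems 2.1 and 3.1 respectively, and my task is twofold: to describe how one obtains each implication, and to argue that none of the steps actually relies on finiteness beyond what is furnished by LC$(v)$.

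For the direction (i) $\Rightarrow$ (ii), the introduction's discussion already records the key facts: a basis graph is connected by the exchange property, a $2$--interval has at most four common neighbors and is a square, pyramid, or octahedron, a counting argument on elementary exchanges yields the positioning condition, and Lemma 1.8 of \cite{Mau} gives that each link is the line graph of a finite bipartite graph. The implication (ii) $\Rightarrow$ (iii) is essentially immediate: PC$(v)$ and LC$(v)$ at a single vertex are special cases of the global assumptions in (ii), and since global positioning implies local positioning, Lemma \ref{l:noprop} rules out propellers and half-open books.

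The substantive direction is (iii) $\Rightarrow$ (i). I would follow Maurer's reconstruction scheme. Since LC$(v)$ holds, the link at $v$ is isomorphic to the line graph $L(H_v)$ of a finite bipartite graph $H_v$ with bipartition $A \sqcup B$; label each edge $vu$ by the corresponding pair in $A \times B$, interpreting $A$ as the elements of the base at $v$ and $B$ as the elements outside it. Propagate the labeling to all edges of $G$ along shortest paths from $v$, using the interval condition to know which $2$--cell a new edge sits in, PC$(v)$ to ensure that opposite edges of squares receive the same label, and the absence of propellers and half-open books to rule out the remaining local inconsistencies. The collection of cumulative labels obtained at each vertex defines a candidate basis on the ground set $I := A \sqcup B$, and the exchange property is then verified locally edge by edge. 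By construction the basis graph of this matroid contains $G$ as a connected subgraph; the interval and link conditions force equality.

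The main obstacle is the bookkeeping in (iii) $\Rightarrow$ (i), especially checking that the label propagation is path-independent and that the exchange property holds globally on the reconstructed family of bases. Crucially, once labels are assigned at $v$ the ground set $I$ has finite cardinality $|A|+|B|$ by LC$(v)$, so the basis graph of any matroid on $I$ has at most $\binom{|I|}{|A|}$ vertices, and connectedness of $G$ then forces $G$ itself to be finite. A careful reading of \cite{Mau} shows that global finiteness of $G$ is nowhere used beyond guaranteeing finiteness of the link at the chosen basepoint, so Maurer's arguments carry over to the present statement without modification.
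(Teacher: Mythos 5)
Your proposal takes essentially the same approach as the paper: both reduce the substantive implication (iii)~$\Rightarrow$~(i) to Maurer's label-propagation argument, observe that the only finiteness hypothesis actually invoked is the finiteness of the bipartite graph underlying the link at the basepoint~$v$, and conclude that since every vertex label is a subset of a fixed finite ground set of a fixed size, $G$ must itself be finite. The paper's proof is equally a pointer to Maurer with the same observation about where finiteness is used, so the two match in substance and in the key idea.
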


\begin{proof}
The implications $(i) \Rightarrow (ii) \Rightarrow (iii)$ are clear and are showed in \cite{Mau}.
For the implication $(iii) \Rightarrow (i)$, the main part of the proof of Theorems 2.1 and 3.1 of \cite{Mau} is to encode the vertices $x\in V$ of $G$ with pairwise distinct, equicardinal
sets (labels) $S_x$ such that two vertices $x$ and $y$ are adjacent in $G$ if and only if $|S_x\Delta S_y|=2$. The only place where the finiteness assumption
is used is the beginning of the encoding,  namely, to find an encoding for the basepoint $v$ and its neighbors. By the link condition, the link of $v$ is the line graph of a finite bipartite graph  $H=(B_0\cup B,E)$. Then, set $S_v:=B_0$ and for each vertex $x\in N(v),$ if $x$ corresponds to the
edge $b_0b$ of $H$ with $b_0\in B_0$ and $b\in B,$ then set
$S_x:=B_0 \setminus\{ b_0\}\cup \{ b\}$. The encoding is then propagated level-by-level to the whole graph $G$ using the interval condition, the
positioning condition PC$(v)$, and the fact that $G$ does not contain propellers and half open books. Each vertex $x$ of $G$ is encoded with a subset $S_x$ of $B_0\cup B$ of size $|B_0|$.
Since there exists only a finite number of such subsets, we conclude that $G$ is finite and is the basis graph of a matroid.
\end{proof}

\section{Proof of Theorem \ref{link}}
\label{s:link}
In this section we prove Theorem \ref{link}, which establishes Conjecture 1 of Maurer \cite{Mau}.  The proof is
a direct consequence of  Theorem~\ref{Th_Mau1} above and the following result.

\begin{theorem}\label{link1}
Let $G$ be a graph, and let $v$ be a vertex adjacent to finitely many vertices in $G$.
If $G$ satisfies the interval condition and $G$ does not contain
propellers and half open books, then $G$ satisfies the link condition LC$(v)$ at vertex $v$.
\end{theorem}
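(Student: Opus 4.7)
The plan is to show that the link $L := G[N(v)]$ — a finite graph, since $v$ has finite degree — is the line graph of a finite bipartite graph $H_0$, which is precisely the statement of LC$(v)$.

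First, $L$ is claw-free: a $K_{1,3}$ in $L$ with center $u$ and independent leaves $x,y,z$ would, together with $v$, form a propeller in $G$ with shaft $uv$ and tips $x,y,z$, contradicting the hypothesis. Next, for each vertex $x$ of $L$ I define on the edges of $L$ incident to $x$ the relation $xy\equiv_x xz$ iff $y=z$ or $y\sim z$ in $L$. Transitivity is the key point: if $y\sim z$ and $z\sim w$ in $L$ but $y\nsim w$, then $d(y,w)=2$ and $\{v,x,z\}\subseteq N(y)\cap N(w)$ forms a triangle of $G$; however, by the interval condition $I(y,w)$ must be a square, pyramid, or octahedron, whose middle $N(y)\cap N(w)$ is respectively $\overline{K_2}$, $K_{1,2}$, or $C_4$ — each triangle-free, a contradiction. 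Hence $\equiv_x$ is an equivalence relation, each class is a clique of $L$ through $x$, and by claw-freeness there are at most two classes per vertex. The class of $xy$ viewed at $x$ and at $y$ determine the same clique $\{x,y\}\cup(N(x)\cap N(y)\cap N(v))$ of $L$, so the equivalences patch into a global partition $\mathcal P$ of $E(L)$ into cliques with every vertex of $L$ in at most two. Augmenting $\mathcal P$ by a pendant vertex for each $L$-vertex in only one clique, the standard Krausz construction yields $H_0$ with $L=L(H_0)$.

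The main step remaining is to prove $H_0$ bipartite, i.e., that its underlying simple graph has no odd cycle. A triangle in $H_0$ forces three distinct cliques $K_1,K_2,K_3\in\mathcal P$ with pairwise shared $L$-vertices $x_1,x_2,x_3$; these form a triangle in $L$ whose three edges belong to a single clique of $\mathcal P$, collapsing $K_1=K_2=K_3$ — contradiction. A shortest odd cycle of length $2k+1\geq 5$ in $H_0$ produces an induced odd cycle of length $\geq 5$ in $L$, so it suffices to rule these out. For the minimal case $C_5=x_1x_2x_3x_4x_5$ in $L$, applying the interval condition to each diagonal $I(x_i,x_{i+2})$ — whose middle contains the adjacent pair $v,x_{i+1}$ — rules out the square; a pyramid/octahedron analysis using the transitivity of $\equiv_{x_i}$ (which excludes a third common neighbor lying in $N(v)$) forces a vertex $u_i\notin N(v)$ with $u_i\sim x_i,x_{i+1},x_{i+2}$. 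The interval condition on $I(v,u_i)$ then confines $u_i$'s neighbors in $N(v)$ to exactly $\{x_i,x_{i+1},x_{i+2}\}$, so the five $u_i$'s cannot all coincide. Picking two distinct ones, say $u:=u_{13}$ and $u':=u_{41}$, further interval-condition analysis forces $u\nsim u'$. Then the six vertices $\{v,u,u',x_1,x_3,x_4\}$ induce a half open book in $G$: the triangle $vx_3x_4$ together with the induced squares $ux_1vx_3$ and $x_1u'x_4v$ sharing the edge $vx_1$, with all seven required non-edges verified from the adjacencies above. This contradicts the hypothesis. Longer induced odd cycles in $L$ reduce to the $C_5$ case by a shortcutting argument using $\equiv_x$.

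The principal obstacle is this last step — tracking the auxiliary vertices produced by the interval condition around the putative $C_5$ in $L$, and verifying that they assemble exactly into a half open book in $G$. Once $H_0$ is shown to be bipartite and finite, $L=L(H_0)$ realizes the link of $v$ as the line graph of a finite bipartite graph, which is precisely LC$(v)$.
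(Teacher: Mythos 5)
Your proposal replaces the paper's forbidden-subgraph route (line graph of a bipartite graph $\iff$ no claw, no diamond, no odd hole of length $\ge 5$) with an explicit Krausz decomposition, but the underlying content is largely the same: your claw-freeness is exactly the paper's propeller argument; your transitivity of $\equiv_x$ is precisely the paper's diamond-exclusion (if $y\nsim w$ and $x,z$ are common neighbors of $y,w$ with $x\sim z$, then $v,x,z$ form a triangle inside $I(y,w)$, contradicting the interval condition). So the Krausz packaging is a cosmetic reorganization of the first two forbidden-subgraph checks, and the real work in both proofs is to rule out induced odd cycles $C_{2k+1}$, $k\ge 2$, in the link $L$ — equivalently, odd wheels $W_{2k+1}$ through $v$.

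For the base case $C_5$ your argument is genuinely different from the paper's Lemma~\ref{lem-no-W5} and, as far as I can tell, sound once fleshed out. One gets pyramid apices $u_i\sim x_i,x_{i+1},x_{i+2}$ with $u_i\nsim v$: a second common neighbour of $x_i,x_{i+2}$ inside $N(v)$ is excluded because transitivity of $\equiv_{x_i}$ and $\equiv_{x_{i+2}}$ would force it adjacent to four of the five cycle vertices and then $I(x_{i+2},x_{i+4})$ would contain the triangle $v,x_{i+3},z$; the octahedron case of $I(x_i,x_{i+2})$ is excluded for the same reason. Then $I(v,u_i)$ is a pyramid with exactly the middle $\{x_i,x_{i+1},x_{i+2}\}$, so the $u_i$ are distinct. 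The step ``$u\nsim u'$'' is not just interval analysis though: you first need $u_i\sim u_{i+1}$ (otherwise $x_{i+1}x_{i+2}$ is the shaft of a propeller with tips $v,u_i,u_{i+1}$), and only then does the interval $I(u_1,x_5)$ contain the triangle $x_1,u_4,u_5$, giving the contradiction. With that added, $\{v,u_1,u_4,x_1,x_3,x_4\}$ is indeed an induced half open book, as you claim.

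The real gap is the last sentence: ``Longer induced odd cycles in $L$ reduce to the $C_5$ case by a shortcutting argument using $\equiv_x$.'' This step carries the bulk of the difficulty, and I do not see how to do it by a simple $\equiv_x$-shortcut. Your $C_5$ argument relies on the fact that, for $C_5$, the pyramid apex $u_i$ of $I(x_i,x_{i+2})$ cannot lie in $N(v)$. For $C_{2k+1}$ with $k\ge 3$, that dichotomy splits: if some $u_i\in N(v)$ you do get a shorter odd hole in $L$, but if all $u_i\notin N(v)$ there is no obvious shortcut inside $L$, and the half-open-book configuration you use for $C_5$ simply does not exist (for $k\ge 3$ no $u_j$ is adjacent to both $x_1$ and $x_4$, so the vertex playing the role of $u'$ is missing). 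The paper's corresponding Lemmas~\ref{lem-defx}--\ref{lem-contradiction} do not shortcut inside the link of $v$ at all: they build a smaller odd wheel $W_{2k-1}$ around a \emph{different} hub $v_k$ using the external vertices $x_{i,j}\nsim v$, and Claim~\ref{claim_wheel} itself already invokes minimality of $k$ over all odd wheels in $G$, not just those through $v$. So the induction is on odd wheels anywhere in $G$, which your argument — confined to $L$ — cannot reproduce as stated. This reduction is the heart of the proof (roughly two pages in the paper) and needs to be supplied; as written, your proposal proves only that $L$ has no claw, no diamond, and no induced $C_5$, which is not sufficient (e.g.\ $C_7$ is claw-free, diamond-free, $C_5$-free, and equal to $L(C_7)$ with $C_7$ non-bipartite).
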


\begin{proof}
Let  $G'$ be the link of $v$ in $G$. It is well known that $G'$ is the line graph of a bipartite
graph if and only if $G'$ does not contain induced claws $K_{1,3}$, diamonds
$K_4 - e$, and odd induced cycles $C_{2k+1}, k\ge 2$.

If $G'$ contains a claw, then this $K_{1,3}$ together with $v$ induces in $G$
a propeller. Analogously, if $G'$ contains a diamond
with vertices $a,b,c,d$ such that $c\nsim d,$ then in $G$ the interval $I(c,d)$
contains a triangle $abv$, which is impossible by the
interval condition. If $G'$ contains an induced odd cycle, then this cycle together with $v$ induces in
$G$ an odd wheel $W_{2k+1}$ with $k \geq 2$, which is impossible by the next Proposition~\ref{prop-no-odd-wheel}.
\end{proof}

\begin{proposition}\label{prop-no-odd-wheel}
If $G$ satisfies the interval condition and  $G$ does not contain
propellers and half open books, then $G$ does not contain any odd wheel
$W_{2k+1},$ $k \geq 2$.
\end{proposition}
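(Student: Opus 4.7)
The plan is to argue by contradiction. Suppose $G$ contains an induced $W_{2k+1}$ with $k\ge 2$, center $u$, and rim $v_1,\dots,v_{2k+1}$ (indices mod $2k+1$). I start from the $2$-interval $I(v_1,v_3)$: its common neighbours contain the adjacent pair $\{u,v_2\}$, so by the interval condition the interval must be a pyramid or an octahedron, and there is a further common neighbour $w$, which I classify by its adjacency to $u$.

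First I would rule out the case $w\sim u$. Here $w\sim u,v_1,v_3$ and $w\nsim v_2$. Applying the no-propeller hypothesis to the edges $uv_1$ and $uv_3$, whose candidate tips $\{v_2,v_{2k+1},w\}$ and $\{v_2,v_4,w\}$ would otherwise be pairwise non-adjacent, forces $w\sim v_{2k+1}$ and $w\sim v_4$. Then $v_1,v_3,v_4,v_{2k+1}$ are common neighbours of $(u,w)$, so $I(u,w)$ would have to be an octahedron; but the induced subgraph on these four vertices has at most three edges (always $v_1v_{2k+1}$ and $v_3v_4$, and only when $k=2$ also $v_4v_5$), so they never form a $4$-cycle. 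The same argument dispatches an octahedral $I(v_1,v_3)$, since any such interval necessarily contains a vertex of the type just excluded.

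It follows that $w\nsim u$ and $w\sim v_2$; write $w_1:=w$. The same ``$I(u,w_1)$ has too many common neighbours'' test will show that $w_1$ is adjacent to no rim vertex besides $v_1,v_2,v_3$ (else the four-element common-neighbour set of $(u,w_1)$ again fails to be a $4$-cycle). Rotating around the wheel produces distinct vertices $w_1,\dots,w_{2k+1}$ with $w_i\sim v_i,v_{i+1},v_{i+2}$ and no other adjacencies to the rim or to $u$. Now the no-propeller hypothesis applied to each edge $v_iv_{i+1}$, whose common neighbours include the pairwise non-adjacent $u,w_{i-1},w_i$, forces $w_{i-1}\sim w_i$, so the $w_i$ form a cycle of length $2k+1$.

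To conclude I split on whether the diagonals $w_iw_{i+2}$ are edges. If $w_1\sim w_3$, then $\{v_2,w_1,w_2,w_3\}$ sits in the link of $v_3$ as an induced $K_4-e$; translating back to $G$, the non-adjacent pair $(v_2,w_3)$ admits the three mutually adjacent common neighbours $v_3,w_1,w_2$, contradicting the interval condition. Otherwise every $w_iw_{i+2}$ is a non-edge; then for $k=2$ the six vertices $u,v_1,v_3,v_5,w_1,w_3$ induce exactly a half-open book, with triangle $uv_1v_5$ and the two induced squares $uv_1w_1v_3$ and $uv_3w_3v_5$ sharing the edge $uv_3$, contradicting the no-half-open-book hypothesis. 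For $k\ge 3$ an analogous half-open book must be exhibited using the additional common neighbours that the interval condition forces on pairs $(v_i,v_j)$ at cyclic distance $\ge 3$ (the six vertices $\{u,v_1,v_3,v_5,w_1,w_3\}$ no longer contain the required triangle $uv_1v_5$). Carrying out this bookkeeping uniformly in $k$, while keeping track of all non-adjacencies forced along the way, is in my view the main obstacle of the proof.
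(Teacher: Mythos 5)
Your plan correctly establishes the existence of the vertices $w_i$ (not adjacent to the center, adjacent exactly to $v_i,v_{i+1},v_{i+2}$ among the rim and to $w_{i\pm 1}$ among themselves), and the diamond argument ruling out $w_i\sim w_{i+2}$ is correct. The identification of the half open book on $\{u,v_1,v_3,v_5,w_1,w_3\}$ when $k=2$ is also correct. So the case $k=2$ is complete, and this portion of your argument is a legitimate alternative to the paper's Lemma~\ref{lem-no-W5}, which instead shows directly that $x_{0,2},x_{1,3},x_{2,4}$ form a triangle inside $I(v_2,x_{1,4})$.

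However the proof as written has a genuine gap for $k\ge 3$, and you flag it yourself: after establishing that the $w_i$ form a $(2k+1)$-cycle with $w_i\nsim w_{i+2}$, you do not actually exhibit a propeller or half open book, and it is not clear that one will materialize directly. The paper avoids this difficulty by a fundamentally different strategy: it takes the \emph{smallest} $k$ with an induced $W_{2k+1}$ and constructs a \emph{smaller odd wheel}, not a half open book. Concretely, it introduces a richer family of auxiliary vertices $x_{i,j}$ (a common neighbour of $v_i,v_j$ off the wheel and off the center) for \emph{every} non-adjacent rim pair $v_i\nsim v_j$, not just consecutive-distance-two pairs as your $w_i$ do; it proves the adjacency relations $x_{i,j}\sim x_{i,m}\Leftrightarrow v_j\sim v_m$ and $x_{i-1,i+1}\sim x_{i,j}\Leftrightarrow j\in\{i-2,i+2\}$ (Lemmas~\ref{lem-x1ix1j} and \ref{lem-wi}); and it then assembles the vertices $x_{0,k},\ldots,x_{k-2,k},x_{k-1,k+1},x_{k,k+2},\ldots,x_{k,2k}$ into an induced $(2k-1)$-cycle around $v_k$ (Lemma~\ref{lem-contradiction}), contradicting minimality, with $k=2$ handled as the separate base case. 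Your $w_i$ are exactly the $x_{i,i+2}$; the missing ingredient is the vertices $x_{i,j}$ with $|i-j|>2$, which are what make the inductive step go through. Without either the induction or a concrete exhibition of a forbidden configuration, the case $k\ge 3$ remains open in your argument.
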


In the rest of this section, we will prove
Proposition~\ref{prop-no-odd-wheel}.  Consider the smallest $k \geq 2$
such that $G$ contains an induced odd wheel $W_{2k+1}$. Let $c$ be the center of
the wheel, and let $v_0,\ldots, v_{2k}$ be the vertices of the cycle
of the wheel such that for every $i$, $v_i \sim v_{i+1}$ (here and in the rest of this section all additions are performed modulo $2k+1$).

\begin{lemma}\label{lem-defx}
For every $i,j$ such that $v_i \nsim v_j$, there exists a unique
$x_{i,j} \sim v_i,v_j$ such that $x_{i,j} \notin \{c,v_0,
\ldots,v_{2k}\}$. Moreover, the following properties are satisfied:
\begin{enumerate}
\item $x_{i,j} \nsim c$;
\item $x_{i,j} \sim v_k$ with $v_k \notin \{v_i, v_j\}$ if and only if $v_k
  \sim v_i, v_j$.
\end{enumerate}
\end{lemma}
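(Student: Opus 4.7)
The plan is to apply the interval condition to the $2$-interval $I(v_i,v_j)$ --- well defined because $v_i\nsim v_j$ and both are adjacent to $c$, so $d(v_i,v_j)=2$ --- and then exploit the rigid structure of the odd wheel together with the no-propeller and no-half-open-book hypotheses. First, I would record the ``old'' common neighbors of $v_i,v_j$ lying in $\{c,v_0,\ldots,v_{2k}\}$: the center $c$ is always one, and a rim vertex $v_\ell$ (with $\ell\ne i,j$) is one precisely when $j\equiv i\pm 2\pmod{2k+1}$, in which case $v_\ell=v_{(i+j)/2}$ is unique (this follows from the rim being an induced $(2k+1)$-cycle). By the interval condition, $I(v_i,v_j)\setminus\{v_i,v_j\}$ must consist of $2$, $3$, or $4$ vertices inducing $\overline{K_2}$, $P_3$, or $C_4$ respectively. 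Existence of at least one ``new'' common neighbor $x_{i,j}$ is then immediate: when $j=i\pm 2$, the old pair $c,v_{(i+j)/2}$ is adjacent, so the $\overline{K_2}$ configuration is excluded; when the rim distance is $\geq 3$, $c$ is the only old common neighbor, and the interval condition supplies more.

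The heart of the proof is property~(1), $x_{i,j}\nsim c$. I would argue by contradiction: suppose some common neighbor $x$ of $v_i,v_j$ satisfies $x\sim c$. If the rim distance $d$ between $v_i$ and $v_j$ is $\geq 3$, then of the two cycles obtained by replacing a rim arc between $v_i$ and $v_j$ with the length-$2$ path $v_ixv_j$, exactly one has odd length, and that length is strictly less than $2k+1$. The no-propeller condition applied to shafts $cv_\ell$ (for $v_\ell$ an intermediate rim vertex on that arc) rules out $x\sim v_\ell$ --- else $v_{\ell-1},v_{\ell+1},x$ would be three pairwise non-adjacent tips --- so the odd cycle is induced; since all its vertices are adjacent to $c$, together with $c$ it produces an induced odd wheel smaller than $W_{2k+1}$, contradicting minimality. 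When $d=2$, say $j=i+2$, a quick use of the interval condition on $I(v_i,v_j)$ first rules out $x\sim v_{i+1}$ (else $c,v_{i+1},x$ form a forbidden $K_3$ among the common neighbors), after which the no-propeller condition on shafts $cv_i$ and $cv_{i+2}$ forces $x\sim v_{i-1}$ and $x\sim v_{i+3}$. Iterating along the rim either produces a strictly smaller induced odd wheel (when $k\geq 3$) or, in the base case $k=2$, a direct interval-condition violation: $I(v_i,v_{i+3})$ ends up containing the triangle $\{c,v_{i-1},x\}$ among its common neighbors, incompatible with the $\overline{K_2}/P_3/C_4$ structure.

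Uniqueness of $x_{i,j}$ and property~(2) are obtained by the same kind of case analysis. A putative second new common neighbor $x'$ would have to fit alongside $c$ (and possibly $v_{(i+j)/2}$) into the forced $P_3$ or $C_4$ structure of $I(v_i,v_j)$; but $x,x'\nsim c$ (from property~(1)) together with the forbidden-subgraph hypotheses rules this out. For property~(2), the ``if'' direction is read off directly from the forced shape of $I(v_i,v_j)$: when the common rim neighbor $v_{(i+j)/2}$ exists, the $P_3$/$C_4$ shape on the common neighbors forces $x_{i,j}\sim v_{(i+j)/2}$. The ``only if'' direction, i.e.\ that $x_{i,j}$ has no spurious rim neighbors, is obtained by re-running the shaft and $2$-interval arguments on $I(v_i,v_\ell)$ whenever $x_{i,j}\sim v_\ell$ with $v_\ell$ not a common neighbor of $v_i,v_j$.

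The main obstacle is property~(1) in the case $d=2$: the naive ``shorter odd cycle through $x$'' argument produces only a length-$4$ cycle (even, and so useless), so one must bootstrap the adjacencies of $x$ to successive rim vertices via the no-propeller condition until a smaller induced odd wheel emerges. The base case $k=2$ is especially delicate, since the only candidate smaller odd wheel would be $W_3=K_4$, which the hypothesis does not forbid, and the contradiction must instead come from a direct interval-condition violation.
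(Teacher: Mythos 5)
Your overall strategy --- interval condition for existence, propellers and minimality of the odd wheel for property (1), interval structure for uniqueness and (2) --- matches the paper, but there is a genuine gap in the heart of the argument (property (1)). For rim distance $d\geq 3$ you invoke the no-propeller hypothesis with \emph{shaft} $cv_\ell$ and tips $v_{\ell-1},v_{\ell+1},x$ to conclude $x\nsim v_\ell$; but this propeller requires $x\nsim v_{\ell-1}$ and $x\nsim v_{\ell+1}$, which is exactly the kind of non-adjacency you are trying to establish --- the argument is circular as written. The paper avoids this by using the shaft $cx$ with tips $v_i,v_j,v_\ell$: since $v_i\nsim v_j$ unconditionally, the no-propeller hypothesis immediately forces any rim neighbor $v_\ell$ of $x$ (other than $v_i,v_j$) to be a rim neighbor of $v_i$ or of $v_j$, so $x$'s rim neighborhood is pinned down to at most $\{v_{i-1},v_{i+1},v_{j-1},v_{j+1}\}$ before any cycle is built. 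That is the missing ingredient.

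The same issue resurfaces in your $d=2$ case: the initial steps (ruling out $x\sim v_{i+1}$ by the interval condition, then shafts $cv_i$ and $cv_{i+2}$) are fine because $x\nsim v_{i+1}$ is already known, but ``iterating along the rim'' does not have a working mechanism with $cv_\ell$-shafts --- once $x\sim v_{i-1}$ and $x\sim v_{i+3}$ are established, the next shafts no longer have three pairwise non-adjacent tips. Again the $cx$-shaft is what controls the rest of $x$'s rim neighborhood and makes the smaller cycle induced. Your treatment of the $k=2$ base case via the interval $I(v_i,v_{i+3})$ containing the triangle $c,v_{i-1},x$ is correct and is in fact the same device the paper uses for its $j=3$ case; the uniqueness and property (2) arguments are also sound in outline. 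So the fix is local: replace the $cv_\ell$-shaft propellers with the $cx$-shaft propeller to first constrain $x$'s rim neighbors, then form the two cycles.
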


\begin{proof}
By symmetry, we can assume that $i = 1$ and  $3 \leq j \leq k+1$.
If $j = 3$, by the interval condition there exists $x \notin \{c, v_2\}$
such that $x \sim v_1,v_3$.   If $j \geq 4$, by the interval condition
between $v_1$ and $v_j$, there exists $x\sim v_1,v_j$ with $x \neq c$. In both cases,
$x \notin \{c,v_0,\ldots,v_{2k}\}$. We first show that $x \nsim c$.

\begin{claim} \label{claim_wheel}
$x \nsim c$.
\end{claim}

\begin{proof}
Suppose that there exists $x \sim v_1,v_j,c$.  Let $x
\sim v_m$ for some $m \notin \{1,j\}$. Consider the three triangles $cxv_1,
cxv_j$, and $cxv_m$, all three sharing the common edge $cx$. Since $G$
does not contain propellers, $v_m$ is a neighbor of $v_1$ or
$v_j$. Note that if $v_m \sim v_1,v_j$, then $m=2$ and $j=3$, but
then the interval $I(v_1,v_3)$ contains a triangle
$cxv_2$. Consequently, either $v_m \sim v_1$ and $v_m \nsim v_j$, or
$v_m \sim v_j$ and $v_m \nsim v_1$.

Consider the  triangles $cv_1x$, $cv_1v_2$, and $cv_1v_0$. Since
$G$ has no propellers, either $x \sim v_2$, or $x \sim v_0$. By
the previous remark, $x$ cannot be adjacent to both $v_0$ and
$v_2$. Up to renaming the vertices, we can assume that $x \sim
  v_2$. For the same reasons, we can assume that $x \sim v_{j+1}$.

Consequently, $x \sim c, v_1, v_2, v_j, v_{j+1}$ and $x$ is not
adjacent to any other vertex of the wheel. Thus $c$ and the cycle
$v_2v_3\ldots v_jx$ form the wheel $W_j$, while $c$ and the cycle
$v_{j+1}\ldots v_{2k}v_0v_1x$ form the wheel
$W_{2k+3-j}$. Since $j$ or $2k+3-j$ is odd and strictly smaller
than $2k+1$, we get a contradiction with the choice of $k$,
except if $j=3$. In the latter case, the
interval $I(v_1,v_3)$ contains a triangle $cxv_2$, a
contradiction. This establishes Claim \ref{claim_wheel}.
\end{proof}

Hence, if $x \sim v_1, v_j$, then $x\nsim c$. Then the interval
condition for $v_1$ and $v_j$ ensures that $x$ is unique. Suppose now
that $x \sim v_m$ for some $m \notin \{1,j\}$. By the interval
condition between $c$ and $x$, and since $v_1 \nsim v_j$, we get that
$v_m \sim v_1,v_j$, i.e., $m=2$ and $j=3$ (since we assumed that  $3 \leq j
\leq k+1$). Conversely, assume that $v_m \sim v_1,v_j$, i.e., $m=2$
and $j=3$. Then $c,x,v_2$ belong to the interval $I(v_1,v_3)$ and, by the interval condition,
$x \sim v_2$ since $x \nsim
c$. This finishes the proof of the lemma.
\end{proof}

In the following, for any $v_i \nsim v_j$, let $x_{i,j}$ be the
unique vertex $x_{i,j} \sim v_i,v_j$ such that $x_{i,j} \notin
\{c,v_0, \ldots,v_{2k}\}$. From Lemma~\ref{lem-defx}(2), we know
that for every $i,j,i',j'$ such that $\{i,j\}\neq \{i',j'\}$, we have $x_{i,j}
\neq x_{i',j'}$.

\begin{lemma}\label{lem-x1ix1j}
For any $v_i, v_j, v_m$, we have $x_{i,j} \sim x_{i,m}$ if and only if $v_j \sim
v_m$.
\end{lemma}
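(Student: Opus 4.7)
My plan is to analyze the induced subgraph on the six vertices $S=\{v_i,v_j,v_m,c,x_{i,j},x_{i,m}\}$ and show that in each of the two mismatched cases---exactly one of $v_jv_m$ and $x_{i,j}x_{i,m}$ being an edge---the induced subgraph $G[S]$ is a half open book, contradicting the hypothesis of Proposition~\ref{prop-no-odd-wheel}. It then follows that $v_jv_m$ and $x_{i,j}x_{i,m}$ are edges of $G$ simultaneously or non-edges simultaneously, which is exactly the claim of the lemma.

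First I would observe that the six vertices of $S$ are pairwise distinct: $c,v_i,v_j,v_m$ are distinct wheel vertices, and $x_{i,j},x_{i,m}\notin\{c,v_0,\dots,v_{2k}\}$ by Lemma~\ref{lem-defx}, while $x_{i,j}\ne x_{i,m}$ by the uniqueness remark following Lemma~\ref{lem-defx}. Next I would tabulate the adjacencies inside $S$ that are forced by the construction. Seven pairs are edges of $G$, namely the three wheel edges $cv_i$, $cv_j$, $cv_m$ together with the four edges $v_ix_{i,j}$, $v_jx_{i,j}$, $v_ix_{i,m}$, $v_mx_{i,m}$ coming from the definitions of $x_{i,j}$ and $x_{i,m}$. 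Six pairs are non-edges of $G$: the pairs $v_iv_j$ and $v_iv_m$ (since $v_j,v_m\nsim v_i$ by our standing assumption), the pairs $cx_{i,j}$ and $cx_{i,m}$ (by Lemma~\ref{lem-defx}(1)), and the pairs $v_mx_{i,j}$ and $v_jx_{i,m}$ (by Lemma~\ref{lem-defx}(2), applied with $v_m\nsim v_i$ and $v_j\nsim v_i$ respectively). Only the two pairs $v_jv_m$ and $x_{i,j}x_{i,m}$ are left undetermined.

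Since a half open book has exactly eight edges, each of the two mismatched cases produces an induced subgraph on $S$ with the correct edge count. In the case $v_j\nsim v_m$ and $x_{i,j}\sim x_{i,m}$ I would exhibit the isomorphism sending the vertices $u,v,w,x,y,z$ of the half open book (in the notation of Figure~\ref{fig-propeller-halfbook}) to
\[
u=v_j,\ v=c,\ w=v_m,\ x=x_{i,j},\ y=x_{i,m},\ z=v_i,
\]
and check that the required eight edges $uv,ux,vw,vz,wy,xy,xz,yz$ and seven non-edges $uw,uz,uy,vx,vy,wx,wz$ coincide with what has already been tabulated for $G[S]$. In the dual case $v_j\sim v_m$ and $x_{i,j}\nsim x_{i,m}$ I would use the labeling
\[
u=x_{i,j},\ v=v_i,\ w=x_{i,m},\ x=v_j,\ y=v_m,\ z=c,
\]
and perform the analogous verification. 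In both cases $G[S]$ is an induced half open book, a contradiction.

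Therefore neither mismatched case can occur, so $v_j\sim v_m$ if and only if $x_{i,j}\sim x_{i,m}$, proving the lemma. The argument is essentially the bookkeeping of fourteen (non-)adjacencies, each immediate from the wheel structure and Lemma~\ref{lem-defx}, so I do not anticipate any real obstacle; the only idea beyond the verification is noticing that the same six vertices realize a half open book under either ``half'' of the biconditional.
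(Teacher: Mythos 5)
Your proof is correct and follows the same approach as the paper: identifying the two squares $cv_ix_{i,j}v_j$ and $cv_ix_{i,m}v_m$ sharing the edge $cv_i$, establishing all non-adjacencies from Lemma~\ref{lem-defx}, and observing that either mismatch between $v_jv_m$ and $x_{i,j}x_{i,m}$ would produce an induced half open book on $\{c,v_i,v_j,v_m,x_{i,j},x_{i,m}\}$. You merely spell out the two explicit isomorphisms that the paper leaves implicit.
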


\begin{proof}
Note that since $x_{i,j}, x_{i,m}$ are defined, $v_i \nsim
v_j,v_m$.
Consider the subgraph of $G$ induced by $c,v_i,v_j,v_m,x_{i,j},x_{i,m}$. Observe that
$cv_ix_{i,j}v_j$ and $cv_ix_{i,m}v_m$ are squares, since $v_i \nsim
v_j,v_m$, and from Lemma~\ref{lem-defx} we have $c \nsim x_{i,j}, x_{i,m}$,
$v_j \nsim x_{i,m}$, and $v_m \nsim x_{i,j}$. Since $G$ does not
contain half open books, $v_j \sim v_m$ if and only
if $x_{i,j} \sim x_{i,m}$.
\end{proof}

\begin{lemma}\label{lem-wi}
For any $v_i, v_j$, $x_{i-1,i+1} \sim x_{i,j}$ if and only if $j =
i-2$, or $j = i+2$.
\end{lemma}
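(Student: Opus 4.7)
The plan is to treat both directions of the equivalence by exhibiting three triangles sharing a common edge (a potential propeller) and then invoking the absence of propellers in $G$. The preliminary tool I will use is Lemma~\ref{lem-defx}(2): for any defined $x_{p,q}$ and any rim vertex $v_m\notin\{v_p,v_q\}$, one has $x_{p,q}\sim v_m$ if and only if $v_m\sim v_p,v_q$. Applied to $x_{i-1,i+1}$, which is defined since the rim cycle $v_0v_1\cdots v_{2k}$ is induced of length at least~$5$, this gives $x_{i-1,i+1}\sim v_i$, because $v_i$ is the unique rim vertex adjacent to both $v_{i-1}$ and $v_{i+1}$.

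For the forward direction, by symmetry I treat only $j=i+2$. Lemma~\ref{lem-defx}(2) gives $x_{i,i+2}\sim v_{i+1}$, so the vertices $c$, $x_{i-1,i+1}$, and $x_{i,i+2}$ are three common neighbors of $v_i$ and $v_{i+1}$, producing three triangles sharing the shaft $v_iv_{i+1}$. Since $c\nsim x_{i-1,i+1}$ and $c\nsim x_{i,i+2}$ by Lemma~\ref{lem-defx}(1), the absence of propellers in $G$ forces $x_{i-1,i+1}\sim x_{i,i+2}$.

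For the converse, the only non-trivial case is $j\notin\{i-2,i-1,i,i+1,i+2\}$, since the values $i-1,i,i+1$ make $x_{i,j}$ undefined. Assume for contradiction that $x_{i-1,i+1}\sim x_{i,j}$. Then $v_{i-1}$, $v_{i+1}$, and $x_{i,j}$ are all adjacent to both endpoints of the edge $v_ix_{i-1,i+1}$: the first two by the definition of $x_{i-1,i+1}$ together with $v_i\sim v_{i-1},v_{i+1}$, and the third by the definition of $x_{i,j}$ together with the assumed adjacency. The three tips are pairwise non-adjacent: $v_{i-1}\nsim v_{i+1}$ because the rim is an induced cycle of length at least~$5$, and $x_{i,j}\nsim v_{i\pm1}$ by the preliminary observation applied with $j\ne i\pm2$. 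This exhibits a propeller with shaft $v_ix_{i-1,i+1}$, contradicting the hypothesis. The main obstacle is simply identifying the right shaft: picking $v_ix_{i-1,i+1}$ rather than a rim edge is the nontrivial move; once chosen, the remainder is mechanical.
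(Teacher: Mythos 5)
Your proof is correct and follows essentially the same route as the paper: both directions rest on the no-propeller property, with the forward direction using the shaft $v_iv_{i+1}$ and tips $c$, $x_{i-1,i+1}$, $x_{i,i+2}$, and the converse using the shaft $v_ix_{i-1,i+1}$ and tips $v_{i-1}$, $v_{i+1}$, $x_{i,j}$, together with Lemma~\ref{lem-defx}. The only cosmetic difference is that you phrase the converse as a direct contradiction (three pairwise non-adjacent tips would form a propeller) while the paper reasons forward (no-propeller forces $x_{i,j}\sim v_{i-1}$ or $v_{i+1}$, and Lemma~\ref{lem-defx} then pins down $j$).
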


\begin{proof}
By symmetry, we can assume that $i = 1$ and $2 \leq j \leq
k+1$. Note that since $x_{i,j}$ exists, $j \geq 3$. Recall that
$x_{0,2} \sim v_{0},v_1,v_{2}$. First assume that $j =3$; recall that $x_{1,3} \sim
v_1,v_{2},v_3$. Consider the  triangles $v_1v_2c$,
$v_1v_2x_{0,2}$, $v_1v_2x_{1,3}$, all three sharing the common edge
$v_1v_2$. Since $G$ does not contain propellers and $c \nsim
x_{0,2},x_{1,3}$, we get that $x_{0,2} \sim x_{1,3}$.

Suppose now that there exists an index $j$ such that $x_{1,j} \sim
x_{0,2}$. Consider the triangles $v_1x_{0,2}v_0$,
$v_1x_{0,2}v_2$, $v_1x_{0,2}x_{1,j}$, all three sharing the common edge
$v_1x_{0,2}$. Since $v_0 \nsim v_2$ and $G$ does not contain
propellers, either $x_{1,j} \sim v_0$ or $x_{1,j} \sim v_2$. Since $3
\leq j \leq k+1$, by Lemma~\ref{lem-defx}, the only possibility is
$j=3$.
\end{proof}

\begin{figure}[t]
\begin{center}
\scalebox{0.7}{\includegraphics{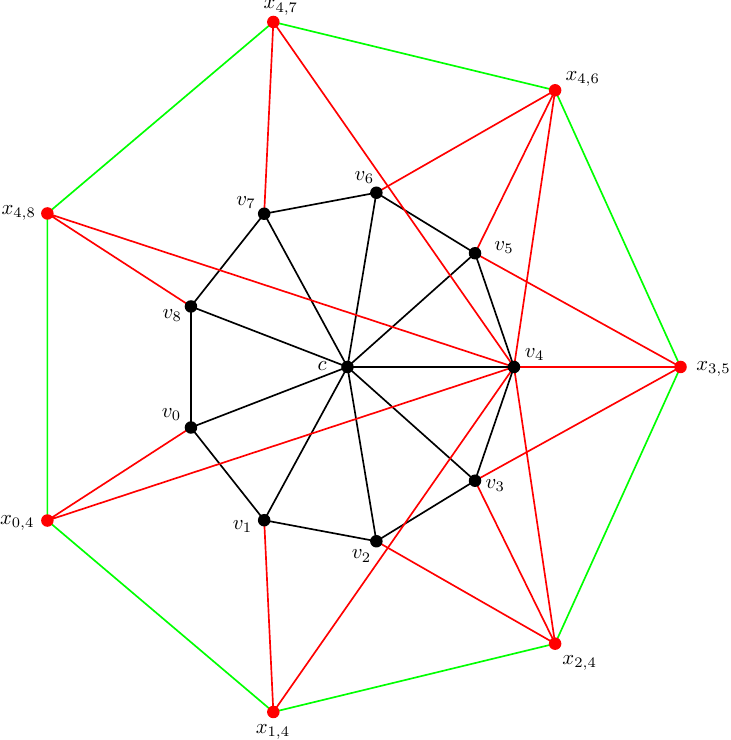}}%
\end{center}
\caption{The wheel $W_{7}$ around $v_4$ obtained from $W_9$ by
  Lemma~\ref{lem-contradiction}.}
\label{preuve-W2k+1}
\end{figure}

\begin{lemma}\label{lem-no-W5}
 $G$ does not contain any $W_5$, i.e., $k > 2$.
\end{lemma}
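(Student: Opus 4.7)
My plan is to argue by contradiction, using the three preceding lemmas to pin down enough adjacencies around a putative $W_5$ to produce a $2$-interval that cannot induce a square, a pyramid, or an octahedron. Suppose for contradiction that $k=2$, so that $G$ contains an induced $W_5$ with center $c$ and rim $v_0,v_1,v_2,v_3,v_4$ (indices mod $5$); for each $i$ write $x_i:=x_{i-1,i+1}$ for the vertex guaranteed by Lemma~\ref{lem-defx}.

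First I would assemble the local structure on $\{c,v_0,\dots,v_4,x_0,\dots,x_4\}$. Lemma~\ref{lem-defx}(1) gives $x_i\nsim c$ for every $i$, and Lemma~\ref{lem-defx}(2) yields that among rim vertices $x_i$ is adjacent exactly to $v_{i-1},v_i,v_{i+1}$, since $v_i$ is the only rim vertex other than $c$ adjacent to both $v_{i-1}$ and $v_{i+1}$. A short case analysis then shows that $\{x_0,\dots,x_4\}$ induces a $K_5$: for a ``consecutive'' pair $x_i,x_{i\pm 1}$ the defining index sets $\{i-1,i+1\}$ and $\{(i\pm 1)-1,(i\pm 1)+1\}$ are disjoint, and Lemma~\ref{lem-wi} supplies the edge; for an ``opposite'' pair $x_i,x_{i\pm 2}$ those index sets share one element while the remaining indices form an adjacent pair of rim vertices, so Lemma~\ref{lem-x1ix1j} supplies the edge.

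Next I would inspect the pair $v_1,x_4$, where $x_4=x_{3,0}$. The adjacencies just recorded give $v_1\nsim x_4$ but $v_0\sim v_1,x_4$, so $d(v_1,x_4)=2$. Four distinct common neighbors then appear: $v_0$ itself; $x_0=x_{4,1}$, adjacent to $v_1$ by definition and to $x_4$ via the $K_5$; $x_1=x_{0,2}$, adjacent to $v_1$ because $v_1$ is the common rim-neighbor of $v_0$ and $v_2$, and to $x_4$ via the $K_5$; and $x_2=x_{1,3}$, adjacent to $v_1$ by definition and to $x_4$ via the $K_5$. By the interval condition the $2$-interval $I(v_1,x_4)$ must be a square, a pyramid, or an octahedron; having already produced four common neighbors, the octahedron case is forced, and this requires the induced subgraph on $\{v_0,x_0,x_1,x_2\}$ to be $K_4$ minus a perfect matching, in particular to carry exactly four edges.

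However a direct check yields five: $v_0x_0$, $v_0x_1$, $x_0x_1$, $x_0x_2$, $x_1x_2$ are all edges, while the only missing pair is $v_0x_2$ (since $x_2=x_{1,3}$ is adjacent to $v_0$ only if $v_0\sim v_1,v_3$, and $v_0\nsim v_3$). Thus $\{v_0,x_0,x_1,x_2\}$ induces $K_4$ minus one edge, which does not match the octahedron pattern, contradicting the interval condition and ruling out $k=2$. The main obstacle in executing the plan is the bookkeeping needed to verify the $K_5$ on $\{x_0,\dots,x_4\}$ and the exact edge list on $\{v_0,x_0,x_1,x_2\}$ via the three preceding lemmas; once these are in hand the failure of the interval condition is immediate.
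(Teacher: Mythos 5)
Your proof is correct and takes essentially the same approach as the paper: both consider a $2$-interval from a rim vertex to one of the $x_{i,j}$ (your $I(v_1,x_{3,0})$ is the paper's $I(v_2,x_{1,4})$ after rotating all indices by one) and derive a contradiction with the interval condition from the triangle $x_0x_1x_2$ among the common neighbors. You go a bit further than needed — once you have three pairwise adjacent common neighbors the interval condition is already violated, so the fourth common neighbor $v_0$, the full $K_5$, and the octahedron edge-count are superfluous, though correct.
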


\begin{proof}
Suppose by way of contradiction  that $k=2$. By Lemma~\ref{lem-defx}, $v_2 \nsim x_{1,4}$.  Consider the interval
$I(v_2,x_{1,4})$. By Lemma~\ref{lem-defx}, $v_2 \sim x_{0,2}, x_{1,3}, x_{2,4}$.
Lemma~\ref{lem-x1ix1j} implies that $x_{1,4} \sim x_{2,4}, x_{1,3}$ and $x_{0,2}
\sim x_{2,4}$. By Lemma~\ref{lem-wi}, $x_{0,2} \sim x_{1,3},
x_{1,4}$ and $x_{1,3} \sim x_{2,4}$. Consequently, the pairwise adjacent vertices $x_{0,2}, x_{1,3}, x_{2,4}$ belong to the interval
$I(v_2,x_{1,4})$, contrary to the interval condition.
\end{proof}

\begin{lemma}\label{lem-contradiction}
The vertices $x_{0,k},x_{1,k}, \ldots,x_{k-2,k},x_{k-1,k+1},x_{k,k+2},
x_{k,k+3} \ldots, x_{k,2k-1}, x_{k,2k}$ form an induced cycle $C$ of
length $2k-1$ of $G$ such that $v_k$ is adjacent to all vertices of $C$
(see Figure~\ref{preuve-W2k+1}).
\end{lemma}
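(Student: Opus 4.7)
The plan is to verify three properties in turn: (a) $v_k$ is adjacent to every vertex of the proposed cycle $C$, (b) every pair of consecutive vertices along $C$ is adjacent, and (c) no two non-consecutive vertices of $C$ are adjacent (so $C$ is induced). Distinctness of the listed vertices is immediate since their defining index pairs $\{0,k\},\ldots,\{k-2,k\},\{k-1,k+1\},\{k,k+2\},\ldots,\{k,2k\}$ are pairwise distinct.

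Property (a) is almost immediate: the vertices $x_{i,k}$ with $0\le i\le k-2$ and $x_{k,j}$ with $k+2\le j\le 2k$ have $v_k$ as one of their two defining wheel-neighbors, while $x_{k-1,k+1}\sim v_k$ follows from Lemma~\ref{lem-defx}(2) since $v_k$ is a common neighbor of $v_{k-1}$ and $v_{k+1}$. For property (b), most of the consecutive adjacencies come from Lemma~\ref{lem-x1ix1j} applied with the shared index $v_k$: the adjacencies $x_{i,k}\sim x_{i+1,k}$ for $0\le i\le k-3$, $x_{k,j}\sim x_{k,j+1}$ for $k+2\le j\le 2k-1$, and the wrap-around $x_{k,2k}\sim x_{0,k}$ all follow from the corresponding edges of the wheel cycle (with $v_0v_{2k}$ being an edge in the last case). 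The two remaining adjacencies $x_{k-2,k}\sim x_{k-1,k+1}$ and $x_{k-1,k+1}\sim x_{k,k+2}$ are direct applications of Lemma~\ref{lem-wi}, taking $i=k-1$ and then $i=k$ respectively: in both cases the second index differs from the central one by exactly $2$, which is precisely the condition of Lemma~\ref{lem-wi}.

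For property (c), the non-adjacencies between two vertices from the same arm ($x_{i,k}$ with $x_{i',k}$, or $x_{k,j}$ with $x_{k,j'}$) and between vertices from opposite arms except the wrap-around all follow from Lemma~\ref{lem-x1ix1j} applied at the shared index $v_k$; the ranges $0\le i\le k-2$ and $k+2\le j\le 2k$ ensure that no spurious adjacencies among the corresponding $v$-vertices arise on the wheel cycle. The genuinely new case---which I expect to be the main obstacle---is the non-adjacency of the middle vertex $x_{k-1,k+1}$ with $x_{i,k}$ for $i\le k-3$ (and symmetrically with $x_{k,j}$ for $j\ge k+3$): here neither Lemma~\ref{lem-x1ix1j} nor Lemma~\ref{lem-wi} applies because the index sets share no element and do not have the special $\{i-1,i+1\}$ pattern. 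My approach is to derive a contradiction from the no-propeller hypothesis. If $x_{k-1,k+1}\sim x_{i,k}$, then the edge $v_k x_{k-1,k+1}$ lies in three triangles, namely $v_k x_{k-1,k+1} v_{k-1}$, $v_k x_{k-1,k+1} v_{k+1}$ and $v_k x_{k-1,k+1} x_{i,k}$; so the three tips $v_{k-1}, v_{k+1}, x_{i,k}$ must contain an adjacent pair, otherwise the configuration is a propeller with shaft $v_k x_{k-1,k+1}$. However $v_{k-1}\nsim v_{k+1}$ on the wheel cycle, and Lemma~\ref{lem-defx}(2) combined with $|i-(k\pm 1)|\ge 2$ (which holds since $i\le k-3$) gives $x_{i,k}\nsim v_{k-1}$ and $x_{i,k}\nsim v_{k+1}$, a contradiction. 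Note that Lemma~\ref{lem-no-W5} provides $k\ge 3$, which is used implicitly to keep the index arithmetic on the cycle consistent. Combining (a), (b), and (c) completes the proof.
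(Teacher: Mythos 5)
Your proof is correct, and the overall structure (verify $v_k$ is adjacent to all listed vertices, verify consecutive adjacencies, verify no chords) matches the paper's. However, there is one misreading worth flagging. You claim that Lemma~\ref{lem-wi} does not apply to the pair $x_{k-1,k+1}$, $x_{i,k}$ for $i\le k-3$ ``because the index sets share no element and do not have the special $\{i-1,i+1\}$ pattern.'' This is incorrect: Lemma~\ref{lem-wi}, read with its central index equal to $k$ and its second parameter equal to $m$, asserts precisely that $x_{k-1,k+1}\sim x_{k,m}$ if and only if $m\in\{k-2,k+2\}$, and since $x_{i,k}=x_{k,i}$ this settles every chord involving the middle vertex at once. (The index sets $\{k-1,k+1\}$ and $\{k,m\}$ of the two vertices in Lemma~\ref{lem-wi} are \emph{always} disjoint whenever both vertices are defined, so disjointness of index sets is not an obstruction.) The paper simply cites Lemma~\ref{lem-wi} at this point. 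Your alternative no-propeller argument with shaft $v_k x_{k-1,k+1}$ and tips $v_{k-1},v_{k+1},x_{i,k}$ is valid and yields the right conclusion, but it is essentially a re-derivation, inside the proof, of exactly the ``only if'' direction of Lemma~\ref{lem-wi} (compare the propeller with shaft $v_1x_{0,2}$ used in that lemma's proof). So the logic is sound, but the detour is unnecessary and reflects a misreading of which index plays the central role in Lemma~\ref{lem-wi}.
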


\begin{proof}
By Lemma~\ref{lem-x1ix1j}, $x_{i,k} \sim x_{i+1,k}$ for every $0 \leq i \leq k-3$ and
$k+2 \leq i \leq 2k$. By
Lemma~\ref{lem-wi},  $x_{k-1,k+1} \sim x_{k-2,k},
x_{k,k+2}$. Hence, $x_{0,k}x_{1,k}
\ldots x_{k-2,k}x_{k-1,k+1}x_{k,k+2}x_{k,k+3} \ldots x_{k,2k-1}x_{k,2k}$ is
a cycle $C$ of $G$.  By Lemma~\ref{lem-x1ix1j}, $x_{i,k}$ is adjacent to $x_{j,k}$ if and
only if $j \in \{i-1,i+1\}$; consequently, $C$ does not contain chords of
the form $x_{i,k}x_{j,k}$. Since, by Lemma~\ref{lem-wi}, we have $x_{k-1,k+1} \nsim x_{k,j}$ when $j
\notin \{k-2, k+2\}$, we conclude that $C$ is an induced cycle of $G$.
By the definition of $x_{i,k}$, we have $v_k \sim x_{i,k}$ for every $i$, and $v_k
\sim x_{k-1,k+1}$, by Lemma~\ref{lem-defx} $(2)$.
\end{proof}

By Lemma~\ref{lem-no-W5}, we know that $k \geq 3$, and by
Lemma~\ref{lem-contradiction}, we have constructed a wheel $W_{2k-1}$,
contrary to our choice of $k$. This finishes the proof of
Proposition~\ref{prop-no-odd-wheel}.

\section{Proof of Theorem \ref{main_th}}
\label{s:main}

In this section, we present the proof of Theorem \ref{main_th} --- the
main result of our paper. Theorem \ref{main_th} presents a topological
characterization of basis complexes of matroids and shows that a specialized
form of Conjecture 3 of \cite{Mau} is true.  Note that the
implications $(i)\Rightarrow (ii) \Rightarrow (iii)$ are clear and
follow from~\cite{Mau}. Thus in what follows we focus on proving the
implication $(iii) \Rightarrow (i)$.

Consider a graph $G$ that satisfies the interval and local positioning
conditions, that has a vertex with finitely many neighbors, and such
that its triangle-square complex $X(G)$ is simply connected. From the following result, $G$
satisfies the positioning condition. Consequently, from
Lemma~\ref{l:noprop}, Theorem~\ref{link1}, and Theorem~\ref{Th_Mau1}, the graph $G$ is the basis graph of a matroid.

\begin{theorem}
\label{t:m2} Let $G$ be a connected graph satisfying the interval and
the local positioning conditions. Then the $1$--skeleton of the
universal cover $\widetilde{X(G)}$ of the triangle-square complex
$X(G)$ of $G$ satisfies the interval and the positioning conditions.
\end{theorem}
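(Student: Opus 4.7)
The plan is to transfer the local conditions from $G$ to the $1$-skeleton $\widetilde{G}$ of $\widetilde{X(G)}$ and then to use simple connectedness of $\widetilde{X(G)}$ to upgrade the local positioning condition to the global positioning condition. A cellular covering map is by definition a combinatorial isomorphism on the star of every vertex, so the link of each $\tilde v\in\widetilde{G}$ coincides with the link of its image in $G$. Hence $\widetilde{G}$ inherits the interval condition, the local positioning condition, and, by Lemma~\ref{l:noprop}, the local triangle condition and the absence of propellers and half open books. The interval condition in the statement of Theorem~\ref{t:m2} is therefore immediate, and only the positioning condition PC($v$) requires work.

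Fix a basepoint $v\in\widetilde{G}$. I would deduce PC($v$) from the triangle condition TC($v$) and the square-pyramid condition SPC($v$), the latter two being proved jointly with PC($v$) by induction on the distance $k$ from $v$. The base case $k=2$ is essentially the collection of local conditions inherited by $\widetilde{G}$: TC($v$) in $B_2(v,\widetilde{G})$ is the local triangle condition, PC($v$) restricted to squares in $B_2(v,\widetilde{G})$ is the local positioning condition, and the smallest nontrivial instance of SPC($v$) follows from the interval condition together with the local positioning condition. For the inductive step, given a configuration at distance $k+1$ that violates one of the three conditions, concatenate its edges with geodesics from $v$ to its vertices to obtain a loop $\gamma$ in $\widetilde{G}$. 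Simple connectedness of $\widetilde{X(G)}$ implies that $\gamma$ is null-homotopic, so it bounds a disc diagram built from triangles and squares of $\widetilde{X(G)}$. Applying a reduction in the style of the proof of Lemma~\ref{l:simconn}, one iteratively removes outer $2$-cells while decreasing a lexicographic complexity (maximum distance from $v$ along the disc, then the number of vertices realizing it), using the absence of propellers and half open books in $\widetilde{G}$ to rule out the obstructive local configurations.

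Once TC($v$) and SPC($v$) are established at every vertex, PC($v$) follows by ``sliding'' an arbitrary square $u_1u_2u_3u_4$ toward $v$ via local moves provided by TC($v$) and SPC($v$), each of which preserves the quantity $d(v,u_1)+d(v,u_3)-d(v,u_2)-d(v,u_4)$, until the square lands in $B_2(v,\widetilde{G})$ where the local positioning condition closes the argument. The principal obstacle is executing the disc-reduction in the inductive step: one must argue that every minimal counterexample admits an outer $2$-cell whose removal is consistent with the distances to $v$. This is precisely where the forbidden subgraphs supplied by Lemma~\ref{l:noprop} are essential, since a propeller on a boundary edge would obstruct contracting adjacent triangles coherently with the geodesic data, and a half open book would force a square configuration violating the local positioning condition at a closer basepoint. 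Their absence, combined with simple connectedness, is what makes the induction go through.
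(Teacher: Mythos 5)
Your approach is structurally different from the paper's, and it has substantive gaps.

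The paper does not invoke the abstract universal cover and transfer properties to it; it \emph{constructs} $\widetilde{X(G)}$ from scratch as an increasing union of balls $\widetilde{B}_i$ around a fixed basepoint, simultaneously verifying an interlocking family of inductive properties (P$_i$)--(U$_i$) that encode both the metric conditions and the fact that $f_i$ behaves like a covering map on these balls. The technical heart of that construction is the equivalence relation $\equiv$ on the set $Z$ of candidate new vertices, and the verification (Proposition~\ref{equiv}) that $\equiv$ is transitive, which is where the interval condition, the local positioning condition, and the forbidden subgraphs from Lemma~\ref{l:noprop} are actually used. Your proposal has no analogue of this step.

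Two concrete gaps in your sketch. First, the transfer of the local conditions to $\widetilde{G}$ is not automatic. A covering map is an isomorphism only on stars, i.e.\ on $B_1(\widetilde{v},\widetilde{G})$ together with the $2$--cells containing $\widetilde{v}$. The interval condition does transfer by such a star argument, but the local positioning condition involves a square together with a probe vertex at distance up to $3$ from two of its corners; this configuration does not fit inside a single star, and a priori the covering map need not preserve distances $2$ and $3$ (for instance, $d(\widetilde{v},\widetilde{u}_1)=2$ in $\widetilde{G}$ does not by itself exclude $d(v,u_1)=1$ in $G$). Establishing this kind of local injectivity on larger balls is precisely what the paper's conditions (R$_i$), (S$_i$), (T$_i$) supply, and proving them is most of the work. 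Second, the disc-reduction step in your inductive argument is announced but not carried out, and you acknowledge it as the principal obstacle. Note that Lemma~\ref{l:simconn} runs in the opposite direction --- it \emph{assumes} TC($v$) and SPC($v$) to deduce simple connectedness --- so it cannot simply be reversed; the reduction you need would have to handle the boundary configurations at distance $k+1$ where TC and SPC are not yet available, and it is not at all clear this can be done. The ``sliding'' derivation of PC($v$) from TC($v$) and SPC($v$) is likewise not justified: neither condition gives a mechanism for transporting an arbitrary square toward $v$ while preserving $d(v,u_1)+d(v,u_3)-d(v,u_2)-d(v,u_4)$, and the paper instead verifies (U$_{i+1}$) by a direct case analysis on the levels of the square's vertices. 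The general shape of your plan (local conditions plus simple connectedness, verified level by level) is the right instinct, but it does not replace the inductive construction.
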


The rest of the current section is devoted to the proof of the above
theorem.  In the following, we consider a connected graph $G$ that
satisfies the interval and the local positioning conditions. By
Lemma~\ref{l:noprop}, $G$ satisfies the local triangle condition and
$G$ does not contain propellers and half open books as induced
subgraphs. We construct inductively the universal cover,
simultaneously exhibiting its various properties.

\begin{remark}
 Our proof follows closely (including much of notations) the proof of an analogous result from \cite{BCCGO}. Note however that the overall setting is totally different --- positive versus nonpositive curvature (see the introduction for more background). Thus, consequences of the two constructions (of the universal cover) are very different --- finite versus infinite (see Corollaries~\ref{c:finuni} and \ref{c:finfun}). Moreover, as for technical details, the current proof is much more involved.
\end{remark}

\subsection{Structure of the construction.}
\label{s:struct}
In this subsection we describe our inductive construction of the universal cover and we set the basis for the induction.
\medskip

\noindent
We construct
the universal cover $\widetilde X:=\widetilde{{X(G)}}$ of $X:={X(G)}$ as an increasing
union $\bigcup_{i\ge 1} \widetilde{{X}}_i$ of triangle-square
complexes. The complexes $\widetilde{{X}}_i$ are in fact spanned by
concentric combinatorial balls $\widetilde{B}_i$ in $\widetilde{{X}}$.
The covering map $f$ is then the union $\bigcup_{i\ge 1} f_i,$ where
$f_i: \widetilde{{X}}_i\rightarrow {X}$ is a locally injective
cellular map such that $f_i|_{\widetilde{{X}}_j}=f_j$, for every $j\le i$. We
denote by $\widetilde{G}_i=G(\widetilde{{X}}_i)$ the underlying graph of
$\widetilde{{X}}_i$. We denote by $\tS_i$ the set of vertices
$\tB_i\setminus \tB_{i-1}$.

Pick any vertex $v$ of $X$ as the basepoint. Define $\widetilde{B}_0=\{
\widetilde{v}\}:=\{ v\}, \widetilde{B}_1:=B_1(v,G),$ and $f_1:=$Id$_{B_1(v,G)}$.
Let $\widetilde{{X}}_1$ be the triangle-square complex spanned by $B_1(v,G).$
 Assume that, for $i\geq 1$,  we have constructed the vertex sets
$\widetilde{B}_1,\ldots,\widetilde{B}_i,$ and we have defined
the triangle-square complexes $\widetilde{{X}}_1,\ldots,\widetilde{{X}}_i$
and the corresponding cellular maps $f_1,\ldots,f_i$ from, respectively,
$\widetilde{{X}}_1,\ldots,\widetilde{{X}}_i$ to ${X}$ so
that the graph $\tG_i=G(\widetilde{X}_i)$ and the complex $\widetilde{X}_i$
satisfy
the following conditions:

\begin{enumerate}[(A{$_i$})]

\item[(P$_i$)]
$B_j(\tv,\tG_i)=\widetilde{B}_j$ for any $j\le i$;

\item[(Q$_i$)]
$\widetilde{G}_i$ satisfies the triangle and the square-pyramid conditions with respect to \ $\tv$, i.e., TC($\tv$) and SPC($\tv$).

\item[(R$_i$)]
for any $\widetilde{u}\in \widetilde{B}_{i-1},$ $f_i$
defines an isomorphism between the subgraph of $\tG_i$ induced by
$B_1(\widetilde{u},\tG_i)$ and the subgraph of $G$ induced by
$B_1(f_i(\widetilde{u}),G)$;

\item[(S$_i$)]
for any $\widetilde{w},\widetilde{w}'\in \widetilde{B}_{i-1}$ such that the
vertices $w=f_i(\widetilde{w}),w'=f_i(\widetilde{w}')$ belong to a square
$ww'uu'$ of ${\bX}$, there exist $\widetilde{u},\widetilde{u}'\in
\widetilde{B}_i$ such that $f_i(\widetilde{u})=u, f_i(\widetilde{u}')=u'$ and
$\widetilde{w}\widetilde{w}'\widetilde{u}\widetilde{u}'$ is a square of
$\widetilde{{X}}_i$.

\item[(T$_i$)]
 for any $\widetilde{w}\in \widetilde{S}_i:=\widetilde{B}_i\setminus
  \widetilde{B}_{i-1},$ $f_i$
  defines an isomorphism between the subgraphs of $\tG_i$ and of $G$ induced by
$B_1(\widetilde{w},\tG_i)$ and
  $f_i(B_1(\widetilde{w},\tG_i))$.

\item[(U$_i$)]
$\widetilde{G}_i$ satisfies the positioning condition with respect to $\tv$.

\end{enumerate}

 It can be easily checked that, $\widetilde{B}_1,\widetilde{G}_1,\widetilde{X}_1$ and $f_1$ satisfy the
conditions (P$_1$) through (U$_1$). Now we construct the set $\widetilde{B}_{i+1},$ the graph
$\widetilde{G}_{i+1}$ having $\widetilde{B}_{i+1}$ as the vertex-set, the
triangle-square complex $\widetilde{{X}}_{i+1}$ having $\tG_{i+1}$ as its
$1$--skeleton,  and the map $f_{i+1}: \widetilde{{X}}_{i+1}\rightarrow {X}$.
Let
 $$Z=\{ (\widetilde{w},z): \widetilde{w} \in \widetilde{S}_i \mbox{ and } z\in
B_1(f_i(\widetilde{w}),G)\setminus f_i(B_1(\widetilde{w},\tG_i))\}.$$
On $Z$ we define a binary relation $\equiv$ by setting $(\widetilde{w},z)\equiv
(\widetilde{w}',z')$ if and only if $z=z'$  and one of the following three
conditions is satisfied:

\begin{itemize}
\item[(Z1)] $\widetilde{w}$ and $\widetilde{w}'$ are the same or adjacent in
$\widetilde{G}_i$;
\item[(Z2)] there exists $\widetilde{u}\in \widetilde{B}_{i-1}$ adjacent in
$\widetilde{G}_i$ to $\widetilde{w}$ and $\widetilde{w}'$ and such that
$f_i(\widetilde{u})f_i(\widetilde{w})zf_i(\widetilde{w}')$ is a square
in $G$;
\item[(Z3)] there exists a square in $\widetilde{S}_i$ containing $\widetilde{w}$ and $\widetilde{w}'$ such that its image under $f_i$ together with $z$ induces a pyramid in $G$.
\end{itemize}

In what follows, the above relation will be used in the inductive step to construct $\tX_{i+1}$, $f_{i+1},$ and all related objects.

\subsection{Definition of $\tG_{i+1}$.}
\label{s:def-i+1}
In this subsection, performing the inductive step, we define $\tG_{i+1}$ and $f_{i+1}$.
First however we show that the relation $\equiv$ defined in the previous subsection is an equivalence relation.
The set of vertices of the graph $\tG_{i+1}$ will be then defined as the union of the set of vertices of the previously constructed graph $\tG_{i}$ and the set of equivalence classes of $\equiv$.
\medskip

\noindent
{\bf Convention:} In what follows, for any vertex $\widetilde{w}\in \widetilde{B}_i,$ we will
  denote by $w$ its image $f_i(\widetilde{w})$ in $X$.
\medskip

We now aim at showing that the relation $\equiv$ is an equivalence relation (Proposition \ref{equiv}). First we prove two auxiliary results.

\begin{lemma}
\label{l:a1a2a3}
For any couple $(\tw,z) \in Z$ the following properties hold:
\begin{enumerate}
\item[($A_1$)] there is no neighbor $\tz \in
  \tB_{i}$ of $\tw$ such that $f_i(\tz)=z$;
\item [($A_2$)] there is no neighbor  $\tu \in
  \tB_{i-1}$ of $\tw$ such that $u \sim z$;
\item [($A_3$)] there are no $\tx, \ty \in \tB_{i-1}$ such that
  $\tx \sim \tw, \ty$ and   $y \sim z$.
\end{enumerate}
\end{lemma}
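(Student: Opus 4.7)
The plan is to derive all three conclusions directly from the induction hypotheses $(R_i)$ and $(S_i)$ together with the defining property of $Z$. Property $(A_1)$ is nothing but the definition of $Z$ in disguise: since $z\sim w$ in $G$ we have $z\ne w$, so the defining requirement $z\notin f_i(B_1(\tw,\tG_i))$ says precisely that no neighbor of $\tw$ in $\tG_i$ maps to $z$; at this stage of the construction every neighbor of $\tw$ lives in $\tB_i$, which gives $(A_1)$.

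For $(A_2)$ I would argue by contradiction. Suppose $\tu\in\tB_{i-1}$ is a neighbor of $\tw$ with $u\sim z$ in $G$. Property $(R_i)$ provides an isomorphism $f_i\colon B_1(\tu,\tG_i)\to B_1(u,G)$, and the neighbor $z$ of $u$ therefore lifts to a unique $\tz'\sim\tu$. The vertices $\tw$ and $\tz'$ both sit in $B_1(\tu,\tG_i)$, and their images $w,z$ are adjacent in $G$, so the same isomorphism forces $\tw\sim\tz'$, contradicting $(A_1)$.

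The proof of $(A_3)$ is the longest and the step that will require the most thought; the main obstacle is identifying which inductive hypothesis handles which sub-case. Suppose $\tx,\ty\in\tB_{i-1}$ satisfy $\tx\sim\tw$, $\tx\sim\ty$ and $y\sim z$. Pushing down through $f_i$ and using the injectivity side of $(R_i)$ at $\tx$ to check distinctness (here one also uses $z\ne x$, which follows from $x=f_i(\tx)\in f_i(B_1(\tw,\tG_i))$ versus the defining property of $Z$), one obtains four distinct vertices $w,x,y,z\in V(G)$ carrying the edges $wx$, $xy$, $yz$ together with the already-known edge $zw$. The natural split is on whether the $4$-cycle $wxyz$ has a chord in $G$. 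If $x\sim z$, the pair $(\tx,\tw)$ already meets the hypotheses of $(A_2)$, which is a contradiction. If $w\sim y$, then applying $(R_i)$ at $\tx$ --- whose link contains both $\tw$ and $\ty$ --- forces $\tw\sim\ty$, and then the pair $(\ty,\tw)$ meets the hypotheses of $(A_2)$.

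The remaining case is the one for which $(S_i)$ was designed: $w\nsim y$ and $x\nsim z$, so that $wxyz$ is an induced $4$-cycle of $G$ and hence a square cell of $X$. Applied to the pair $\tx,\ty\in\tB_{i-1}$ and this square, $(S_i)$ produces lifts $\tw',\tz'\in\tB_i$ with $f_i(\tw')=w$, $f_i(\tz')=z$ and $\tx\ty\tz'\tw'$ a square of $\tX_i$. The injectivity of $f_i$ on $B_1(\tx,\tG_i)$ granted by $(R_i)$ identifies $\tw'$ with $\tw$, so $\tz'$ becomes a neighbor of $\tw$ in $\tG_i$ with $f_i(\tz')=z$, contradicting $(A_1)$ one last time. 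The only delicate point is recognising the chord-free sub-case as precisely the one $(S_i)$ was built for; everything else bootstraps cleanly from $(R_i)$.
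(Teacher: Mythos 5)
Your proof is correct and follows essentially the same approach as the paper: $(A_1)$ from the definition of $Z$, $(A_2)$ via $(R_i)$ at $\tu$, and $(A_3)$ via $(S_i)$ on the square $yxwz$ and then $(R_i)$ at $\tx$ to identify the lift of $w$ with $\tw$. The one place you go beyond the paper is worth noting: the paper simply asserts that $yxwz$ is a square of $G$ and applies $(S_i)$, whereas you explicitly rule out the two possible chords, reducing $x\sim z$ to a violation of $(A_2)$ at $\tx$ and $w\sim y$ (via $(R_i)$ at $\tx$, giving $\tw\sim\ty$) to a violation of $(A_2)$ at $\ty$ --- a useful bit of extra rigor, and the right way to fill the small gap.
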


\begin{proof}
If $\tw$ has a neighbor $\tz \in \tB_{i-1}$ such that $f_i(\tz)=z$,
then $(\tw,z) \notin Z$, a contradiction. This establishes ($A_1$).

If $\tw$ has a neighbor $\tu \in \tB_{i-1}$ such that $u \sim z$, then
by (R$_i$) applied to $\tu$, there exists $\tz \in \tB_i$ such that
$\tz \sim \tu,\tw$ and $f_i(\tz)=z$. Thus $(\tw,z) \notin Z$, a contradiction, establishing ($A_2$).

If there exist $\tx, \ty \in \tB_{i-1}$ such that
$\tx \sim \tw, \ty$ and $y \sim z$, then $yxwz$ is a
 square in $G$. From (S$_i$) applied to $\ty,\tx$, there
exists $\tz \in \tB_i$ such that $\tz \sim \ty,\tw$ and $f_i(\tz)=z$. Thus
$(\tw,z)
\notin Z$, a contradiction, and therefore ($A_3$) holds as well.
\end{proof}

\begin{lemma}\label{c:xexists}
  Let $\tu,\tu'\in \wt B_{i-1}$ and $\tw,\tw',\tw'' \in \tS_{i}$ be such that
  $\tu \sim \tw,\tw'$ and $\tu' \sim \tw',\tw''$. If $\tw \sim \tw'$,
  then there exist $\ty \in \wt B_{i-1}$ and $\tx \in \wt B_{i-2}$ such
  that $\ty \sim \tw, \tw'$ and $\tx \sim \tu', \ty$.
\end{lemma}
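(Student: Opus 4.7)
The plan is to branch on the relationship between $\tu$ and $\tu'$ in $\wt B_{i-1}$, observing first that (P$_i$) together with $\tu \sim \tw$ and $\tu' \sim \tw'$ forces $d(\tv, \tu) = d(\tv, \tu') = i-1$, and that when $\tu \neq \tu'$ the injectivity of $f_i$ on $B_1(\tw', \tG_i)$ provided by (T$_i$) also gives $u := f_i(\tu) \neq f_i(\tu') =: u'$ in $G$.

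If $\tu = \tu'$, then $\ty := \tu$ is already adjacent to $\tw, \tw'$, and any neighbor of $\tu$ in $\tB_{i-2}$ on a shortest path to $\tv$ serves as $\tx$. If $\tu \neq \tu'$ and $\tu \sim \tu'$, again I set $\ty := \tu$ and apply the triangle condition TC$(\tv)$ from (Q$_i$) to the adjacent pair $\tu, \tu' \in \tS_{i-1}$, extracting $\tx \in \tB_{i-2}$ adjacent to both.

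The remaining case $\tu \neq \tu'$, $\tu \nsim \tu'$ is the substantive one: $\tw'$ is then a common neighbor so $d_{\tG_i}(\tu, \tu') = 2$, and SPC$(\tv)$ from (Q$_i$) applied to the triple $(\tw', \tu, \tu')$ yields two subcases. In the first, SPC already provides a vertex $\tx \sim \tu, \tu'$ at distance $i-2$ from $\tv$, and $\ty := \tu$ closes the argument. In the second, SPC produces two non-adjacent vertices $\tilde a, \tilde a' \sim \tw', \tu, \tu'$ both at distance $i-1$ from $\tv$; the plan there is to argue that at least one of $\tilde a, \tilde a'$ is adjacent to $\tw$ in $\tG_i$. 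Once this is known, choosing say $\tilde a \sim \tw$ lets us set $\ty := \tilde a$ and, using TC$(\tv)$ once more on the adjacent pair $\tu', \tilde a \in \tS_{i-1}$, produce the required $\tx \in \tB_{i-2}$.

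The main obstacle is this adjacency claim in the SPC-pyramid subcase, which I would establish by contradiction using (T$_i$). The key step is to observe that if $\tilde a \nsim \tw$ in $\tG_i$, then $a := f_i(\tilde a) \nsim w := f_i(\tw)$ in $G$: were $a \sim w$, the unique preimage of $a$ in $B_1(\tw, \tG_i)$ provided by (T$_i$) would also lie in $B_1(\tw', \tG_i)$ (since $a \sim w'$) and, by injectivity of $f_i$ on the latter ball, coincide with $\tilde a$, contradicting $\tilde a \nsim \tw$. Symmetrically $\tilde a' \nsim \tw$ implies $a' \nsim w$. Assuming both failures simultaneously, the five vertices $w', u, w, a, a'$ in $G$ induce a propeller with shaft $w' u$ and pairwise non-adjacent tips $w, a, a'$ (the three tips are adjacent to both shaft endpoints by the SPC conclusion and the hypotheses, and pairwise non-adjacent since $a \nsim a'$ from SPC and $w$ is non-adjacent to $a, a'$ by assumption), contradicting Lemma~\ref{l:noprop} and completing the argument.
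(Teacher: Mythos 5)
Your argument follows essentially the same route as the paper's: handle the case where $\tu,\tu'$ already have a common neighbor in $\tB_{i-2}$ (your first three branches are all subsumed by that single check), then in the remaining case invoke SPC$(\tv)$ at $\tw'$, use the no-propeller property in $G$ to force one of the two new vertices to be adjacent to $\tw$, lift that adjacency back to $\tG_i$, and finish with TC$(\tv)$.

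There is one flawed step, though easily repairable. You write that if $a\sim w$, then ``the unique preimage of $a$ in $B_1(\tw,\tG_i)$ provided by (T$_i$)'' lies in $B_1(\tw',\tG_i)$ and must equal $\tilde a$. But (T$_i$) only asserts that $f_i$ restricted to $B_1(\tw,\tG_i)$ is an isomorphism onto its \emph{image} $f_i(B_1(\tw,\tG_i))$; it does \emph{not} guarantee that a neighbor $a$ of $w$ in $G$ has any preimage adjacent to $\tw$ --- indeed such missing preimages are exactly the pairs collected in $Z$ at the next stage. The intended conclusion ($\tilde a\nsim\tw \Rightarrow a\nsim w$) does hold, but it should be derived directly: $\tilde a$ and $\tw$ both lie in $B_1(\tw',\tG_i)$, and (T$_i$) applied to $\tw'$ makes $f_i$ an isomorphism on that ball, so $\tilde a\sim\tw$ iff $a\sim w$. (Equivalently, the paper uses (R$_i$) at $\tu\in\tB_{i-1}$, since $\tilde a,\tw\in B_1(\tu,\tG_i)$ and (R$_i$) gives a full isomorphism onto $B_1(u,G)$.) With this correction the argument is sound and matches the paper's.
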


\begin{proof}
If there exists $\tx \in \wt B_{i-2}$ such that $\tx \sim \tu, \tu'$,
we are done by setting $\ty = \tu$. Assume in the following that it is
not the case. By the square-pyramid condition (Q$_i$), there exist
$\ty, \ty' \in \wt B_{i-1}$ such that $\tw' \sim \ty, \ty'$ and
$\tu,\ty,\tu',\ty'$ is a square. By (T$_i$) applied to
$\tw'$, $uyu'y'$ is a square. Consider the triangles
$uw'y,uw'y',$ and $uw'w$, all three sharing the common edge $uw'$. Since
$G$ does not contain propellers (cf.\ Lemma \ref{l:noprop}), either
$w\sim y$ or $w\sim y'$, say $w\sim y$. By (R$_i$) applied to $\tu$,
we get $\tw \sim \ty$. Using the triangle condition (Q$_i$), we get a
vertex $\tx \in \wt B_{i-2}$ such that $\tx \sim \ty,\tu'$.
\end{proof}

\begin{proposition} \label{equiv} The relation $\equiv$ is an equivalence relation on
$Z$.
\end{proposition}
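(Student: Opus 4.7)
Reflexivity is immediate: $(\tw,z)\equiv(\tw,z)$ via (Z1) with $\tw=\tw'$. Symmetry is also immediate, since each of (Z1), (Z2), (Z3) is stated symmetrically in $\tw$ and $\tw'$ (for (Z3), the pair is unordered inside a square). So the real work is to verify transitivity: assuming $(\tw_1,z)\equiv(\tw_2,z)$ and $(\tw_2,z)\equiv(\tw_3,z)$, deduce $(\tw_1,z)\equiv(\tw_3,z)$.

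My plan is to do a case analysis on the six unordered pairs of witnesses $\{(\mathrm{Z}a),(\mathrm{Z}b)\}$ with $a,b\in\{1,2,3\}$. In each case I will work downstairs in $X$, where one has a small combinatorial configuration formed by $z,w_1,w_2,w_3$ together with at most two auxiliary vertices coming from the (Z2)/(Z3) witnesses, and apply the interval condition on the $2$--interval $I(w_1,w_3)$ (when $w_1\nsim w_3$) and/or the local positioning condition with basepoint $z$ or one of the $u$'s. The forbidden-subgraph lemma (Lemma \ref{l:noprop}: no propeller, no half open book) is what will collapse the downstairs configuration into one of the three canonical patterns in the definition of $\equiv$. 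Once the correct pattern is identified in $G$, I will lift the witness to $\tG_i$ using whichever of (R$_i$), (T$_i$), (S$_i$), (Q$_i$) applies: (R$_i$)/(T$_i$) give local injectivity around $\tB_{i-1}$ and $\tS_i$ respectively; (S$_i$) lifts a downstairs square spanning $\tB_{i-1}$ to the top layer; and (Q$_i$) (the square-pyramid condition together with Lemma \ref{c:xexists}) finds the common neighbor in $\tB_{i-1}$ required by (Z2).

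The sample cases indicate the flavor. For (Z1)+(Z1) with $\tw_1\sim\tw_2\sim\tw_3$, if $\tw_1\sim\tw_3$ we are done by (Z1); otherwise, $I(w_1,w_3)$ contains both $w_2$ and $z$ and by the interval condition forms a square, pyramid, or octahedron; ($A_1$)--($A_3$) forbid any of the resolution vertices to lie in $\tB_{i-1}$, forcing the configuration into (Z3) (or, in the square case with the fourth vertex another element of $\tS_i$, back into (Z1)). For (Z2)+(Z2) with witnesses $\tu_1,\tu_2\in\tB_{i-1}$, apply the local positioning condition to the square $u_1w_1zw_2$ with basepoint $u_2$ (and use Lemma \ref{c:xexists} to move between levels): the propeller exclusion centered at the edge $w_2z$ forces either $u_1=u_2$ or $\tu_1\sim\tu_2$, hence (Z2) directly. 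The mixed cases (Z1)+(Z2), (Z1)+(Z3), (Z2)+(Z3), (Z3)+(Z3) are handled by the same mechanism: first identify a pyramid or a pair of squares sharing the apex $z$, then use the half-open-book exclusion to force the missing adjacency and finally use the fact that $\tw_1$ and $\tw_3$ are in $\tS_i$ (hence via (T$_i$) any common neighbor lifts).

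The main obstacle will be bookkeeping, not any single hard step: in each case I must simultaneously verify that the witness vertex lives at the correct level ($\tB_{i-1}$ for (Z2), $\tS_i$ for (Z1) and (Z3)) and that no shortcut creates an element of $\tB_i$ projecting to $z$, which would contradict $(\tw_k,z)\in Z$. This is precisely what ($A_1$)--($A_3$) of Lemma \ref{l:a1a2a3} are designed to rule out, so the argument reduces to a systematic invocation of these three obstructions inside each subcase. The delicate subcase is (Z3)+(Z3), where two distinct squares of $\tS_i$ share $\tw_2$ and both produce pyramids with apex $z$ in $G$; here, after applying the local positioning condition with basepoint $z$, I expect to use the absence of half open books to merge the two squares into a single pyramid or to produce the (Z2) witness in $\tB_{i-1}$ directly via Lemma \ref{c:xexists}.
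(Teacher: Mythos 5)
Your high-level plan matches the paper's: reflexivity and symmetry for free, then a six-case analysis of the pairs of witnesses for transitivity, collapsing configurations in $G$ via the interval condition, the local positioning condition, and the propeller/half-open-book exclusions, and then lifting via (R$_i$), (S$_i$), (T$_i$), (Q$_i$). But the sketch has concrete errors and omits steps that carry much of the weight.

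The most visible error is in the (Z2)+(Z2) sample. With downstairs squares $w u w' z$ and $w' u' w'' z$, the witnesses $u,u'\in f_i(\tB_{i-1})$ are the vertices \emph{opposite} to $z$, hence $u\nsim z$ and $u'\nsim z$. There are therefore no triangles containing the edge $w'z$ among the given vertices, and a ``propeller exclusion centered at $w'z$'' has nothing to act on. The actual dichotomy here is on whether $u\sim u'$: if $u\sim u'$, half-open-book exclusion plus (S$_i$) forces $\tw\sim\tw''$, landing in (Z1), not (Z2); if $u\nsim u'$, the case splits further into whether $\tu,\tu'$ have a common neighbor in $\tB_{i-2}$, and each branch requires constructing several fresh vertices in $G$ (via the interval and local triangle conditions) and lifting them carefully. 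Your conclusion ``hence (Z2) directly'' is not what happens.

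Two structural ingredients of the argument are also missing. First, before the six cases one reduces to $\tw\nsim\tw''$ and to the assumption that no vertex of $\tB_{i-1}$ is adjacent to both $\tw$ and $\tw''$ (the paper's ($A_4$), ($A_5$)); these are easy disposals via (Z1) and (Z2), but they are then used throughout to rule out configurations, and (Z1)+(Z2), (Z1)+(Z3), (Z2)+(Z3), (Z3)+(Z3) are handled by \emph{reducing} back to (Z1)+(Z1) or (Z1)+(Z2) rather than independently. Second, one needs the auxiliary fact (Claim~\ref{c:xz=3}) that the vertex $\tx\in\tB_{i-2}$ produced by Lemma~\ref{c:xexists} satisfies $d(x,z)=3$ downstairs; without it, the local positioning condition with basepoint $x$ -- which is the engine in Case (Z1)+(Z1) and in (Z2)+(Z2)(i) -- does not fire. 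Finally, the lifting step is not bookkeeping: in each case you must build new vertices in $G$ and verify, via a precise chain of (R$_i$)/(S$_i$)/(T$_i$) applications, that their preimages exist, sit at the right level, and carry the required adjacencies; the obstruction lemma $(A_1)$--$(A_3)$ controls the \emph{lifted} picture, not the downstairs one, so it cannot by itself ``force the configuration into (Z3)''.
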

\begin{proof} Since the
  binary relation $\equiv$ is reflexive and symmetric, it suffices to
  show that $\equiv$ is transitive. Let $(\widetilde{w},z)\equiv
  (\widetilde{w}',z')$ and $(\widetilde{w}',z')\equiv (\widetilde{w}'',z'')$. We
  will prove that $(\widetilde{w},z)\equiv (\widetilde{w}'',z'')$. By
  the definition of $\equiv,$ we conclude that $z=z'=z''$. By the definition of
$\equiv$, we have $z \sim w,w',w''$.

If $\tw \sim \tw''$ (in $\widetilde{G}_i$), then by the definition of $\equiv$, we have
$(\tw,z)\equiv
(\tw'',z)$ and we are done. If $\tw \nsim \tw''$ and if there exists
$\tu \in \tB_{i-1}$ such that $\tu \sim \tw, \tw''$, then by (R$_i$)
applied to $\tu$, we obtain that $u \sim w, w''$ and $w \nsim
w''$. Since $(\tw,z), (\tw'',z) \in Z$, we have $z\sim w,w''$. By ($A_2$) (cf.\ Lemma \ref{l:a1a2a3}) we have that $z \nsim u$. Thus $uwzw''$ is a
square in $G$,
and by condition (Z2), we are done. Therefore, in the rest of the
proof, we will assume the following:
\begin{enumerate}
\item[($A_4$)] $\tw \nsim \tw''$;
\item[($A_5$)] there is no $\tu \in \tS_{i-1}$ such that $\tu \sim
\tw, \tw''$.
\end{enumerate}
Observe that it implies in particular that $i\geq 2$.

\begin{claim}
  \label{c:xz=3}
  Let $\tu,\tu'\in \wt B_{i-1}$ be two vertices with $\tu \sim \tw,\tw'$ and $\tu' \sim \tw',\tw''$. If $\tx \in \wt B_{i-2}$ is adjacent to both $\tu$ and $\tu'$, then $d(x,w)=d(x,w')=d(x,w'')=2$ and $d(x,z)=3$.
\end{claim}
\begin{proof}
By the condition (R$_i$) applied to $\tu$ (respectively, $\tu'$) we have that $d(x,w)=d(x,w')=2$ (respectively, $d(x,w'')=d(x,w')=2$). We show now that $d(x,z)=3$. By ($A_2$) we have that $x\neq z$, and by ($A_3$) we have that $x\nsim z$. Assume that $d(x,z)=2$. By the local triangle condition, there exists a vertex $x'\sim z,w,x$. If $x'\sim u$ then, by (R$_i$), there exists a vertex $\wt x' \in \wt B_{i-1}$ such that $\wt x' \sim \tw,\tu,\tx$ and $f_i(\wt x')=x'$. This however contradicts ($A_2$). If $x'\nsim u$, then consider the square $x'wux$. By (S$_i$) applied to vertices $x,u$, there exists a square $\wt x'\wt w_0 \tu\tx$ in $\wt X_i$ with $f_i(\wt x')=x'$ and $f_i(\wt w_0)=w_0$. By (R$_i$) applied to $\tu$, we have that $\wt w_0 =\tw$.
Again we obtain $\wt x' \sim \tw, \tx$ and $x'=f_i(\wt x')\sim z$, which contradicts ($A_2$).
In any case we get a contradiction, thus we must have $d(x,z)=3$.
\end{proof}

We distinguish six cases depending on which of the conditions (Z1), (Z2), or (Z3) are satisfied
by the pairs $(\widetilde{w},z)\equiv
(\widetilde{w}',z')$ and $(\widetilde{w}',z')\equiv (\widetilde{w}'',z'')$.

\medskip\noindent
{\bf Case (Z1)(Z1):}  $\widetilde{w}'$ is adjacent in  $\widetilde{G}_i$ to both
$\widetilde{w}$ and $\widetilde{w}''$.

\medskip
By (T$_i$), we have that $w\neq w''$ and $w\nsim w''$. By (Q$_i$), the
graph $\widetilde{G}_i$ satisfies the triangle condition
TC($\widetilde{v}$), thus there exist two vertices
$\widetilde{u},\widetilde{u}'\in \widetilde{S}_{i-1}$ such that
$\widetilde{u}$ is adjacent to $\widetilde{w}, \widetilde{w}'$ and
$\widetilde{u}'$ is adjacent to $\widetilde{w}',\widetilde{w}''$. By
($A_5$), we have $\tu \nsim \tw''$, $\tu' \nsim \tw$, in particular
$\tu \neq \tu'$. By (R$_i$) applied to $\tw'$, it implies that $u
\nsim w''$ and $u' \nsim w$.

By Lemma~\ref{c:xexists}, we can assume that there exists a vertex
$\wt x \in \wt B_{i-2}$ adjacent to both $\wt u$ and $\wt u'$.  By
Claim \ref{c:xz=3}, we have $d(x,w)=d(x,w')=d(x,w'')=2$ and
$d(x,z)=3$.  By the interval condition applied to $I(w,w'')$, either
there exists a vertex $u_0\sim w,w',w''$ with $u_0\nsim z$, or there
exists a vertex $w'''\sim z,w,w''$ with $w'''\nsim w'$.  In the first
case, by the local positioning condition we have that $u_0\sim x$.
Observe that $u_0 \sim u$ (respectively, $u_0 \sim u'$), since otherwise
$x,w,w'$ (respectively, $x,w',w''$) belong to the interval $I(u,u_0)$ (respectively, to $I(u',u_0)$),
and $x \nsim w,w'$ (respectively, $x \nsim w,w'$), contradicting the interval condition.  By (R$_i$) applied to
$\tu$, there is a vertex $\wt u_0 \sim \tw,\tw',\tu,\tx$ with
$f_i(\tu_0) = u_0$.  By (R$_i$) applied to $\tx$, $\tu_0 \sim \tu'$
and by (R$_i$) applied to $\tu'$, $\tu_0 \sim \tw''$. This contradicts
($A_5$).

Thus, there exists a vertex $w'''\sim z,w,w''$ with
$w'''\nsim w'$.  By the local positioning condition, we have
$d(x,w''')=2$. By the local triangle condition applied to the edge
$w''w'''$, there is a vertex $u''\sim w'',w''',x$. Since $u \nsim
w''$, $u \neq u''$.  If $u''=u'$, then by the interval condition applied to
$I(w',w''')$, we have $u'' \sim w$. Thus, by (T$_i$) applied to $\tw'$, we get $\tu' \sim \tw$, contradicting ($A_5$).  If $u''\sim u'$,
by (R$_i$) applied to $\tu'$ there is a vertex $\wt u'' \in \wt
B_i$ with $\wt u'' \sim \tx,\tu', \tw''$, and $f_i(\wt u'')=u''$. If
$u''\nsim u'$, by (S$_i$) applied to the square $xu'w''u''$ and
to the vertices $\tu',\tx$, there is a vertex $\wt u'' \in \wt B_i$
with $\wt u'' \sim \tx, \tw''$ and $f_i(\wt u'')=u''$. In both cases,
applying (R$_i$) to $\wt u''$ we get a vertex $\wt w'''\sim \tw '',
\wt u''$ such that $f_i(\wt w ''')=w'''$ and $d(\tx,\tw''')=2$.

Proceeding analogically for the edge $ww'''$ (instead of $w''w'''$) we
obtain a vertex $\wt u''' \sim \tw,\tx$ such that $u'''=f_i(\wt
u''')\sim w,w''',x$. If $u'' \sim u'''$, by (R$_i$) applied to
$\tx$, we have $\tu'' \sim \tu'''$, and by (R$_i$) applied to $\tu''$ we get that $\tu'''\sim \tw'''$. If $u'' \nsim u'''$, then again we obtain that $\tu''' \sim \tw'''$, by using (S$_i$) for the square $xu''w'''u'''$ and (R$_i$) for $\tx$ and $\tu''$.
By (R$_i$) applied to $\tu'''$, we have $\tw \sim \tw'''$, and by (T$_i$) applied to $\tw$, we have $\tw'''\nsim \tw'$. Consequently, the condition (Z3) holds
for $\tw,\tw''$ and the pyramid $zww'w''w'''$. Hence, $(\tw,z) \equiv (\tw'',z'')$.
This finishes the proof in Case (Z1)(Z1).

\medskip\noindent
{\bf Case (Z1)(Z2):}  $\widetilde{w}$ is adjacent in  $\widetilde{G}_i$ to $\widetilde{w}'$, and there exists $\tu'\in \widetilde{B}_{i-1}$ adjacent in
$\widetilde{G}_i$ to $\widetilde{w}'$ and $\widetilde{w}''$ and such that
$w'u'w''z$ is a square
in $G$.

\medskip
By (R$_i$), we have  $w\neq w''$. By (Q$_i$), there exists a vertex
$\widetilde{u}\in \widetilde{S}_{i-1}$ such that $\widetilde{u}\sim \tw,\tw'$. By ($A_5$), we have $\tu \nsim \tw''$, $\tu' \nsim
\tw$, in particular, $\tu \neq \tu'$.

By Lemma~\ref{c:xexists}, we can assume that there exists a vertex
$\wt x \in \wt B_{i-2}$ adjacent to both $\wt u$ and $\wt u'$.
By Claim \ref{c:xz=3}, we have $d(x,w)=d(x,w')=d(x,w'')=2$ and $d(x,z)=3$. Note that $w\nsim w''$, otherwise the square $ww'u'w''$ would falsify the local positioning
condition for $x$. By the local triangle condition there is a vertex $y\sim w'',u',w$.

Assume first that $y\sim z$. Then $d(x,y)=2$, and consequently $y\sim w'$ (otherwise the square $yww'u'$ falsifies the local positioning condition for $x$). The condition (R$_i$) applied to $\tu'$ shows that there exists a vertex $\ty \sim \tu',\tw', \tw''$ with $f_i(\ty)=y$. By ($A_2$), we have $\ty \in \wt S_i$, and by (T$_i$) applied to $\tw'$, we have that $\ty \sim \tw$. Hence the situation is the same as in Case (Z1)(Z1), with $\ty$ playing the role of $\tw'$.

Assume now that $y\nsim z$. By the local positioning
condition applied to the square $wyw''z$ and the basepoint $x$, we have
that $y\sim x$. By (R$_i$) applied to $\tu'$, there is a vertex $\ty
\sim \tx,\tw'',\tu'$ in $\wt S_{i-1}$ with $f_i(\ty)=y$. If $y\sim u$, by
(R$_i$) (applied to $\tx$ and then to $\tu$), we have that $\ty\sim
\tw$.  If $y\nsim u$,  by (S$_i$) applied to the square $xywu$ and
to the vertices $\tx,\tu$, we also get $\ty \sim \tw$.  Since, $\ty\in \wt
S_{i-1},$ applying (Z2) to the square $wzw''y$, we obtain that
$(\tw,z)\equiv (\tw'',z'')$. This finishes the proof in Case (Z1)(Z2).

\medskip\noindent
{\bf Case (Z1)(Z3):}  $\tw$ is adjacent in  $\tG_i$ to $\tw'$ and there exist $\tu',\tu''\in \tS_{i}$ adjacent in
$\widetilde{G}_i$ to $\tw',\tw''$ and such that the vertices
$u',u'',w',w'',z$ induce a pyramid in $G$.

\medskip
By (T$_i$), $w \neq u', w \neq u''$ and $w \neq w''$.  By the
no-propeller Lemma \ref{l:noprop} applied to the triangles
$w'zw,w'zu',$ and $w'zu''$, either $w\sim u'$ or $w\sim u'',$ say
$w\sim u'$. By the condition (T$_i$), $\tw\sim \tu'$. Then
replacing $\tw'$ by $\tu',$ since $(\tw,z)\equiv(\tu',z),
(\tu',z)\equiv (\tw'',z)$ and $\tw\sim \tu'\sim\tw''$, we are in
conditions of Case (Z1)(Z1), thus $(\tw,z)\equiv (\tw'',z)$ and we are
done.

\medskip\noindent
{\bf Case (Z2)(Z2):}  There exists $\tu\in \widetilde{B}_{i-1}$ adjacent in
$\widetilde{G}_i$ to $\tw$ and $\tw'$ and there exists $\tu'\in \widetilde{B}_{i-1}$ adjacent in
$\widetilde{G}_i$ to $\tw'$ and $\tw''$ such that $wuw'z$ and $w'u'w''z$ are  squares
in $G$.

\medskip
By ($A_5$), $\tu\ne \tu'$ and $\tu\nsim \tw'', \tu'\nsim \tw$.
By (T$_i$), we have $u\ne u'$ and $w \nsim w'$, $w' \nsim w''$. If $w
= w''$, then $u, u', z$ belong to the interval $I(w,w')$
and consequently, $z \sim u$ or $z \sim u'$, contradicting ($A_2$).
If $u \sim w''$, then $w, w', w''$ belong to the interval $I(u,z)$;
consequently, either $w' \sim w$ or $w' \sim w''$, a
contradiction. For the same reasons, $u'\nsim w$.

If $u\sim u',$ since $G$ does not contain half open books (Lemma
\ref{l:noprop}), the previous constraints imply that $w\sim w''$.
Then (S$_i$) applied to the square $wuu'w''$ implies that $\tw\sim
\tw''$ and we are done because $(\tw,z)\equiv (\tw'',z)$ by (Z1). So,
further we will suppose that $u\nsim u'$. By (T$_i$), $\tu\nsim
\tu'$. Below, we consider separately two cases, (i) and (ii).
\medskip

\noindent
(i) There exists a vertex $\tx \in \tB_{i-2}$ adjacent to both $\tu$ and $\tu'$.
\medskip

\noindent
By Claim \ref{c:xz=3}, we have $d(x,w)=d(x,w')=d(x,w'')=2$ and $d(x,z)=3$. By the interval condition, the vertices $w$ and $w''$ belong to a square of $G$. If this square contains $z,$ then the fourth vertex of this square, denote it $y$, will be adjacent to $x$ by the local positioning condition. By (R$_i$) applied to $\tx$, there exists a vertex $\ty\in \tB_{i-1}$ with $\ty\sim\tx$ and $f_i(\ty)=y$. If $y\sim u,$ then by (R$_i$) applied to $\tx$ and then to $\tu$ we obtain that $\tu\sim\ty$ and $\ty\sim\tw$. On the other hand, if $u\nsim y,$ then (S$_i$) applied to the square $xywu$ also implies that $\ty\sim \tw$. Analogously, we can conclude that $\ty\sim \tw''$.
By (Z2) applied to the square $wyw''z$, we deduce that $(\tw,z)\equiv (\tw'',z),$ as required.

 Now suppose that any square of $G$ containing $w$ and $w''$ has the form $wyw''y',$ where $y,y'\ne z$ and $z\sim y,y'$. By the local positioning condition and since $d(x,z)=3$, we have $d(x,y)=d(x,y')=2$.
 By the local triangle condition applied to vertices $w,y,x$ (respectively, $w'',y,x$) there exists a vertex $s\sim w,y,x$ (respectively,  $s'\sim w'',y,x$). By (R$_i$) applied to $\tx$, there exist vertices $\ts,\ts' \in \tB_i$ adjacent to $\tx$ and such that
 $f_i(\ts)=s, f_i(\ts')=s'$. By (R$_i$) applied to $\tx$ and then to $\tu$ (if $s\sim
 u$) or by (S$_i$) (applied to the square $uxsw$, if $s\nsim u$), we
 have  $\ts \sim \tw$. Similarly, $\ts'\sim
 \tw''$. Again, by (R$_i$) applied to $\ts$, there is a vertex $\ty\sim
 \ts,\tw$ with $f_i(\ty)=y$. By (R$_i$) applied to $\ts$ (if $s\sim
 s'$) or by (S$_i$) (applied to the square $ysxs'$ if $s\nsim s'$), we have that $\ty \sim \ts'$.
 Then, by (R$_i$) applied to $\ts'$, we obtain that $\ty \sim \tw''$. Analogously, we show that there is a vertex $\ty' \sim \tw,\tw''$ with $f_i(\ty')=y'$.
 Since $z\sim y,y'$, by ($A_2$) we have that $\ty,\ty' \in \tS_i$.
 As a consequence, $(\tw,z)\equiv (\tw'',z)$ by (Z3).

\medskip
\noindent
(ii) There is no vertex in $\wt B_{i-2}$ adjacent to both $\wt u$ and $\wt u'$.
\medskip

\noindent
By the square-pyramid condition (Q$_i$), there exist two distinct
vertices $\ty,\ty'\in \wt S_{i-1}$ with $f_i(\ty)=y,f_i(\ty')=y'$,
such that $\tw' \sim \ty,\ty'$ and $\tu\ty\tu'\ty'$ is a square.
By (R$_i$), the vertices $y,y'$ are both adjacent to $u,u',w'$, and $y\nsim y'$. By the triangle condition (Q$_i$) there is a
vertex $\tx \in \tB_{i-2}$ adjacent to $\ty$ and $\tu'$. If $\ty \sim
\tw$, then replacing $\tu$ by $\ty$ and applying case (i), we are
done.

Suppose now that $\ty \nsim \tw$.  By ($A_2$), we have $z \nsim y,y'$. By the
local triangle condition, in $G$ there exists a common neighbor $w'''$
of $y,w,$ and $z$. If $w'''\sim u$, by (R$_i$) applied to $\tu$, there
exists a vertex $\tw''' \sim \tu,\tw,\ty$ with $f_i(\tw''')=w'''$. If
$w''' \nsim u$, by (S$_i$) applied to the square $wuyw'''$, there
exists $\tw''' \sim \tw,\ty$ with $f_i(\tw''')=w'''$.  Since $w'''\sim
z$, by ($A_2$), we have that $\tw'''\in \tS_i$, and by (T$_i$) and
($A_1$), we have $(\tw''',z)\in Z$.  Thus, if $\tw ''' \nsim \tw'$, by
the preceding case (i) (since, by the triangle condition, there is a vertex $\tx \in \tB_{i-2}$ adjacent to $\ty$ and $\tu'$) we have that $(\tw''',z)\equiv (\tw'',z)$.  If
$\tw ''' \sim \tw'$, we get the same conclusion by applying Case
(Z1)(Z2).  Since $\tw\sim \tw'''$, we have $(\tw,z)\equiv (\tw''',z)$
by condition (Z1). Hence, we obtain that $(\tw,z)\equiv (\tw'',z)$ by
applying one of the cases (Z1)(Z1), (Z1)(Z2), or (Z1)(Z3) to the
triplet $(\tw,z),(\tw''',z),(\tw'',z)$. This finishes the proof in
Case (Z2)(Z2).

\medskip\noindent
{\bf Case (Z2)(Z3):} There exists $\tu\in \widetilde{B}_{i-1}$ adjacent in
$\tG_i$ to $\tw,\tw'$, such that $wuw'z$ is a square, and there exist $\tu',\tu''\in \tS_{i}$ adjacent in
$\tG_i$ to $\tw',\tw''$ and such that the vertices
$u',u'',w',w'',z$ induce a pyramid in $G$.

\medskip\noindent By (T$_i$), $u \notin \{u',u'',w''\}$ and, by (R$_i$) $w \notin
\{u',u''\}$. If $w = w''$, then by the interval condition for $I(w,w')$,
we have $u \sim u', u''$. Consequently, by (T$_i$) applied to $\tw'$ and to
$\tu'$, $\tu \sim \tu'$ and $\tu \sim \tw''$,
which contradicts ($A_5$). Thus $w\neq w''$. Moreover, $w \nsim w'$, $w'\nsim w''$,
$u'\nsim u''$, and $u\nsim z$.  Hence, by (T$_i$) and ($A_1$), we have $(\tu',z) \in Z$.  Applying Case (Z1)(Z2) to the triplet
$(\tu',z),(\tw',z),(\tw,z)$, we obtain that $(\tw,z)\equiv (\tu',z)$.
By applying one of the cases (Z1)(Z1), (Z1)(Z2), or (Z1)(Z3) to the
triplet $(\tw'',z),(\tu',z),(\tw,z)$, we conclude that $(\tw,z)\equiv
(\tw'',z)$. This finishes the proof in Case (Z2)(Z3).

\medskip\noindent
{\bf Case (Z3)(Z3):} There exist $\ty',\ty''\in \tS_i$ adjacent in
$\widetilde{G}_i$ to $\tw,\tw'$ and such that the vertices
$y',y'',w,w',z$ induce a pyramid in $G$, and there exist $\tu',\tu''\in \widetilde{S}_{i}$ adjacent in
$\widetilde{G}_i$ to $\tw',\tw''$ and such that the vertices
$u',u'',w',w'',z$ induce a pyramid in $G$.

\medskip\noindent Again, we first notice that $w\neq w', w'\neq w''$,
$w\nsim w',$ and $w'\nsim w''$. Similarly $y'\ne y'', u'\ne u''$,
$y'\nsim y'',$ and $u'\nsim u''$. By (T$_i$) and ($A_1$), we have $(\tu,z),
(\tu',z), (\ty,z), (\ty',z) \in Z$.  First suppose that one of the
vertices $y',y''$ coincides with one of the vertices $u',u''$, say
$y'=u'$. Then, by (T$_i$) applied to $\tw'$, we have $\ty'=\tu'$ and
$(\tw,z)\equiv (\tu',z)$ and $(\tu',z)\equiv (\tw'',z)$ by
(Z1). Consequently, Case (Z1)(Z1) implies that $(\tw,z)\equiv
(\tw'',z)$. Thus suppose that the vertices $y',y''$ and $u',u''$ are
pairwise distinct. By Case (Z1)(Z3) applied to the triplet
$(\tu',z),(\tw',z),(\tw,z)$ we deduce that $(\tw,z)\equiv
(\tu',z)$. Then applying one of the cases (Z1)(Z1), (Z1)(Z2), or
(Z1)(Z3) to the triplet $(\tw'',z),(\tu',z),(\tw,z)$, we obtain that
$(\tw,z)\equiv (\tw'',z)$. This finishes the proof of the Case
(Z3)(Z3) and completes the proof that $\equiv$ is an equivalence
relation on $Z$.
\end{proof}

Let $\widetilde{S}_{i+1}$ denote the set of equivalence classes of $\equiv$, i.e.,
$\widetilde{S}_{i+1}=Z/_{\equiv}$. For a couple $(\widetilde{w},z)\in Z$,
we will denote by $[\widetilde{w},z]$ the equivalence class of $\equiv$
containing  $(\widetilde{w},z)$. Set $\widetilde{B}_{i+1}:=\widetilde{B}_i\cup
\widetilde{S}_{i+1}$.
Let $\widetilde{G}_{i+1}$ be the graph having  $\widetilde{B}_{i+1}$ as the
vertex set in which two vertices $\widetilde{a},\widetilde{b}$ are adjacent if
and
only if one of the following conditions holds:
\begin{itemize}
\item[(1)] $\widetilde{a},\widetilde{b}\in \widetilde{B}_i$ and
$\widetilde{a}\widetilde{b}$ is an edge of $\widetilde{G}_i$,
\item[(2)] $\widetilde{a}\in \widetilde{B}_i$,  $\widetilde{b}\in
\widetilde{S}_{i+1}$ and $\widetilde{b}=[\widetilde{a},z]$,
\item[(3)] $\widetilde{a},\widetilde{b}\in \widetilde{S}_{i+1},$
$\widetilde{a}=[\widetilde{w},z]$,
$\widetilde{b}=[\widetilde{w},z']$ for a vertex $\widetilde{w}\in \tB_i,$ and
$z\sim z'$ in the graph $G$.
\end{itemize}

Finally, we define the map $f_{i+1}: \widetilde{B}_{i+1}\rightarrow V(G)$ in
the
following way: if $\widetilde{a}\in \widetilde{B}_i$, then
$f_{i+1}(\widetilde{a})=f_i(\widetilde{a}),$  otherwise, if $\widetilde{a}\in
\widetilde{S}_{i+1}$ and
$\widetilde{a}=[\widetilde{w},z],$ then $f_{i+1}(\widetilde{a})=z$. Notice that
$f_{i+1}$ is well-defined because all couples from the equivalence
class representing $\widetilde{a}$ have one and the same vertex $z$ in the
second argument. In the sequel we follow our earlier convention for notations: all vertices of $\widetilde{B}_{i+1}$ will
be denoted with the tilde and their images in $G$ under $f_{i+1}$ will be denoted
without a tilde, e.g.\ if $\widetilde{w}\in \widetilde{B}_{i+1},$
then $w=f_{i+1}(\widetilde{w})$.

\subsection{Properties of $\tG_{i+1}$ and $f_{i+1}$.}
\label{s:propG}
In this subsection we check our inductive assumptions, verifying the properties (P$_{i+1}$) through (U$_{i+1}$) for $\tG_{i+1}$ and $f_{i+1}$ defined above. In particular it allows us to define the corresponding complex $\tX_{i+1}$.
\medskip

\begin{lemma}[Property (P$_{i+1}$)] \label{Pi+1}  The graph $\tG_{i+1}$ satisfies the property $(P_{i+1})$,
i.e.,
$B_j(v,\tG_{i+1})=\widetilde{B}_j$ for any $j\le i+1$.
\end{lemma}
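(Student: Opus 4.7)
The plan is to exploit the fact that, by the construction rules (1), (2), and (3) for adjacency in $\tG_{i+1}$, each new vertex $\ta=[\tw,z]\in\tS_{i+1}$ has its neighbors confined to $\tS_i\cup\tS_{i+1}$. Indeed, rule (1) produces no edge incident to $\tS_{i+1}$; rule (2) joins $\ta$ only to those vertices $\tw'$ for which $(\tw',z)\equiv(\tw,z)$, and since by definition $Z$ only contains pairs whose first coordinate lies in $\tS_i$, every such $\tw'$ belongs to $\tS_i$; rule (3) produces edges entirely inside $\tS_{i+1}$. In particular, no vertex of $\tS_{i+1}$ has a neighbor in $\tB_{i-1}$.

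From this description of neighborhoods, I would first deduce that $d_{\tG_{i+1}}(\tv,\ta)=i+1$ for every $\ta\in\tS_{i+1}$. For the upper bound, choose any representative $(\tw,z)\in\ta$: rule (2) gives $\tw\sim\ta$, and rule (1) together with $(P_i)$ yields $d_{\tG_{i+1}}(\tv,\tw)\leq d_{\tG_i}(\tv,\tw)=i$. For the lower bound, given any $\tG_{i+1}$-path $\tv=\ta_0\sim\ta_1\sim\cdots\sim\ta_k=\ta$, let $m$ be the smallest index with $\ta_m\in\tS_{i+1}$. Then $\ta_{m-1}\in\tB_i$ is adjacent in $\tG_{i+1}$ to a vertex of $\tS_{i+1}$, so the neighborhood analysis above forces $\ta_{m-1}\in\tS_i$. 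The prefix $\ta_0,\ldots,\ta_{m-1}$ is contained in $\tB_i$ and is therefore a path in $\tG_i$ (again by rule (1)); by $(P_i)$ it has length at least $i$, so $k\geq m\geq i+1$.

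With this distance computation in hand, the rest is bookkeeping. For $j\leq i$, any $\tG_{i+1}$-path from $\tv$ of length at most $j$ cannot visit $\tS_{i+1}$ (as that would force length at least $i+1$), so it lies entirely in $\tG_i$, whence $B_j(\tv,\tG_{i+1})=B_j(\tv,\tG_i)=\tB_j$ by $(P_i)$. For $j=i+1$, the ball $B_{i+1}(\tv,\tG_{i+1})$ is the union of $\tB_i$ with the set of vertices at distance exactly $i+1$; by the preceding paragraph this equals $\tB_i\cup\tS_{i+1}=\tB_{i+1}$.

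The only delicate point I anticipate is the neighborhood analysis itself: one must carefully unpack the three adjacency rules of Subsection~\ref{s:def-i+1} and use that every pair in $Z$ has its first coordinate in $\tS_i$ (immediate from the definition of $Z$) in order to rule out edges from $\tS_{i+1}$ into $\tB_{i-1}$. Once this is settled, the lemma follows from a clean induction on $j$, without any further appeal to the more intricate combinatorics of the relation $\equiv$.
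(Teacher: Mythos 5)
Your proof is correct and follows essentially the same route as the paper's: both argue that vertices of $\tS_{i+1}$ have neighbors only in $\tS_i\cup\tS_{i+1}$ (unpacking the three adjacency rules and the fact that pairs in $Z$ have first coordinate in $\tS_i$), deduce that distances within $\tB_i$ are unchanged, and conclude that $\tS_{i+1}$ vertices lie at distance exactly $i+1$ from $\tv$. Your version merely spells out the lower-bound argument (via the first entry into $\tS_{i+1}$ along a path) more explicitly than the paper does.
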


\begin{proof} By the definition of edges of $\widetilde{G}_{i+1},$ any vertex
$\widetilde{b}$ of $\widetilde{S}_{i+1}$ is adjacent
to at least one vertex of $\tB_i$ and all such neighbors of $\widetilde{b}$ are
vertices of the form $\widetilde{w}\in \widetilde{B}_i$
such that $\widetilde{b}=[\widetilde{w},z]$ for a couple $(\widetilde{w},z)$ of
$Z$. By the definition of $Z,$ $\widetilde{w}\in \tS_i,$
whence any vertex of $\tS_{i+1}$ is adjacent only to vertices of $\tS_i$ and
$\tS_{i+1}$. Therefore, the distance between
the basepoint $\widetilde{v}$ and any vertex $\widetilde{a}\in \tB_i$ is the
same in the graphs $\tG_i$ and $\tG_{i+1}$. On the
other hand, the distance in  $\tG_{i+1}$ between $\widetilde{v}$ and any vertex
$\widetilde{b}$ of $\tS_{i+1}$ is $i+1$. This
shows that indeed $B_j(v,\tG_{i+1})=\widetilde{B}_j$ for any $j\le i+1$.
\end{proof}

\begin{lemma}[Property (Q$_{i+1}$)] \label{Qi+1}  The graph $\widetilde{G}_{i+1}$ satisfies the property
$(Q_{i+1})$, i.e.\ $\widetilde{G}_{i+1}$
satisfies the triangle and the square-pyramid conditions with respect to $\tv$.
\end{lemma}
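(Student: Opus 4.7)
The plan is to separate both conditions TC$(\tv)$ and SPC$(\tv)$ into an ``old'' case, in which all participating vertices already lie in $\tB_i$, and a ``new'' case, in which at least one participating vertex lies in $\tS_{i+1}$. The old cases will be an immediate appeal to (Q$_i$) (using that edges among vertices of $\tB_i$ are unchanged in $\tG_{i+1}$, together with the distance Lemma~\ref{Pi+1}). The new cases will be translated into statements about the equivalence relation $\equiv$ on $Z$ and settled by the case analysis (Z1)(Z2)(Z3) used in its very definition.

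For TC, I would take adjacent $\ta,\tb$ with $d(\tv,\ta)=d(\tv,\tb)=k\ge 2$. If $k\le i$ both lie in $\tB_i$ and (Q$_i$) applies. If $k=i+1$ then $\ta,\tb\in\tS_{i+1}$ and the edge between them must be of type~(3) in the definition of $\tG_{i+1}$; that is, $\ta=[\tw,z]$ and $\tb=[\tw,z']$ for a common $\tw\in\tS_i$. Then $\tw$ is the desired vertex: it is adjacent to both $\ta$ and $\tb$ in $\tG_{i+1}$ and satisfies $d(\tv,\tw)=i=k-1$.

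For SPC, take $\tu,\tw,\tw'$ with $\tu\sim\tw,\tw'$, $d(\tw,\tw')=2$ and $d(\tv,\tu)=k+1$. Again the case $k+1\le i$ is immediate from (Q$_i$), so the heart of the argument is $k+1=i+1$, in which $\tu\in\tS_{i+1}$ and $\tw,\tw'\in\tS_i$. Here the edges $\tu\sim\tw,\tw'$ are of type~(2), so $\tu=[\tw,u]=[\tw',u]$, i.e.\ $(\tw,u)\equiv(\tw',u)$ in $Z$; note that $\tw\nsim\tw'$ in $\tG_i$ since $d_{\tG_{i+1}}(\tw,\tw')=2$ and edges among $\tB_i$ are unchanged. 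Hence (Z1) is excluded. If (Z2) holds we obtain $\tu_0\in\tB_{i-1}$ adjacent to $\tw,\tw'$; since $\tu_0\sim\tw$ forces $d(\tv,\tu_0)\ge i-1$ and $\tu_0\in\tB_{i-1}$ forces $d(\tv,\tu_0)\le i-1$, we get $d(\tv,\tu_0)=k-1$, yielding the first alternative of SPC.

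The main obstacle, and the only case requiring an extra argument, is (Z3): there is a square $\tw\ty\tw'\ty'$ contained in $\tS_i$ whose $f_i$-image together with $u$ forms a pyramid in $G$. Then $\ty,\ty'$ are common neighbors of $\tw,\tw'$ in $\tG_i\subset\tG_{i+1}$, lie at distance $i=k$ from $\tv$, and satisfy $\ty\nsim\ty'$. To conclude the second alternative of SPC I must show $\ty,\ty'\sim\tu$ in $\tG_{i+1}$, i.e.\ $[\ty,u]=[\ty',u]=\tu$. Once the couples $(\ty,u)$ and $(\ty',u)$ are shown to lie in $Z$, the equivalences $(\ty,u)\equiv(\tw,u)$ and $(\ty',u)\equiv(\tw,u)$ follow from (Z1) since $\ty\sim\tw$ and $\ty'\sim\tw$ in $\tG_i$. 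To see, for instance, that $(\ty,u)\in Z$, I argue by contradiction: if some $\tz\sim\ty$ in $\tG_i$ satisfied $f_i(\tz)=u$, then $\tz\nsim\tw$ (else $(\tw,u)\notin Z$), and applying (T$_i$) to $\ty\in\tS_i$ (an isomorphism between $B_1(\ty,\tG_i)$ and its image) would force $u\nsim w$ in $G$, contradicting the pyramid. The same argument handles $\ty'$, completing the verification of SPC and hence of (Q$_{i+1}$).
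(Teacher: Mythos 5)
Your proposal is correct and follows essentially the same route as the paper: reduce via (P$_{i+1}$) and (Q$_i$) to the case where the relevant vertex is new (lies in $\tS_{i+1}$), read TC off the type-(3) edge structure, and read SPC off the case analysis (Z1)/(Z2)/(Z3) underlying the equivalence $\equiv$. The only noticeable difference is that you spell out the verification that $(\ty,u)$ and $(\ty',u)$ lie in $Z$ in the (Z3) case (via (T$_i$) applied to $\ty$), which the paper compresses into a terse ``by (R$_i$) and (T$_i$)''; the logic is the same.
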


\begin{proof} First we show that  $\widetilde{G}_{i+1}$ satisfies the triangle
condition TC($\widetilde{v}$).
Pick two adjacent vertices $\tu,\tw$ having in
$\widetilde{G}_{i+1}$ the same distance to $\widetilde{v}$.
Since by Lemma \ref{Pi+1}, $\widetilde{G}_{i+1}$ satisfies the property
(P$_{i+1}$) and, by (Q$_i$) the graph $\tG_i$ satisfies the triangle condition with respect to $\tv$, we can suppose that
$\tu,\tw \in \tS_{i+1}$. From the definition of the edges
of $\tG_{i+1}$, there exist two couples $(\tx,z),(\tx,z')\in
Z$ such that $\tx\in \tB_i,$ $z$ is
adjacent to $z'$ in $G,$ and
$\tu=[\widetilde{x},z],\widetilde{w}=[\widetilde{x},z']$. Since
$\widetilde{x}$ is
adjacent in $\tG_{i+1}$ to both $\widetilde{u}$ and $\widetilde{w},$ the
triangle condition TC($\widetilde{v}$) is established.

Now we establish the square-pyramid condition SPC($\tv$).
Again, by Lemma \ref{Pi+1} and by (Q$_i$) for $\tG_i$, it is enough to consider the situation when $\tu \in \tS_{i+1}$ and $\tu$ is adjacent to mutually non-adjacent $\tw,\tw' \in \tS_i$. By the definition of $\tS_{i+1}$ we have that $\tu=[\tw,z]=[\tw',z]$.
Thus $(\tw,z)\equiv (\tw',z)$ and, by the definition of the equivalence relation $\equiv$, we have the following two cases.
\medskip

\noindent
\emph{Case 1:} There exists a vertex $\tx\in \tB_{i-1}$ adjacent to $\tw,\tw'$. Then we obtain the square $\tu\tw\tx\tw'$ as required by the square-pyramid condition with respect to $\tv$.

\medskip

\noindent
\emph{Case 2:} There exists a square $\tw\tx\tw'\tx'$ in $\tS_i$ such
that vertices $w,x,w',x',z$ induce a pyramid in $G$. Observe that by
(R$_i$) and (T$_i$), we obtain $(\tx,z), (\tx',z) \in Z$, and by the definition
of $\tS_{i+1}$ we have $\tu=[\tw,z]=[\tx,z]=[\tx',z]$ and thus $\tu
\sim \tx,\tx'$. Moreover, by (T$_i$) applied to $\tw$, we obtain that
$\tx \nsim \tx'$. Hence the square-pyramid condition is verified.
\end{proof}

Now we establish some properties of the map $f_{i+1}$.
We first prove that the mapping $f_{i+1}$ is a graph homomorphism
(preserving edges) from
$\tG_{i+1}$ to $G$. In particular, this implies that two adjacent vertices of
$\tG_{i+1}$ are mapped in $G$ to
different vertices.

\begin{lemma}\label{lem-homomorphism}
$f_{i+1}$ is a graph homomorphism from $\tG_{i+1}$ to $G$, i.e., for any edge
$\ta\tb$ of $\tG_{i+1}$, $ab$ is an edge of $G$.
\end{lemma}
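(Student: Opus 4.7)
The plan is to proceed by a direct case analysis mirroring the three-part definition of adjacency in $\tG_{i+1}$ given in Subsection~\ref{s:def-i+1}. In each case, an edge $\ta\tb$ of $\tG_{i+1}$ will automatically map under $f_{i+1}$ to an edge $ab$ of $G$, essentially by construction; distinctness $a\neq b$ will be a free by-product. No step here is a genuine obstacle: this lemma is a short consistency check, and I expect the real work to come with the subsequent properties (R$_{i+1}$), (S$_{i+1}$), (T$_{i+1}$), and (U$_{i+1}$).

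For edges of type (1), both $\ta$ and $\tb$ lie in $\tB_i$ and $f_{i+1}$ agrees with $f_i$ on them. Since $\tB_i=\tB_{i-1}\cup\tS_i$, at least one endpoint lies in $\tB_{i-1}$ or both lie in $\tS_i$. In the first subcase I would apply the inductive property (R$_i$) to that endpoint to conclude that $f_i$ is a graph isomorphism on its $1$--ball, hence sends the edge $\ta\tb$ to an edge of $G$. In the second subcase, I would invoke (T$_i$) at either endpoint to the same effect.

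For edges of type (2), where $\tb=[\ta,z]\in\tS_{i+1}$ with $\ta\in\tB_i$, the definition of $Z$ forces $\ta\in\tS_i$ and $z\in B_1(a,G)\setminus f_i(B_1(\ta,\tG_i))$. Since $a=f_i(\ta)$ belongs to $f_i(B_1(\ta,\tG_i))$, we have $z\neq a$, and hence $z$ is a (proper) neighbor of $a$ in $G$. Thus $f_{i+1}(\ta)f_{i+1}(\tb)=az$ is an edge of $G$.

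For edges of type (3), where $\ta=[\tw,z]$ and $\tb=[\tw,z']$ with $z\sim z'$ in $G$, one has $f_{i+1}(\ta)f_{i+1}(\tb)=zz'$ directly by the definition of $f_{i+1}$, and this is an edge of $G$ by hypothesis. Combining these three observations completes the argument; the only point worth flagging is that in Case~(1) the inductive hypotheses (R$_i$) and (T$_i$) exhaust all possible locations of the endpoints $\ta,\tb$ within $\tB_i$.
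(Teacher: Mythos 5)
Your proof is correct and follows essentially the same route as the paper: split according to the three clauses defining adjacency in $\tG_{i+1}$, handle type (1) by (R$_i$) or (T$_i$), and read off types (2) and (3) directly from the definitions of $Z$ and of the equivalence classes.
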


\begin{proof}
Consider an edge $\ta\tb$ of $\tG_{i+1}$. If $\ta,\tb \in \tB_i$, the
lemma holds by (R$_i$) or (T$_i$)  applied to $\ta$. Suppose that $\ta \in
\tS_{i+1}$. If $\tb \in \tB_i$, then $\ta = [\tb,a]$, and $ab$ is an
edge of $G$. If $\tb \in \tB_{i+1}$, then the fact that $\ta$ and $\tb$ are
adjacent  implies that there
exists a vertex $\tw \in \tB_i$ such that $\ta = [\tw,a], \tb= [\tw,b]$, and
such that $a \sim b$ in $G$.
\end{proof}
We now prove that $f_{i+1}$ is locally surjective at any vertex in
$\tB_i$.

\begin{lemma}\label{lem-locally-surjective}
If $\ta \in \tB_i$ and if $b \sim a$ in $G,$ then there exists
a vertex $\tb$ of $\tG_{i+1}$  adjacent to $\ta$ such that $f_{i+1}(\tb) = b$.
\end{lemma}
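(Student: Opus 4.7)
The plan is to split on whether $\ta$ lies in the interior ball $\tB_{i-1}$ or on the sphere $\tS_i$, and in the latter case further split on whether $b$ is already in the image $f_i(B_1(\ta,\tG_i))$ or not.

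If $\ta\in\tB_{i-1}$, I would invoke condition (R$_i$) directly: it says $f_i$ restricts to an isomorphism from the subgraph of $\tG_i$ induced by $B_1(\ta,\tG_i)$ onto the subgraph of $G$ induced by $B_1(a,G)$. Since $b\sim a$ in $G$, the preimage $\tb\in B_1(\ta,\tG_i)\setminus\{\ta\}$ of $b$ exists, is adjacent to $\ta$ in $\tG_i$, and hence is adjacent to $\ta$ in $\tG_{i+1}$ by the edge rule (1); moreover $f_{i+1}(\tb)=f_i(\tb)=b$.

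If $\ta\in\tS_i$, I would use condition (T$_i$) instead: it provides an isomorphism between $B_1(\ta,\tG_i)$ and its image $f_i(B_1(\ta,\tG_i))$, but the latter may be a proper subset of $B_1(a,G)$. So I distinguish two subcases. First, if $b\in f_i(B_1(\ta,\tG_i))$, then (T$_i$) produces a unique $\tb\in B_1(\ta,\tG_i)$ with $f_i(\tb)=b$, and $\tb\sim\ta$ in $\tG_i$, hence in $\tG_{i+1}$. Second, if $b\notin f_i(B_1(\ta,\tG_i))$, then by definition of $Z$ the pair $(\ta,b)$ belongs to $Z$; setting $\tb:=[\ta,b]\in\tS_{i+1}$, the edge rule (2) gives $\tb\sim\ta$ in $\tG_{i+1}$, and by the very definition of $f_{i+1}$ on $\tS_{i+1}$ we have $f_{i+1}(\tb)=b$.

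There is no real obstacle here since each case is essentially a direct application of a condition that was just set up: (R$_i$) was designed for the interior case, (T$_i$) handles the sphere-to-already-seen case, and the construction of $\tS_{i+1}$ via equivalence classes of $Z$ was precisely engineered to handle the sphere-to-new case. The only point to be careful about is to verify that the various invocations yield the correct edge in $\tG_{i+1}$ (rules (1) or (2)) and that $f_{i+1}$ indeed sends the chosen $\tb$ to $b$, which is immediate from the definitions given at the end of Subsection~\ref{s:def-i+1}.
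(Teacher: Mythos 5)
Your proof is correct and follows essentially the same route as the paper: case $\ta\in\tB_{i-1}$ via (R$_i$), and for $\ta\in\tS_i$ either the neighbor already exists in $\tB_i$ (you invoke (T$_i$), the paper phrases it as ``$\ta$ has a neighbor $\tb\in\tB_i$ mapped to $b$'') or one takes $\tb=[\ta,b]\in\tS_{i+1}$.
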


\begin{proof}
If $\ta \in \tB_{i-1}$, the lemma holds by (R$_i$). Suppose that $\ta
\in \tS_i$ and consider $b \sim a$ in $G$. If $\ta$ has a
neighbor $\tb \in \tB_i$ mapped to $b$ by $f_i$, we are
done. Otherwise $(\ta,b) \in Z$, $[\ta,b]\sim \ta $ in
$\tG_{i+1}$ and $[\ta,b]$ is mapped to $b$ by $f_{i+1}$.
\end{proof}

We now prove that $f_{i+1}$ is locally injective.

\begin{lemma}\label{lem-locally-injective}
If $\ta \in \tB_{i+1}$ and  $\tb, \tc$ are distinct neighbors of
$\ta$ in $\tG_{i+1}$, then $b \neq c$.
\end{lemma}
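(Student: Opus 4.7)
The plan is to split the argument according to the location of $\ta$ among the sets $\tB_{i-1}$, $\tS_i$, and $\tS_{i+1}$. When $\ta \in \tB_{i-1}$, every neighbor of $\ta$ in $\tG_{i+1}$ already lies in $\tB_i$ (since $\tS_{i+1}$ is at graph-distance $i+1$ from $\tv$), so (R$_i$) immediately yields $b \ne c$. When $\ta \in \tS_i$, any two neighbors of $\ta$ in $\tB_i$ have distinct images by (T$_i$); any neighbor of $\ta$ in $\tS_{i+1}$ has the form $[\ta,z]$ with $z \notin f_i(B_1(\ta,\tG_i))$, which both separates it from any $\tB_i$-image and, for two such neighbors $[\ta,z],[\ta,z']$, forces $z \ne z'$ (otherwise they represent the same equivalence class and $\tb = \tc$).

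The substantive case is $\ta \in \tS_{i+1}$, so $\ta = [\tw,a]$ for some $\tw \in \tS_i$. By the edge rules, every neighbor of $\ta$ in $\tB_i$ lies in $\tS_i$ and represents $\ta$, i.e.\ satisfies $(\tb,a)\equiv(\tw,a)$, while every neighbor of $\ta$ in $\tS_{i+1}$ has the form $[\widetilde{w}',c]$ with $(\widetilde{w}',a)\equiv(\tw,a)$ and $c\sim a$ in $G$. When both $\tb,\tc\in \tB_i$, the plan is to use $(\tb,a)\equiv(\tc,a)$ and go through (Z1)--(Z3): in (Z1), $\tb,\tc$ are equal or adjacent in $\tG_i$ and (T$_i$) gives $b \ne c$; in (Z2), some $\tu \in \tB_{i-1}$ is adjacent in $\tG_i$ to both $\tb,\tc$, so (R$_i$) applied to $\tu$ gives $b \ne c$; in (Z3), two vertices of the witnessing square in $\tS_i$ are common neighbors of $\tb,\tc$, and (T$_i$) applied to any of them gives $b \ne c$.

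In the mixed subcase $\tb \in \tB_i$ and $\tc = [\widetilde{w}',c'] \in \tS_{i+1}$, I will suppose $b = c'$ and derive a contradiction. Since $(\widetilde{w}',c') \in Z$ forces $c' \sim w'$ in $G$, the assumption gives $b \sim w'$. Unfolding $(\tb,a)\equiv(\widetilde{w}',a)$: in (Z1), $\tb$ is equal or adjacent to $\widetilde{w}'$ in $\tG_i$, putting $b \in f_i(B_1(\widetilde{w}',\tG_i))$, which contradicts $(\widetilde{w}',c') \in Z$; in (Z2), the square $ubaw'$ in $G$ forces $b \nsim w'$, contradicting $b\sim w'$; in (Z3), $\tb$ and $\widetilde{w}'$ sit at opposite corners of the witnessing square, so their images are non-adjacent in the resulting pyramid, again contradicting $b \sim w'$.

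In the last subcase both $\tb = [\tw_1,b']$ and $\tc = [\tw_2,c']$ lie in $\tS_{i+1}$, with $\tw_1,\tw_2$ both representing $\ta$. I again suppose $b' = c' = d$ and analyse $(\tw_1,a)\equiv(\tw_2,a)$. Under (Z1), either $\tw_1 = \tw_2$ (forcing $\tb = \tc$) or $\tw_1 \sim \tw_2$ in $\tG_i$, in which case (Z1) applies directly to the pairs $(\tw_1,d),(\tw_2,d) \in Z$ and yields $(\tw_1,d)\equiv(\tw_2,d)$, again forcing $\tb = \tc$, contrary to $\tb \ne \tc$. Under (Z2), $u,a,d$ are three pairwise distinct common neighbors of $w_1,w_2$ in $G$ (the key input $u \nsim d$ follows from (R$_i$) applied to $\tu$ using $(\tw_1,d) \in Z$), with adjacency pattern $u \nsim a$, $u \nsim d$, $a \sim d$; a direct check shows this pattern is incompatible with any of the square, pyramid, or octahedron structures allowed by the interval condition. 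Under (Z3), $a,x,x',d$ are four distinct common neighbors of $w_1,w_2$, so $I(w_1,w_2)$ must be an octahedron whose four vertices form a $4$-cycle, but $a$ is adjacent to $x,x',d$ simultaneously, impossible in a $4$-cycle. The main obstacle is precisely this final subcase, where the contradiction is extracted from a careful common-neighbor count against the interval condition on $G$; everything else is routine unfolding of the definition of $\equiv$ together with invocations of (R$_i$) and (T$_i$).
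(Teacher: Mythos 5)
Your proposal is correct and follows the same overall case decomposition as the paper (by whether $\ta$ lies in $\tB_{i-1}$, $\tS_i$, or $\tS_{i+1}$, and then by the positions of $\tb,\tc$); the routine cases are handled in the same way, using (R$_i$)/(T$_i$) and the structure of $\equiv$. The one place you genuinely diverge is the hardest subcase $\ta,\tb,\tc\in\tS_{i+1}$ under (Z2) and (Z3). There the paper does not argue that the configuration in $G$ is impossible; instead it exhibits a new witness for the relation $\equiv$ between $(\tw,b)$ and $(\tw',b)$ --- the square $wbw'x$ in (Z2), the pyramid $w,x,w',x',b$ in (Z3) (after deducing $b\sim x,x'$ from the octahedron structure of $I(w,w')$) --- and concludes $\tb=[\tw,b]=[\tw',b]=\tc$, contradicting $\tb\neq\tc$. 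You instead count common neighbours of $w_1,w_2$ (for (Z2): $u,a,d$ with degree sequence $0,1,1$ among the middle vertices; for (Z3): $a$ adjacent to all of $x,x',d$) and show this adjacency pattern cannot occur in a square, pyramid, or octahedron, giving a direct violation of the interval condition in $G$. Both arguments are valid, and indeed the paper's derivation of $b\sim x,x'$ is really the same octahedron count in disguise; your version is arguably more elementary (it never re-enters the machinery of $\equiv$), while the paper's fits more naturally into the ``show $\tb=\tc$'' template used throughout its other subcases. One small citation slip: the input $u\nsim d$ is exactly property ($A_2$) of Lemma~\ref{l:a1a2a3} rather than a bare use of (R$_i$), though of course ($A_2$) is proved via (R$_i$).
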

\begin{proof}
  If $\ta \in \tB_{i-1}$, then the assertion of the lemma follows directly from the condition (R$_i$) applied to $\ta$. Further we proceed by contradiction, i.e., we assume that $b=c$.

  Suppose now that $\ta \in \tS_i$.
  If $\tb$ or $\tc$ is in $\tB_i$, then $(\ta,b)$ is not in $Z$. Thus, if say \ $\tb \in \tB_i$, then $\tc$ is not in $\tS_{i+1}$, otherwise we would have $\tc=[\ta,b]$.
  If $\tb,\tc \in \tB_i$, then we get a contradiction with (T$_i$) applied to $\ta$. If $\tb,\tc  \in \tS_{i+1}$, then, by the definition of vertices in $\tS_{i+1}$, we have $\tb=[\ta,b]=\tc$, contradicting the choice of $\tb,\tc$.

  Thus further we assume that $\ta \in \tS_{i+1}$. If $\tb,\tc \in \tB_i$, then $\ta=[\tb,a]=[\tc,a]$. By Lemma \ref{lem-homomorphism} we have that $\tb \nsim \tc$. Since $(\tb,a)\equiv (\tc,a)$, by the definition of the relation $\equiv$, there is a vertex $\tx \in \tB_i$ adjacent to $\tb$ and $\tc$. Then we get a contradiction by (R$_i$) or (T$_i$) applied to $\tx$. Now we suppose that $\tb \in \tS_{i+1}$ and $\tc \in \tB_i$. By the definition of the edge $\ta\tb$ there is a vertex $\td \in \tS_i$ adjacent to $\ta$ and $\tb$. Observe that $\td \nsim \tc$ since otherwise we would get a contradiction with $(\td,b)\in Z$.
  Since $(\td,a)\equiv (\tc,a)$, by the definition of the relation $\equiv$, we are either in the case (Z2) or in the case (Z3).
  Observe however that this is not possible since $f_{i+1}(\td)\sim f_{i+1}(\tc)=b$. Consequently, we obtain that it is not possible that $\tb \in \tS_{i+1}$ and $\tc \in \tB_i$.

  For the remaining part of the proof we thus suppose that
  $\ta,\tb,\tc \in \tS_{i+1}$. By the definitions of edges $\tb\ta$
  and $\ta\tc$ there exist vertices $\tw,\tw' \in \tS_i$ with $\tw
  \sim \tb,\ta$ and $\tw' \sim \ta,\tc$. Observe that $\tw \nsim \tw'$
  since otherwise we would have $\tb = [\tw,b] = [\tw',b]=\tc$. Since
  $(\tw,a)\equiv (\tw',a)$, by the definition of $\equiv$, we are in
  the case (Z2) or (Z3). For (Z2) there is a vertex $\tx \in
  \tB_{i-1}$ such that $waw'x$ is a square in $G$. By ($A_1$), $b \neq
  x$ and then $wbw'x$ is also a square in $G$; consequently, by (Z2),
  $\tb=[\tw,b] = [\tw',b] =\tc$. Thus we are in the case (Z3), i.e.,
  there exist vertices $\tx,\tx' \in \tS_i$ adjacent to $\tw,\tw'$,
  mutually non-adjacent and such that $w,w',a,x,x'$ induce a pyramid
  in $G$.  Observe that, by ($A_1$), we have $b \notin \{x,x'\}$, and
  that $b$ lies in the interval $I(w,w')$ in
  $G$. Consequently, $b \sim x,x'$, and by (Z3), $\tb
  =[\tw,b]=[\tw',b]=\tc$.
\end{proof}

Before proving the next result, we formulate two technical
lemmas.

\begin{lemma}\label{lem-aux}
If $a,b,c$ are distinct pairwise adjacent vertices of $G$ such that $\ta=[\tb,a]\in \tS_{i+1},\tb,\tc\in \tB_i$ and $\tb\sim \tc$ in $\tG_{i+1},$
then $(\tc,a)\in Z$ and, in particular, $[\tc,a]=\ta\sim \tc$.
\end{lemma}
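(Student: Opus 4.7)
The plan is to verify, in order, that $\tc\in\tS_i$, that $(\tc,a)\in Z$, and finally that $[\tc,a]=\ta$ and $\ta\sim\tc$.

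First I would note that because $\ta=[\tb,a]\in\tS_{i+1}$, the pair $(\tb,a)$ belongs to $Z$; in particular, by definition of $Z$, we have $\tb\in\tS_i$. Since $\tb,\tc\in\tB_i$ and $\tb\sim\tc$ in $\tG_{i+1}$, edge-type (1) in the definition of $\tG_{i+1}$ shows that $\tb\sim\tc$ already in $\tG_i$. To see $\tc\in\tS_i$, I would argue by contradiction: if $\tc\in\tB_{i-1}$, then $\tc$ would be a neighbor of $\tb$ inside $\tB_{i-1}$ whose image $c=f_i(\tc)$ is adjacent to $a$ in $G$, directly contradicting property ($A_2$) of Lemma~\ref{l:a1a2a3} applied to $(\tb,a)$.

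To show $(\tc,a)\in Z$, it suffices to verify $a\in B_1(c,G)\setminus f_i(B_1(\tc,\tG_i))$; the first membership is immediate from $a\sim c$ in $G$. For the second, I would suppose toward a contradiction that some $\tz\in B_1(\tc,\tG_i)$ satisfies $f_i(\tz)=a$. Since $a\neq c$ we cannot have $\tz=\tc$, hence $\tz\sim\tc$ in $\tG_i$. Now property (T$_i$) applied at $\tc\in\tS_i$ gives an isomorphism between the subgraph of $\tG_i$ induced by $B_1(\tc,\tG_i)$ and its image in $G$; because $\tb$ and $\tz$ both lie in $B_1(\tc,\tG_i)$ and their images $b$ and $a$ are adjacent in $G$, this isomorphism forces $\tz\sim\tb$ in $\tG_i$. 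But then $\tz\in\tB_i$ is a neighbor of $\tb$ with $f_i(\tz)=a$, contradicting ($A_1$) for $(\tb,a)\in Z$.

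Once $(\tc,a)\in Z$ is established, the equality $[\tc,a]=\ta=[\tb,a]$ reduces to verifying that $(\tb,a)\equiv(\tc,a)$, and this holds via condition (Z1) because $\tb\sim\tc$ in $\tG_i$. The adjacency $\ta\sim\tc$ in $\tG_{i+1}$ then follows immediately from edge-type (2) in the definition of $\tG_{i+1}$. The only real content is obtaining $\tc\in\tS_i$ and ruling out $a\in f_i(B_1(\tc,\tG_i))$; both are short, but pinning down the second one is the main (if mild) obstacle, since it is exactly where one must combine the local isomorphism property (T$_i$) at $\tc$ with the defining condition ($A_1$) for the couple $(\tb,a)$.
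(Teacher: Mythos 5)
Your proof is correct and follows essentially the same route as the paper's: both deduce $(\tc,a)\in Z$ by contradiction using the local isomorphism at $\tc$ (property (T$_i$)) together with property $(A_1)$ for $(\tb,a)$, and then conclude $[\tc,a]=\ta\sim\tc$ via (Z1) and edge-type (2). The only (minor) difference is that you explicitly rule out $\tc\in\tB_{i-1}$ via $(A_2)$ before invoking (T$_i$), whereas the paper handles this case implicitly, since if $\tc\in\tB_{i-1}$ then (R$_i$) at $\tc$ would give the same contradiction.
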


\begin{proof} If $(\tc,a)\notin Z,$ then there exists $\ta' \sim \tc$ in
$\tB_i$ such that $f_i(\ta') = a$.  By (T$_i$) applied to $\tc$ we
  conclude that $\tb \sim \ta'$, i.e., $(\tb,a) \notin Z$, a
  contradiction. Thus $(\tc,a)\in Z$; in particular, $\ta=[\tc,a]$ by
  (Z1), and by the definition of edges of $\tG_{i+1},$ $\ta\sim \tc$.
\end{proof}

\begin{lemma}\label{lem-carre-tordu}
Let $\ta \in \tS_{i-1}$,
$\tb \in \tS_i$ and $\tc = [\tb,c] \in \tS_{i+1}$ be such that $\tb \sim
\ta,\tc$. Assume that there is a square $abcd$ in $G$.
Then there exists $\td \in \tS_i$ such that $\ta \sim \td$
and $(\td,c) \in Z$.
\end{lemma}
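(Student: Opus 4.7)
The plan is to take $\td$ to be the unique neighbor of $\ta$ in $\tG_i$ with $f_i(\td)=d$, whose existence comes from (R$_i$) applied to $\ta\in\tB_{i-1}$ together with $a\sim d$ in $G$. I would then verify the two required properties: $\td\in\tS_i$ and $(\td,c)\in Z$. For $\td\in\tS_i$, supposing for contradiction that $\td\in\tB_{i-1}$, applying (S$_i$) to $\ta,\td\in\tB_{i-1}$ with the square $abcd$ in $G$ would yield lifts $\tb',\tc''\in\tB_i$ with $\ta\tb'\tc''\td$ a square of $\tX_i$, $f_i(\tb')=b$, and $f_i(\tc'')=c$. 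Uniqueness of lifts of neighbors of $\ta$ from (R$_i$) would force $\tb'=\tb$, so $\tb\sim\tc''$ in $\tG_i$ with $f_i(\tc'')=c$ would contradict $(\tb,c)\in Z$ (which holds because $\tc=[\tb,c]\in\tS_{i+1}$).

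For $(\td,c)\in Z$, I would argue by contradiction: suppose some $\tc'\in\tB_i$ satisfies $\tc'\sim\td$ and $f_i(\tc')=c$; note $\ta\nsim\tc'$, since otherwise $a\sim c$. The argument splits into two cases. If $\tc'\in\tB_{i-1}$, I would apply the square-pyramid condition SPC$(\tv)$ from (Q$_i$) to the triple $(\td,\ta,\tc')$, with $\td$ at distance $i$ from $\tv$ and $\ta,\tc'$ at distance $i-1$. This produces either a vertex $\tx\in\tB_{i-2}$ adjacent to both $\ta,\tc'$ or a pair $\tx,\tx'\in\tS_{i-1}$ adjacent to $\td,\ta,\tc'$ with $\tx\nsim\tx'$. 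In either subcase $f_i(\tx)=x$ would be a common neighbor of $a$ and $c$ in $G$; the interval condition on $I(a,c)$ (a square, pyramid, or octahedron containing $b,d$) would then give either $x\in\{b,d\}$, contradicting the level of $\tx$ by uniqueness from (R$_i$) on $\ta$ (since $\tb,\td\in\tS_i$), or else a pyramid/octahedron structure forcing $x\sim b$ in $G$. In the latter situation, (R$_i$) or (T$_i$) applied to $\tx$ would yield $\tx\sim\tb$ and then $\tb\sim\tc'$ in $\tG_i$, again contradicting $(\tb,c)\in Z$.

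In the remaining case $\tc'\in\tS_i$, edge rule (3) for $\tG_i$ would exhibit some $\tw\in\tS_{i-1}$ with $\tc'=[\tw,c]$ and $\td=[\tw,d]$; combined with $\td=[\ta,d]$ (from $\ta\sim\td$ and edge rule (2)), this would force $(\ta,d)\equiv(\tw,d)$ at step $i-1$. I would then run a case analysis on whether this equivalence is witnessed by (Z1), (Z2), or (Z3). In each case, combining the structure of the witness with the interval condition on $I(a,c)$ in $G$ and the forbidden-subgraph results of Lemma~\ref{l:noprop} (in particular, in the (Z2) case, ruling out a half-open book on $\{u,a,b,c,d,w\}$ and thereby forcing $b\sim w$ in $G$), I would show that the same kind of equivalence identifies $(\ta,b)$ with $(\tw,b)$ at step $i-1$. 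Then $\tb=[\ta,b]=[\tw,b]$, so edge rule (3) would give $\tb\sim\tc'=[\tw,c]$ in $\tG_i$, the sought contradiction. The hard part will be this last case: one must carry the square $abcd$ back through the equivalence relation defining $\tS_i$ and show that each configuration (Z1)--(Z3) witnessing $(\ta,d)\equiv(\tw,d)$ also witnesses $(\ta,b)\equiv(\tw,b)$, which relies essentially on the absence of half-open books in $G$ to force the adjacency $b\sim w$ needed to transport the equivalence.
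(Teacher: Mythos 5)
Your first three steps match the paper exactly: obtaining $\td$ from (R$_i$), ruling out $\td\in\tB_{i-1}$ via (S$_i$) and $(\tb,c)\in Z$, and the case $\tc'\in\tS_{i-1}$ via (Q$_i$) and the interval condition on $I(a,c)$. For the case $\tc'\in\tS_i$, however, you diverge from the paper, which proceeds directly: the triangle condition yields $\tu\in\tS_{i-1}$ with $\tu\sim\tc',\td$, Lemma~\ref{c:xexists} yields $\tx\in\tS_{i-2}$ with $\tx\sim\tu,\ta$, and then the subcase $\tu\sim\ta$ is killed by the interval condition on $I(a,c)$ while $\tu\nsim\ta$ is killed by exhibiting a half open book on $\{a,b,c,d,u,x\}$. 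Your retroactive route --- tracing the edge $\td\tc'$ back to a vertex $\tw$ at stage $i-1$ via rule (3) and transporting the equivalence from $d$ to $b$ --- is a genuinely different idea and in spirit could be made to work.

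But as sketched it has a gap in the (Z3) subcase. Your plan is to force $b\sim w$ in all three subcases and then establish $(\ta,b)\equiv(\tw,b)$. This works for (Z1) and, with the additional observations $u\nsim b$ (by levels and (R$_i$) at $\ta$) and $u\nsim c$ (by the interval condition on $I(a,c)$), also for (Z2). It fails for (Z3): there $a,y,w,y',d$ form a pyramid with apex $d$, the no-propeller property applied to the triangles $dwc,dwy,dwy'$ forces $c$ to be adjacent to $y$ or $y'$, say $c\sim y$, and the interval condition on $I(a,c)$ then forces $b\sim y$; but looking at $I(b,d)$ one finds that $a,c,y$ are already three middle vertices with $y\sim a,c$ and $a\nsim c$, and a fourth middle vertex $w$ with $w\nsim a,\,w\sim y,c$ would make $I(b,d)$ have a vertex of degree $3$ in the middle set, which no square, pyramid or octahedron permits. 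Hence $b\nsim w$, so $(\tw,b)\notin Z$ and the equivalence $(\ta,b)\equiv(\tw,b)$ you want is not even defined. The repair is to replace $\tw$ by $\ty$ (since $\tc'=[\ty,c]$ as well, $\ty\sim\ta$, and $b\sim y$) and thereby reduce (Z3) to (Z1) --- a move similar in flavor to the reductions in the paper's proof of Proposition~\ref{equiv} --- rather than trying to force $b\sim w$ directly. So: same approach up through the $\tS_{i-1}$ case, a different but salvageable approach for the $\tS_i$ case, with a concrete error in the (Z3) subcase of your proposed transport argument.
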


\begin{proof}
By (R$_i$), there exists $\td \sim \ta$ such that $f_{i}(\td)=d$. If
$\td \in \tB_{i-1}$, then by conditions (P$_i$), (R$_i$), and (S$_i$), there exists $\tc' \in \tB_i$
such that $\tc' \sim \tb, \td$ and $f_{i}(\tc') = c$; this is impossible since $(\tb,c) \in
Z$. Consequently, $\td \in \tS_i$.

Suppose now that $(\td,c) \notin Z$. It means that there exists $\tc'
\in \tS_{i-1} \cup \tS_i$ such that $\tc' \sim \td$ and $f_i(\tc')=
c$. We distinguish two cases depending on if $\tc' \in \tS_{i-1}$ or $\tc'
\in \tS_{i}$.

\medskip
\noindent
\textbf{Case 1: $\tc' \in \tS_{i-1}$.}
\medskip

We have $\tc' \nsim \ta$. By the triangle and square-pyramid conditions (Q$_i$), either there exists $\tx \in \tS_{i-2}$ such that
$\tx \sim \ta,\tc'$, or there exist $\ty,\ty' \in \tS_{i-1}$ such that
$\ty,\ty'\sim \ta, \tc', \td$ and $\ty \nsim \ty'$.

If there exists $\tx \in \tS_{i-2}$ such that $\tx \sim \ta,\tc'$, then
$\tx \nsim \tb, \td$. By (R$_i$), $x=f_i(\tx) \sim a,c$ and $x \nsim
b,d$. Since $b,d,x$ belong to the interval $I(a,c)$ and are
pairwise non-adjacent, we get a contradiction with the interval
condition.

Assume that there exist $\ty,\ty' \in \tS_{i-1}$ such that $\ty,\ty'\sim \ta,
\tc', \td$ and $\ty \nsim \ty'$. We have $y =
f_i(\ty) \sim a,c,d$, and by the interval condition, $y \sim b$. By
(R$_i$) applied to $\ta$ and then to $\ty$, we obtain that $\ty \sim \tb$ and $\tb
\sim \tc'$. Consequently, $(\tb,c) \notin Z$, a contradiction.

\medskip
\noindent
\textbf{Case 2: $\tc' \in \tS_{i}$.}
\medskip

By the triangle condition (Q$_i$), there exists $\tu \in \tS_{i-1}$
such that $\tu \sim \tc',\td$.  By Lemma~\ref{c:xexists}, we can
assume that there exists $\tx \in \tS_{i-2}$ such that $\tx \sim \tu,
\ta$. Note that $\tx \nsim \tb, \tc', \td$ and by (R$_i$), we have $x \nsim
b,c,d$.

If $\tu \sim \ta$, then $u \sim a$ and by the interval condition applied to the
interval $I(a,c)$, we have $u
\sim b$. By (R$_i$) applied to $\ta$ and to $\tu$, we get $\tu \sim
\tb$ and $\tb \sim \tc'$. Consequently, $(\tb,c) \notin Z$, a
contradiction.

Suppose now that $\tu \nsim \ta$. Note that if $\tu \sim \tb$, then $u
\sim b$, $u\sim a$ by the interval
condition, and $\tu \sim \ta$ by (R$_i$). Consequently, $\tu \nsim
\ta,\tb$ and $u \nsim a,b$.  Therefore, the graph induced by the
vertices $a,b,c,d, u,x$ is a half open book since $x \nsim b,c,d$,
$u\nsim a,b$, $a\nsim c$ and $b \nsim d$, a contradiction by Lemma \ref{l:noprop}.
\end{proof}

We now show that the subgraphs induced by $B_1(\ta,\tG_{i+1})$ and
$f_{i+1}(B_1(\ta,\tG_{i+1}))$ are isomorphic.

\begin{lemma}\label{lem-triangles}
Let $\ta,\tb, \tc$  be three distinct vertices in $\tG_{i+1}$ such that $\ta \sim \tb, \tc$.
Then $\tb \sim \tc$ if and only
if $b \sim c$.
\end{lemma}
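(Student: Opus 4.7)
The direction $\tb \sim \tc \Rightarrow b \sim c$ is immediate from Lemma~\ref{lem-homomorphism}, so I focus on the converse: assuming $b \sim c$ in $G$, I aim to show $\tb \sim \tc$ in $\tG_{i+1}$. The plan is a case analysis on the sphere indices of $\ta,\tb,\tc$ in the decomposition $\tB_{i+1}=\tB_{i-1}\cup\tS_i\cup\tS_{i+1}$, progressively invoking (R$_i$), (T$_i$), (S$_i$), Lemma~\ref{lem-aux}, and the equivalence relation $\equiv$.

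When $\ta\in\tB_{i-1}$ the conclusion follows at once from (R$_i$) applied to $\ta$, since then $\tb,\tc\in\tB_i$. When $\ta\in\tS_i$ I split according to the positions of $\tb,\tc$: if $\tb,\tc\in\tB_i$, use (T$_i$) at $\ta$; if precisely one of them, say $\tc$, lies in $\tS_{i+1}$, then $\tc=[\ta,c]$ and Lemma~\ref{lem-aux} applied with its roles played by $(\tc,\ta,\tb)$ and vertex $c$ produces $(\tb,c)\in Z$ together with $[\tb,c]=\tc$, i.e., $\tb\sim\tc$; if both $\tb,\tc\in\tS_{i+1}$, then $\tb=[\ta,b]$ and $\tc=[\ta,c]$, and $b\sim c$ yields $\tb\sim\tc$ by rule~(3) in the definition of edges of $\tG_{i+1}$.

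The main case is $\ta\in\tS_{i+1}$. When $\tb,\tc\in\tS_i$, the identity $\ta=[\tb,a]=[\tc,a]$ gives $(\tb,a)\equiv(\tc,a)$; I examine the three clauses (Z1)--(Z3). Clause (Z1) already yields $\tb\sim\tc$, whereas (Z2) forces a square $ubac$ in $G$ with $b\nsim c$, and (Z3) produces a pyramid around $a$ having $\tb,\tc$ opposite on the square in $\tS_i$ and hence $b,c$ opposite on the 4--cycle base in $G$; both (Z2) and (Z3) thus contradict our assumption $b\sim c$, and only (Z1) can occur. When $\tb\in\tS_i$ and $\tc\in\tS_{i+1}$ (the symmetric situation is identical), pick $\tw\in\tS_i$ with $\ta=[\tw,a]$ and $\tc=[\tw,c]$, so that $(\tb,a)\equiv(\tw,a)$; the goal becomes to establish $(\tb,c)\in Z$ and $(\tb,c)\equiv(\tw,c)$, which combine to give $\tc=[\tb,c]$ and hence $\tb\sim\tc$. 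Membership in $Z$ follows from (T$_i$) at $\tb$: a hypothetical $\tx\sim\tb$ in $\tG_i$ with $f_i(\tx)=c$ would, through the isomorphism of $B_1(\tb,\tG_i)$ with $B_1(b,G)$ and the relation $w\sim c$, force $\tx\sim\tw$ and contradict $(\tw,c)\in Z$. The equivalence $(\tb,c)\equiv(\tw,c)$ is obtained by transporting the (Z1)--(Z3) witness for $(\tb,\tw)$ at $a$ into one at $c$, using Lemma~\ref{c:xexists}, the local triangle condition, (S$_i$), and the exclusion of propellers and half open books (Lemma~\ref{l:noprop}) to upgrade common neighbors and squares at $a$ into ones at $c$. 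Finally, the subcase $\tb,\tc\in\tS_{i+1}$ reduces, via common $\tS_i$--representatives $\tw,\tw'$ of $\ta$ (with $\tb=[\tw,b]$ and $\tc=[\tw',c]$) and the transitivity of $\equiv$ (Proposition~\ref{equiv}), to the previous subcases applied to $(\tb,[\tw,c])$ and to $([\tw,c],\tc)$.

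The main obstacle is the transport of (Z2) and (Z3) witnesses along the change of base vertex from $a$ to $c$ in the middle subcase of Case~3: one must carefully exploit Lemma~\ref{l:noprop} and the interval condition to rule out propeller and half open book configurations that would otherwise obstruct the construction of the needed common neighbor, and then translate the resulting square or pyramid into the required representative for $(\tb,c)\equiv(\tw,c)$.
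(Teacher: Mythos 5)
Your global case decomposition (by the sphere indices of $\ta,\tb,\tc$ in $\tB_{i-1}\cup\tS_i\cup\tS_{i+1}$) agrees with the paper's, and your treatment of the easy cases is correct: the subcases with $\ta\in\tB_i$ (via (R$_i$), (T$_i$), Lemma~\ref{lem-aux}, and rule~(3)) match the paper's Case~1, and the subcase $\ta\in\tS_{i+1}$, $\tb,\tc\in\tS_i$ matches the paper's Case~2.

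The gap lies in the two remaining subcases, which carry the real content. For $\ta\in\tS_{i+1}$ with exactly one of $\tb,\tc$ in $\tS_{i+1}$ (the paper's Case~3), the paper avoids manipulating $\equiv$-witnesses altogether by a short argument: take the $\tB_i$-vertex (say $\tb$), use Lemma~\ref{lem-locally-surjective} to produce $\tc'\sim\tb$ with $f_{i+1}(\tc')=c$, apply Case~1 at $\tb$ to deduce $\ta\sim\tc'$, and conclude $\tc'=\tc$ from Lemma~\ref{lem-locally-injective} at $\ta$. Your plan — prove $(\tb,c)\in Z$ and $(\tb,c)\equiv(\tw,c)$ separately — is a plausible alternative, but neither piece is actually established. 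Your argument that $(\tb,c)\in Z$ ``follows from (T$_i$) at $\tb$'' silently assumes $\tw\in B_1(\tb,\tG_i)$, which fails whenever $(\tb,a)\equiv(\tw,a)$ only via (Z2) or (Z3); and the ``transport'' of the (Z1)/(Z2)/(Z3) witness from base vertex $a$ to base vertex $c$, which you yourself flag as ``the main obstacle,'' is asserted rather than carried out. Similarly, for $\ta,\tb,\tc\in\tS_{i+1}$ (the paper's Case~4) your reduction passes through $[\tw,c]$, but $(\tw,c)$ need not lie in $Z$ (e.g., $\tw$ may already have a $\tB_i$-neighbour mapped to $c$), so $[\tw,c]$ is not always defined; and even when it is, one would still need an $\tS_{i+1}$-vertex adjacent to both $[\tw,c]$ and $\tc$ to make the transitivity reduction applicable, which is not given. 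The paper instead handles Case~4 directly: after eliminating $\tw\sim\tc$ and $\tw'\sim\tb$, it invokes Case~3 in contrapositive to get $w\nsim c$ and $w'\nsim b$, arranges (via the no-propeller property, possibly after replacing $\tw$) a common $\tS_{i-1}$-neighbour $\tx$ of $\tw,\tw'$, and then uses the local triangle condition together with Lemmas~\ref{lem-aux} and~\ref{lem-carre-tordu} to construct an explicit $\ty\in\tB_i$ adjacent to both $\tb$ and $\tc$, reducing to Case~1.
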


\begin{proof}
By Lemma \ref{lem-locally-injective}, $b\ne c$. If $\ta, \tb, \tc \in
\tB_i$, then the lemma holds by the condition (T$_i$) applied to $\ta$.
Thus further we assume that among $\ta, \tb, \tc$ there is a vertex
outside $\tB_i$.  Note that from Lemma~\ref{lem-homomorphism}, if $\tb
\sim \tc$, then $b \sim c$, establishing one direction. Suppose now
that $b \sim c$ in $G$; we will show that $\tb\sim \tc$ in
$\tG_{i+1}$.

\medskip
\noindent
\textbf{Case 1:} $\ta \in \tB_i$.
\medskip

If $\tb, \tc \in \tS_{i+1}$, then $\tb =
[\ta,b]$ and $\tc = [\ta,c]$. Since $b\sim c$, by construction, we
have $\tb \sim \tc$ in $\tG_{i+1}$. Suppose now that $\tb = [\ta,b]
\in S_{i+1}$ and $\tc \in \tB_i$. Then $\tb\sim \tc$ by Lemma \ref{lem-aux}.

\medskip
\noindent
\textbf{Case 2:} $\tb, \tc \in \tB_i$ and $\ta \in \tS_{i+1}$.
\medskip

We know that $\ta =[\tb,a] = [\tc,a]$. If $\tb\nsim \tc$, we are in
one of the cases (Z2) or (Z3) from the definition of $\equiv$.  Hence in $G$ the vertices $b$ and $c$
are opposite vertices of a square, which is impossible because $b\sim
c$.

\medskip
\noindent
\textbf{Case 3:} $\ta, \tb \in \tS_{i+1}$ and $\tc \in \tB_i$.
\medskip

Since $\tc \in \tB_i$, there exists $\tb' \in \tB_{i+1}$ such that
$\tb' \sim \tc$ and $f_{i+1}(\tb')=b$. Applying Case 1 to the triplet
$\tc, \ta, \tb'$, we get that $\tb' \sim \ta$. By
Lemma~\ref{lem-locally-injective}, we get that $\tb' = \tb$ and we are
done.

\medskip
\noindent
\textbf{Case 4:} $\ta, \tb, \tc \in \tS_{i+1}$.
\medskip

There exist $\tw, \tw' \in \tB_i$ such that $\tb = [\tw,b]$, $\tc =
[\tw',c],$ and $\ta = [\tw,a] = [\tw',a]$. If $\tw \sim \tc$ or $\tw'
\sim \tb$, then $\tb \sim \tc$ because $b\sim c$. Suppose further that
$\tw \nsim \tc$, $\tw' \nsim \tb$. From Case 3 applied to $\ta,\tb \in
\tS_{i+1}$ (respectively, $\ta,\tc \in \tS_{i+1}$) and $\tw' \in \tB_i$
(respectively, $\tw \in \tB_i$), it follows that $w \nsim c$ and $w' \nsim b$.
Since $[\tw,a]=[\tw',a],$ the vertices $\tw$ and $\tw'$ obey one of
the conditions (Z1),(Z2),(Z3).

We show that we can assume that there exists $\tx \in \tS_{i-1}$ such
that $\tx \sim \tw, \tw'$. If $(\tw,a)\equiv (\tw',a)$ by condition (Z2), we are done. If $(\tw,a)\equiv (\tw',a)$ by condition (Z1), $\tw
\sim \tw'$ and by the triangle condition, there exists $\tx \in \tS_{i-1}$
such that $\tx \sim \tw,\tw'$. Suppose now that $(\tw,a)\equiv
(\tw',a)$ by condition (Z3). Thus, there exist $\ty,\ty' \in \tS_{i}$
such that $\ty,\ty' \sim \ta,\tw,\tw'$ and $\ty \nsim \ty'$. Consider
the triangles $awy$, $awy'$ and $awb$, all three sharing the common edge
$aw$. By Lemma~\ref{l:noprop}, we get that $b$ is adjacent to $y$ or
$y'$, say $b \sim y$.
By Case 3 for $\tb,\ta,\ty$, we have $\tb \sim \ty$.  Then, we replace
$\tw$ by $\ty$, and $(\ty,a)\equiv (\tw',a)$ by condition (Z1): there
exists $\tx \in \tS_{i-1}$ such that $\tx \sim \ty, \tw'$, by the triangle condition.

Note that $\tx\nsim \ta,\tb,\tc$ and that $x\sim w,w'$. By
Lemma~\ref{lem-aux}, $x \nsim a,b,c$. By the local triangle condition,
there exists a vertex $y$ adjacent to $x,b$, and $c$.  By (R$_i$)
there exists $\ty\in \tB_i$ such that $f_i(\ty)=y$ and $\ty\sim \tx$.
If $y\sim w$, first (R$_i$) implies that $\ty\sim \tw$ and then Lemma
\ref{lem-aux} shows that $\ty\sim\tb$.  If $y\nsim w$, $xwby$ is a
square of $G$ and by Lemma~\ref{lem-carre-tordu}, $\ty \sim \tb$ and
thus $\ty \in \tS_i$. Using the same reasoning, one can show that $\ty \sim
\tc$. Applying Case 1 to the triplet $\ty,\tb,\tc$, we conclude that $\tb\sim \tc$.
\end{proof}

We can now prove that the image under $f_{i+1}$ of a triangle
or a square is a triangle or a square.

\begin{lemma} \label{cellular}
If $\widetilde{a}\widetilde{b}\widetilde{c}$ is a triangle in $\tG_{i+1}$, then
$abc$ is a triangle in $G$. If
$\widetilde{a}\widetilde{b}\widetilde{c}\widetilde{d}$ is a square in
$\tG_{i+1}$, then $abcd$ is a square in $G$. Moreover, $\tG_{i+1}$ does not contain
induced $K_{2,3}$ and $W_4^-$. \end{lemma}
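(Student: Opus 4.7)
The plan is to transfer the local structure around each vertex from $\tG_{i+1}$ down to $G$ via $f_{i+1}$, using Lemmas~\ref{lem-homomorphism}, \ref{lem-locally-injective}, and~\ref{lem-triangles}, and then, for the two forbidden induced subgraphs, to derive a contradiction with the interval condition on $G$.

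For the triangle assertion I would apply Lemma~\ref{lem-homomorphism} to each of the three edges of $\ta\tb\tc$ to obtain the edges $ab,bc,ac$ in $G$, and then use Lemma~\ref{lem-locally-injective} (at $\ta$ for the pair $\tb,\tc$ and at $\tb$ for the pair $\ta,\tc$) to conclude that $a,b,c$ are pairwise distinct. For the square, Lemma~\ref{lem-homomorphism} again gives the four edges $ab,bc,cd,da$, while Lemma~\ref{lem-locally-injective}, applied at $\tb$ and at $\ta$, ensures that $a\neq c$ and $b\neq d$. The two diagonal non-adjacencies $a\nsim c$ and $b\nsim d$ follow from Lemma~\ref{lem-triangles}: if, say, $a\sim c$ held in $G$, then since $\tb$ is a common neighbor of $\ta$ and $\tc$ in $\tG_{i+1}$, that lemma would force $\ta\sim\tc$, contradicting the assumption that $\ta\tb\tc\td$ is an induced square.

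Suppose next that $\tG_{i+1}$ contained an induced $K_{2,3}$ with parts $\{\tu_1,\tu_2\}$ and $\{\tv_1,\tv_2,\tv_3\}$. Iterated use of Lemma~\ref{lem-triangles} (at $\tv_1$ for the pair $\tu_1,\tu_2$ and at $\tu_1$ for the triple $\tv_1,\tv_2,\tv_3$), combined with Lemmas~\ref{lem-homomorphism} and~\ref{lem-locally-injective} for distinctness, produces five pairwise distinct vertices $u_1,u_2,v_1,v_2,v_3$ in $G$ with $u_1\nsim u_2$, $v_i\nsim v_j$ for $i\neq j$, and $u_i\sim v_j$ for all $i,j$. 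Hence $d(u_1,u_2)=2$ and the $2$--interval $I(u_1,u_2)$ contains three pairwise non-adjacent common neighbors, which is forbidden: in each of the three permitted interval types the set of common neighbors of the two endpoints has no independent set of size~$3$ (a square has only two common neighbors, a pyramid has three forming a $K_{1,2}$, and an octahedron has four forming a $C_4$).

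For an induced $W_4^{-}$ with centre $\tc$, $4$--cycle $\ta\tb\tc'\td$, and missing spoke $\tc\tc'$, the analogous transfer via Lemma~\ref{lem-triangles} (applied at $\tb$ and at $\td$) gives in $G$ three distinct common neighbors $a,c,c'$ of the non-adjacent pair $b,d$ with $a\sim c$ being the only edge among them and $c'$ non-adjacent to both $a$ and $c$. Since $d(b,d)=2$, the induced subgraph on $I(b,d)$ must be a square, pyramid, or octahedron; but a square has only two common neighbors, a pyramid forces the three common neighbors to form a $K_{1,2}$ (two edges, not one), and an octahedron forces four common neighbors in a $C_4$ in which every vertex has degree~$2$ among the common neighbors (so no isolated $c'$). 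None of these matches the ``one edge plus an isolated vertex'' structure on $\{a,c,c'\}$, giving the required contradiction. The main point of care throughout is to apply Lemma~\ref{lem-triangles} at the correct base vertex so that each edge and non-edge in the configuration in $\tG_{i+1}$ is faithfully mirrored in $G$; once that is done, the interval condition does the rest.
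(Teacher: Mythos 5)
Your proof is correct and follows essentially the same route as the paper: use Lemma~\ref{lem-homomorphism} for edges, Lemma~\ref{lem-locally-injective} for distinctness, and Lemma~\ref{lem-triangles} to preserve non-adjacencies, so that the image of a triangle, square, $K_{2,3}$, or $W_4^-$ is an induced copy of the same graph in $G$, the latter two then contradicting the interval condition. You spell out in more detail than the paper why an induced $K_{2,3}$ or $W_4^-$ cannot sit inside a $2$--interval, and the only small imprecision is in the $W_4^-$ case, where the non-adjacency $b\nsim d$ should be obtained by applying Lemma~\ref{lem-triangles} at $\ta$ (or $\tc$) rather than at $\tb$ or $\td$.
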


\begin{proof}
For triangles, the assertion follows directly from Lemma~\ref{lem-homomorphism}.
Consider now a square $\ta\tb\tc\td$. From
Lemmas~\ref{lem-homomorphism} and \ref{lem-locally-injective}, the vertices $a,
b, c,$ and $d$ are pairwise distinct and $a\sim b$, $b \sim c$, $c \sim d$, $d
\sim a$. From Lemma~\ref{lem-triangles}, $a\nsim c$ and $b \nsim
d$. Consequently, $abcd$ is a square in $G$.

Now, if $\tG_{i+1}$ contains an induced $K_{2,3}$ or $W^-_4,$ from the first
assertion
and Lemma \ref{lem-triangles} we conclude that the image under $f_{i+1}$ of this
subgraph will be an induced $K_{2,3}$ or  $W^-_4$
in the graph $G$, contrary to the interval condition.
\end{proof}

Lemma~\ref{cellular} implies that replacing all $3$--cycles and all
induced $4$--cycles of $\widetilde{G}_{i+1}$ by triangle- and
square-cells, we will obtain a triangle-square flag complex, which we
denote by $\widetilde{{\bX}}_{i+1}$.  Then, obviously, $\widetilde{G}_{i+1}=G(\widetilde{{\bX}}_{i+1})$.  The first assertion
of Lemma~\ref{cellular} and the flagness of $X$ implies that $f_{i+1}$
can be extended to a cellular map from $\widetilde{\bX}_{i+1}$ to
$\bX$: $f_{i+1}$ maps a triangle
$\widetilde{a}\widetilde{b}\widetilde{c}$ to the triangle $abc$ of
${\bX}$ and a square
$\widetilde{a}\widetilde{b}\widetilde{c}\widetilde{d}$ to the square
$abcd$ of ${\bX}$.

\begin{lemma}[Properties (R$_{i+1}$) and (T$_{i+1}$)]
\label{Ri+1}\label{Ti+1}
The map $f_{i+1}$ satisfies the conditions $(R_{i+1})$ and $(T_{i+1})$.
\end{lemma}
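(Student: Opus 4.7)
The plan is to derive both properties directly from the four technical lemmas established earlier in the current subsection: Lemma~\ref{lem-homomorphism} (edge-preservation), Lemma~\ref{lem-locally-surjective} (local surjectivity at vertices of $\tB_i$), Lemma~\ref{lem-locally-injective} (local injectivity), and Lemma~\ref{lem-triangles} (adjacency among neighbors of a common vertex is preserved and reflected). Together these essentially package (R$_{i+1}$) and (T$_{i+1}$), so the remaining task is organizational: identify the correct case split and confirm the bookkeeping.

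For (R$_{i+1}$), I would split into two cases according to whether $\ta \in \tB_{i-1}$ or $\ta \in \tS_i$. In the first case the key observation is that no new edge of $\tG_{i+1}$ is incident to a vertex of $\tB_{i-1}$, and no new edge is created between two vertices of $\tB_i$. Indeed, inspecting the three rules defining the edges of $\tG_{i+1}$ in Section~\ref{s:def-i+1}, every newly added edge has at least one endpoint in $\tS_{i+1}$, and the only $\tB_i$-neighbors of vertices of $\tS_{i+1}$ lie in $\tS_i$ (via type~(2) edges $[\tw,z]\sim\tw$ with $\tw\in\tS_i$). Consequently $B_1(\ta,\tG_{i+1}) = B_1(\ta,\tG_i)$ as induced subgraphs and (R$_{i+1}$) follows from (R$_i$). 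In the second case, $\ta\in\tS_i$, local injectivity (Lemma~\ref{lem-locally-injective}) and local surjectivity (Lemma~\ref{lem-locally-surjective}) combine to give a bijection $B_1(\ta,\tG_{i+1})\to B_1(a,G)$, and Lemmas~\ref{lem-homomorphism} and~\ref{lem-triangles} show that this bijection both preserves and reflects adjacency, hence is a graph isomorphism.

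For (T$_{i+1}$) the argument is even simpler, since one does not need surjectivity onto $B_1(a,G)$, only an isomorphism onto the image. Let $\ta\in\tS_{i+1}$. Lemma~\ref{lem-locally-injective} yields a bijection between $B_1(\ta,\tG_{i+1})$ and its image $f_{i+1}(B_1(\ta,\tG_{i+1}))$, while Lemmas~\ref{lem-homomorphism} and~\ref{lem-triangles} again guarantee that this bijection preserves and reflects adjacencies. This is precisely the isomorphism required by (T$_{i+1}$).

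The conceptual obstacle has already been overcome in the preceding subsection, notably in the delicate case analysis of Lemma~\ref{lem-triangles}, which encapsulates the subtle interplay between the three clauses (Z1)--(Z3) defining $\equiv$ and the pyramid/square cells used to build $\tS_{i+1}$. What remains here is essentially verification: the only point requiring genuine care is the first subcase of (R$_{i+1}$), where one must confirm that $\tG_{i+1}$ introduces no edges in the $1$-ball around a vertex of $\tB_{i-1}$; this follows from a direct reading of the edge-definition rules (1)--(3).
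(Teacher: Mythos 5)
Your proof is correct and follows essentially the same route as the paper, which derives (R$_{i+1}$) and (T$_{i+1}$) by packaging Lemmas~\ref{lem-locally-injective}, \ref{lem-locally-surjective}, \ref{lem-homomorphism}, and \ref{lem-triangles}. The case split you perform for (R$_{i+1}$) is unnecessary --- the paper treats all of $\tB_i$ uniformly by combining local injectivity and adjacency-reflection (valid for any $\tw\in\tB_{i+1}$) with local surjectivity from Lemma~\ref{lem-locally-surjective} for $\tw\in\tB_i$ --- but your alternative argument for $\ta\in\tB_{i-1}$ (that the $1$-ball is unchanged in $\tG_{i+1}$, so (R$_i$) applies directly) is also valid.
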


\begin{proof}
From Lemmas~\ref{lem-locally-injective} and~\ref{lem-triangles}, we
know that for any $\tw \in \tB_{i+1}$, $f_{i+1}$ induces an
isomorphism between the subgraph of $\tG_{i+1}$ induced by
$B_1(\tw,\tG_{i+1})$ and the subgraph of $G$ induced by
$f_{i+1}(B_1(\tw,\tG_{i+1}))$. Consequently, the condition (T$_{i+1}$)
holds.  From Lemma~\ref{lem-locally-surjective}, we know that for
every $\tw \in \tB_i$, $f_{i+1}(B_1(\tw,\tG_{i+1})) = B_1(w,G)$ and
consequently (R$_{i+1}$) holds as well.
\end{proof}

\begin{lemma}[Property (S$_{i+1}$)]
\label{Si+1} For any $\widetilde{w},\widetilde{w}'\in
  \widetilde{B}_{i}$ such that the vertices
  $w=f_{i+1}(\widetilde{w}),w'=f_{i+1}(\widetilde{w}')$ belong to a square
  $ww'u'u$ of ${\bX}$, there exist $\widetilde{u},\widetilde{u}'\in
  \widetilde{B}_{i+1}$ such that $f_{i+1}(\widetilde{u})=u,
  f_{i+1}(\widetilde{u}')=u',$ and
$\widetilde{w}\widetilde{w}'\widetilde{u}'\widetilde{u}$
  is a square of $\widetilde{{\bX}}_{i+1}$, i.e., $\widetilde{{X}}_{i+1}$ satisfies the property $(S_{i+1})$.
\end{lemma}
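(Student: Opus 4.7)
The plan is to establish (S$_{i+1}$) by a case analysis based on whether each of $\tw, \tw'$ lies in $\tB_{i-1}$ or in $\tS_i$. Since the conclusion requires $\tw\tw'$ to be an edge of the induced 4-cycle $\tw\tw'\tu'\tu$, the adjacency $\tw \sim \tw'$ is implicit in the hypothesis (and holds inside $\tG_i$, hence inside $\tG_{i+1}$). Writing the given square in $X$ as $ww'u'u$, we have $w \sim w'$, $w \sim u$, $u \sim u'$, $u' \sim w'$, while the diagonals $wu', w'u$ are non-edges of $G$. By Lemma~\ref{lem-triangles}, any lift $\tu$ of $u$ adjacent to $\tw$ is automatically non-adjacent to $\tw'$ (and similarly for $\tu'$), so the non-adjacency part of the square is automatic: it remains only to produce $\tu, \tu' \in \tB_{i+1}$ with the correct images and $\tu \sim \tu'$, $\tw \sim \tu$, $\tw' \sim \tu'$.

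When $\tw, \tw' \in \tB_{i-1}$, I apply (S$_i$) directly to $\tw, \tw'$ and lift the square inside $\tX_i \subset \tX_{i+1}$. When exactly one of them, say $\tw$, lies in $\tB_{i-1}$ while $\tw' \in \tS_i$, I first use (R$_i$) at $\tw$ to lift $u$ to $\tu \in \tB_i$ adjacent to $\tw$; if $\tu \in \tB_{i-1}$, (S$_i$) applied to $\tw, \tu$ with the square $wuu'w'$ yields the lifted square (local injectivity of $f_i$ on $B_1(\tw,\tG_i)$ forces the lifted partner of $w'$ to coincide with $\tw'$). If $\tu \in \tS_i$, I verify $(\tu,u'),(\tw',u')\in Z$ and invoke condition (Z2) of the equivalence relation $\equiv$ with common neighbor $\tw \in \tB_{i-1}$ and square $wuu'w'$ in $G$, concluding $(\tu,u') \equiv (\tw',u')$; the class $\tu' := [\tu,u'] = [\tw',u'] \in \tS_{i+1}$ is then adjacent to both $\tu$ and $\tw'$ by type-(2) edges, and non-adjacent to $\tw$ because no edges of types (2) or (3) link $\tB_{i-1}$ with $\tS_{i+1}$.

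The main obstacle is the case where both $\tw, \tw' \in \tS_i$. Here I first use the triangle condition (Q$_i$) applied to the adjacent pair $\tw, \tw'$ at distance $i$ from $\tv$ to obtain $\tx \in \tB_{i-1}$ with $\tx \sim \tw,\tw'$, so that $x := f_i(\tx)$ is a common neighbor of $w, w'$ in $G$. I then split on whether each of $(\tw,u), (\tw',u')$ belongs to $Z$ (noting that $(\tw',u), (\tw,u') \notin Z$ automatically, since $w' \nsim u$ and $w \nsim u'$). In the generic subcase where $(\tw,u), (\tw',u') \in Z$, I set $\tu := [\tw,u]$ and $\tu' := [\tw',u']$ in $\tS_{i+1}$, and the type-(3) edge $\tu \sim \tu'$ requires producing a common base $\tw_0 \in \tS_i$ with $(\tw_0,u)\equiv(\tw,u)$ and $(\tw_0,u')\equiv(\tw',u')$; such a $\tw_0$ is obtained by applying the interval condition in $G$ to $I(w,u')$ (which is a square, pyramid, or octahedron) and lifting an appropriate common neighbor of $u, u'$ through $\tx$ via (R$_i$). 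Mixed subcases (when $(\tw,u)\notin Z$ or $(\tw',u')\notin Z$, so that one of the two lifts already exists in $\tB_i$) are handled using Lemma~\ref{c:xexists} and Lemma~\ref{lem-carre-tordu} to align the existing $\tB_i$-lift with the equivalence-class lift via (S$_i$) on a suitably chosen pair.

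The hard part throughout Case~3 is ensuring that the chosen lifts do not introduce spurious diagonal adjacencies that would collapse the lifted 4-cycle into a $K_4$-like configuration; this is controlled by the no-propeller and no-half-open-book conditions (Lemma~\ref{l:noprop}), by the local injectivity properties of $f_i$ encoded in (R$_i$), (T$_i$), and by the careful bookkeeping of which pairs $(\tw_0,z)$ actually belong to $Z$.
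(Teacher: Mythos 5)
The broad outline of your case analysis (splitting on whether each of $\tw,\tw'$ lies in $\tB_{i-1}$ or $\tS_i$, reducing the base case to~(S$_i$), and using (Z2)/(Z3) to manufacture new vertices) is in the right spirit, but there are genuine gaps at the points that carry most of the difficulty. The paper's proof hinges on a preliminary reduction (Claim~\ref{claim-noW4}): if there is \emph{any} extra vertex adjacent to $u$ and $w'$ (or to $u'$ and $w$) besides the two obvious ones, in $G$ or in $\tG_{i+1}$, the lifted square follows immediately. All subsequent arguments are carried out \emph{under the standing assumption} that no such pivot exists. Your proposal omits this reduction, and several of the appeals you make implicitly rely on the simplifications it buys.

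Concretely, in your mixed case ($\tw\in\tB_{i-1}$, $\tw'\in\tS_i$, $\tu\in\tS_i$), you ``verify $(\tu,u'),(\tw',u')\in Z$'' and then apply~(Z2). But neither membership is automatic: $\tw'$ (respectively $\tu$) may already have a neighbor in $\tG_i$ mapped to $u'$. When that happens you would have to show that these pre-existing lifts of $u'$ coincide, which requires its own argument --- this is precisely what the paper's detailed subcasing on the level of $\tu'$ (whether it lies in $\tB_{i-1}$, $\tS_i$, or $\tS_{i+1}$, using the square-pyramid condition, the positioning condition (U$_i$), and Lemma~\ref{lem-carre-tordu}) is doing. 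Your sketch does not address it.

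The more serious gap is in the case $\tw,\tw'\in\tS_i$. To get a type-(3) edge $\tu\sim\tu'$ you correctly observe you need a common base $\tw_0\in\tS_i$ with $\tu=[\tw_0,u]$ and $\tu'=[\tw_0,u']$. You propose to extract $\tw_0$ from the interval $I(w,u')$: but when $I(w,u')$ is a plain square (only $\{w,u,w',u'\}$), there is no further common neighbor of $u,u'$ in it, and neither $w$ nor $w'$ can serve (e.g.\ $(\tw,u')\notin Z$ since $w\nsim u'$). The paper instead takes the pivot $\ty\in\tS_{i-1}$ from the triangle condition, uses the \emph{local triangle condition} to find $x\sim y,u,u'$ \emph{outside} $I(w,u')$, lifts $x$ through $\ty$, and then reduces Case~2 to Case~1 for the two squares $wyxu$ and $w'yxu'$, getting $\tx\sim\tu,\tu'$ and concluding by (T$_{i+1}$) at $\tx$. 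That step also resolves the level bookkeeping for the pivot, which your proposal leaves open: you would still need $\tw_0\in\tS_i$ and $(\tw_0,u),(\tw_0,u')\in Z$, which your ``lift through $\tx$'' construction does not guarantee (the lifted vertex need not land in $\tS_i$, and $y$ need not even be adjacent to $x$). These gaps are not cosmetic; they are the crux of the lemma.
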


\begin{proof}
Note that if $\tw, \tw' \in \tB_{i-1}$, the lemma holds by the condition
(S$_i$). Let us assume further that $\tw \in \tS_i$.  By the property (R$_{i+1}$) (cf.\ Lemma~\ref{Ri+1}) applied to $\tw$ and $\tw'$, we know that in
$\tG_{i+1}$ there exist $\tu,\tu'$ distinct from $\tw,\tw'$, such that $\tu\sim\tw$,
$\tu'\sim\tw'$ and $f_{i+1}(\tu)=u$, $f_{i+1}(\tu')=u'$. Observe that, by (R$_{i+1}$), we have $\tu \nsim \tw'$ and $\tu' \nsim \tw$.

\begin{claim}\label{claim-noW4}
If there exists $y \notin \{u',w\}$ such that $y \sim u,w'$, then
$\tw\tw'\tu'\tu$ is a square in $\tG_{i+1}$.
If there exists $\ty \in \tB_{i+1}$ such that $\ty
\notin\{\tu',\tw\}$ and $\ty \sim \tu,\tw'$, then $\tw\tw'\tu'\tu$ is a square in $\tG_{i+1}$.
\end{claim}

\begin{proof} For the first statement: By the interval condition
  applied to the interval $I(u,w')$, we have $y \sim
  u,u',w,w'$. By (R$_{i+1}$) applied to $\tw$ and $\tw'$, there exists
  $\ty \in \tB_{i+1}$ such that $\ty \sim \tu,\tu',\tw$ and $f_{i+1}(\ty)=y$. By
  (R$_{i+1}$) or (T$_{i+1}$) applied to $\ty$, we have $\tu \sim \tu'$.

The second statement follows from the first one, and from the fact that, by (R$_{i+1}$), $y = f_{i+1}(\ty) \notin \{u',w\}$ and $y \sim u,w'$.
\end{proof}

Thus, for the rest of the proof of the lemma, we assume the following (since
otherwise the lemma follows from Claim~\ref{claim-noW4}):

\begin{itemize}
\item there does not exist
$\ty \in \tB_{i+1}$ such that
$\ty \sim
\tu,\tw'$ and $\ty \neq \tu',\tw$, or such that $\ty \sim \tu',\tw$ and $\ty \neq \tu,\tw'$;
\item there does not exist $y \in V(G)$ such that $y \sim u,w'$ and $y\neq u',w$,
or such that $y \sim u',w$ and $y\neq u,w'$.
\end{itemize}

\medskip
\noindent{\textbf{Case 1.} $\tw \in \tS_{i}, \tw' \in \tS_{i-1}$.}
\medskip

If $\tu' \in \tB_{i-1}$ then, by (S$_i$) applied to $\tw'$ and $\tu'$, we
conclude that $\tw\tw'\tu'\tu$ is a square in $\tG_{i+1}$. Hence further we assume
that $\tu'\in \tS_i$.

If $\tu \in \tS_{i-1}$ then, by (R$_{i+1}$) applied to $\tw$, we conclude that $\tu$ is not adjacent to $\tw'$.
By the square-pyramid condition (Q$_i$), there exists
 $\ty \in \tB_{i-1}$ such that $\ty \sim \tu,\tw'$, which contradicts our assumptions.

Suppose now that $\tu \in \tS_{i}$.  By the triangle condition
(Q$_i$), there exists $\ty\in \tS_{i-1}$ such that $\ty \sim
\tu,\tw$. By (R$_{i+1}$), we know that $y = f_i(\ty)
\notin\{u,u',w,w'\}$ and $y \sim u,w$.
By our assumptions, we have $y \nsim u',w'$. By the local triangle condition, there exists $x
\sim u',w',y$.  By (R$_{i}$) applied to $\tw'$ there exists $\tx \in
\tB_i$ such that $\tx \sim \tw', \tu'$ and $f_{i+1}(\tx)=x$.  Again, by our assumptions,
we have $x \nsim u,w$, i.e.,
$wyxw'$ is a square of $G$.  By the previous case applied to the
square $wyxw'$ (i.e., with $\ty$ and $\tx$ playing respectively the
roles of $\tu$ and $\tu'$), we get that $\tw\ty\tx\tw'$ is a square of
$\tG_{i+1}$. By the positioning condition (U$_i$) with respect to
$\tv$ for the square $\tw\ty\tx\tw'$, we get that $\tx \in
\tS_{i-2}$. This is impossible since $\tx \sim \tu'$ and $\tu' \in
\tS_i$.

Suppose now that $\tu \in \tS_{i+1}$, i.e., $\tu = [\tw,u]$. By
Lemma~\ref{lem-carre-tordu}, $(\tu',u) \in Z$. Since $\tw' \in
\tS_{i-1}$, by $(Z2)$, $\tu = [\tu',u] \sim \tu'$ and we are done.

\medskip
\noindent{\textbf{Case 2.} $\tw,\tw' \in \tS_{i}$.}
\medskip

By the triangle condition (Q$_i$), there exists $\ty \in \tS_{i-1}$
such that $\ty \sim \tw, \tw'$. By (R$_{i+1}$), we have $y =
f_{i+1}(\ty) \sim w,w'$.  By our assumptions we have
that $y \nsim u,u'$. By the local triangle condition, there exists $x
\sim y,u,u'$ and, again by our assumptions, we have $x\nsim w,w'$.
By (R$_{i+1}$), there exists $\tx \sim \ty$ such that
$f_{i+1}(\tx) =x$. Applying Case 1 to the squares $wyxu$ and $w'yxu'$, we get that
$\tx \sim \tu$ and $\tx \sim
\tu'$. By (T$_{i+1}$) applied to $\tx$, we conclude that $\tu \sim \tu'$.
\end{proof}

\begin{lemma}[Property (U$_{i+1}$)]
  The graph $\widetilde{{G}}_{i+1}$ satisfies the property (U$_{i+1}$), i.e.,
  the squares of $\tG_{i+1}$ satisfy the positioning condition PC$(\tv)$.
\end{lemma}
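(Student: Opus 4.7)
The plan is to consider a square $\tu_1\tu_2\tu_3\tu_4$ of $\tG_{i+1}$, set $d_j := d(\tv,\tu_j)$ and $u_j := f_{i+1}(\tu_j)$, and prove $d_1+d_3 = d_2+d_4$ by case analysis on the distribution of the vertices $\tu_j$ between $\tB_i$ and $\tS_{i+1}$. By Lemma \ref{cellular}, $u_1u_2u_3u_4$ is a square of $G$. If all four $\tu_j$ lie in $\tB_i$, then the equality is immediate from the inductive hypothesis (U$_i$) applied inside $\tG_i$, using property (P$_{i+1}$) to identify distances in $\tG_i$ and $\tG_{i+1}$ for vertices of $\tB_i$. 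Otherwise, up to cyclic relabelling I may assume $\tu_1 \in \tS_{i+1}$, so $d_1 = i+1$. Since edges from $\tS_{i+1}$ into $\tB_i$ land only in $\tS_i$ by the construction of $\tG_{i+1}$, this forces $\tu_2,\tu_4 \in \tS_i \cup \tS_{i+1}$, while the opposite vertex $\tu_3$ lies in $\tS_{i-1}\cup\tS_i\cup\tS_{i+1}$.

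The argument then splits into three subcases according to the placement of $\tu_3$. The subcase $\tu_3 \in \tS_{i-1}$ is immediate: the neighbors $\tu_2,\tu_4$ of $\tu_3 \in \tB_{i-1}$ must lie in $\tB_i$, hence in $\tS_i$, so that $d_1+d_3 = 2i = d_2+d_4$. In the subcase $\tu_3\in\tS_i$, I need to show $d_2+d_4 = 2i+1$, i.e., that exactly one of $\tu_2,\tu_4$ lies in $\tS_{i+1}$ and the other in $\tS_i$. In the subcase $\tu_3\in\tS_{i+1}$, I need to show $d_2+d_4 = 2(i+1)$, i.e., that both $\tu_2$ and $\tu_4$ lie in $\tS_{i+1}$.

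The two delicate subcases I would attack by contradiction, following the template of the analogous argument in \cite{BCCGO}. A ``forbidden'' placement of $\tu_2$ or $\tu_4$ in $\tS_i$ forces an identification of the form $\tu_1 = [\tu_2,u_1] = [\tu_4,u_1]$; since $\tu_2 \nsim \tu_4$, the equivalence $\equiv$ must arise through case (Z2) or (Z3) of its definition, furnishing one or two auxiliary common neighbors of $u_2$ and $u_4$ in $G$. Applying the square-pyramid condition SPC$(\tv)$ from Lemma \ref{Qi+1} to the triple $\tu_1,\tu_2,\tu_4$ yields a further common neighbor of $\tu_2,\tu_4$ in $\tG_{i+1}$. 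Lifting these configurations through (R$_{i+1}$), (T$_{i+1}$), and (S$_{i+1}$), and combining them with the already-present $u_1$ and $u_3$, the interval $I(u_2,u_4)$ in $G$ ends up with too many common neighbors for any of the three shapes (square, pyramid, or octahedron) permitted by the interval condition on $G$, once the absence of propellers and half open books from Lemma \ref{l:noprop} is taken into account. The main obstacle, and where most of the work lies, is the bookkeeping across the (Z2) and (Z3) branches of $\equiv$ and the two output branches of SPC$(\tv)$: each combination must be analyzed separately and shown to exhibit an incompatible shape for $I(u_2,u_4)$, with all adjacencies transferred back and forth through the local isomorphism properties of $f_{i+1}$.
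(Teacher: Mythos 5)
Your overall reorganization — a direct case analysis on the levels $\tS_{i-1},\tS_i,\tS_{i+1}$ of the four vertices rather than the paper's argument by contradiction on the inequality $d(\ta,\tv)+d(\tc,\tv)<d(\tb,\tv)+d(\td,\tv)$ — is a sound reparametrization of the same argument, and your treatment of the trivial cases (all four in $\tB_i$; $\tu_3\in\tS_{i-1}$) is correct. However, your plan for the two delicate subcases has a concrete gap.

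Your template handles only the configurations in which \emph{both} $\tu_2,\tu_4$ lie in $\tS_i$; only then is the identification $\tu_1=[\tu_2,u_1]=[\tu_4,u_1]$ even well-formed, and only then does the $\equiv$-relation on $Z$ (via (Z2) or (Z3)) apply. But two of the four bad configurations have \emph{three} of the four vertices in $\tS_{i+1}$: namely, $\tu_3\in\tS_i$ with both $\tu_2,\tu_4\in\tS_{i+1}$, and $\tu_3\in\tS_{i+1}$ with exactly one of $\tu_2,\tu_4$ in $\tS_i$. These are not ruled out by any ``forbidden placement in $\tS_i$'' and cannot be reached by cyclic relabelling from your covered cases (the placement of the two diagonals across levels is a relabelling invariant). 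Neither can they be analyzed by unpacking $\equiv$ for $\tu_1$, since at least one of its two $\tG_i$-neighbors on the square is itself in $\tS_{i+1}$, not in $\tB_i$. The paper treats this as a separate case: it uses (Q$_{i+1}$) (from Lemma~\ref{Qi+1}, already established) to find a common $\tS_i$-neighbor $\tu$ of the two adjacent $\tS_{i+1}$-vertices, splits on whether $\tu$ is adjacent to the unique $\tS_i$-vertex of the square, and derives the contradiction from the no-propeller property (Lemma~\ref{l:noprop}) in one branch and a half-open-book (or a reduction to the other branch) in the other. Your sketch would need a parallel argument for these configurations; ``overloading $I(u_2,u_4)$'' is not how either case is settled in the paper --- the contradictions there are mostly distance incompatibilities (e.g., a vertex that would have to be adjacent to both an $\tS_{i-1}$- and an $\tS_{i+1}$-vertex) or forbidden induced subgraphs, not a violation of the interval condition for $I(u_2,u_4)$.
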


\begin{proof}
 Suppose by way of contradiction that there exists a square $\ta\tb\tc\td$ of $\tG_{i+1}$ such
 that
 \begin{align*}
 d(\ta,\tv)+d(\tc,\tv) < d(\tb,\tv)+d(\td,\tv). \tag{$\ast$}
 \end{align*}
 Let $a,b,c,d$ be the respective images of $\ta,\tb,\tc,\td$ in $G$ by $f_{i+1}$.
 By Lemma \ref{Ri+1}, vertices $a,b,c,d$ induce a square in $G$.
 If $\ta,\tb,\tc,\td \in \tB_{i}$, then (U$_i$) leads to a
 contradiction.  In view of ($\ast$), in the following we can assume that at least one of the vertices $\tb,\td$ belongs to
 $\tS_{i+1},$ say $\td \in
 \tS_{i+1}$. Consequently, $\ta,\tc \in \tS_i \cup \tS_{i+1}$ and $\tb
 \in \tS_{i-1} \cup \tS_i \cup \tS_{i+1}$. Moreover, by ($\ast$), we may assume
 without loss of generality that $\ta \in \tS_i$.

 \medskip
 \noindent{\textbf{Case 1.} $\tc \in \tS_i$.}
 \medskip

 Note that the inequality ($\ast$) implies that $\tb
 \in \tS_i \cup \tS_{i+1}$.  Since $\td = [\ta,d] = [\tc,d]$, either
 there exists $\tx \in \tS_{i-1}$ such that $\tx \sim \ta,\tc$, or
 there exist $\ty,\ty' \in \tS_{i}$ such that $\ty, \ty' \sim
 \ta,\tc,\td$ and $\ty\nsim\ty'$.

 If there exists $\tx \in \tS_{i-1}$ such that $\tx \sim \ta,\tc$,
 then $\tx \neq \tb$ since $\tb \in \tS_{i} \cup \tS_{i+1}$. By
 (R$_i$), we have $x = f_{i+1}(\tx) \sim a,c$ and $x \notin
 \{b,d\}$. By the interval condition applied to $I(a,c)$, we get that
 $x \sim b,d$. Consequently, by (R$_{i+1}$) (cf.\ Lemma~\ref{Ri+1}),
 we get $\tx \sim \td$. However, this is impossible since $\tx \in
 \tS_{i-1}$ and $\td \in \tS_{i+1}$.

 Suppose now that there exist $\ty,\ty' \in \tS_{i}$ such that $\ty,
 \ty' \sim \ta,\tc,\td$ and $\ty\nsim\ty'$. By (R$_{i+1}$), we have
 $y=f_{i+1}(\ty) \sim a,c,d$ and $y'= f_{i+1}(\ty') \sim a,c,d$. By the
 interval condition, $b\sim y,y'$, and by (R$_{i+1}$), we get $\tb\sim
 \ty, \ty'$. By the triangle condition, there exists $\tu \in
 \tS_{i-1}$ such that $\tu \sim \ta,\ty$.  Since $\tb \in \tS_{i} \cup \tS_{i+1}$, we have
 $\tu \neq \tb$, and by (R$_{i+1}$) we get $u = f_{i+1}(\tu) \neq b$,
 $u\sim a,y$, and $u \nsim d$. If $u \sim c$, by the interval
 condition applied to  $I(a,c)$, we have $u \sim d$. This is a
 contradiction and, therefore, $u \nsim c$.  Consider the triangles, $ayb, ayd,$ and
 $ayu$, all three sharing the common edge $ay$. By the no-propeller property (cf.\ Lemma~\ref{l:noprop}),
 we get that $b \sim u$ and by (R$_{i+1}$), we have $\tb\sim\tu$. Consequently, $\tb \in \tS_i$.

 By the triangle condition (Q$_i$), there exists $\tu' \in \tS_{i-1}$
 such that $\tu' \sim \tb,\tc$. By (R$_{i+1}$), we have $u' =
 f_{i+1}(\tu') \sim b,c$ and $u' \nsim d$. If $u' \sim a$, by the
 interval condition applied to  $I(a,c)$, we get $u' \sim d$, a contradiction.
 Hence, $u' \nsim a$. By Lemma~\ref{c:xexists}, we can assume
 that there exists $\tx \in \tS_{i-2}$ such that $\tx \sim
 \tu,\tu'$. By (R$_{i+1}$), we have $x = f_{i+1}(\tx) \sim u,u'$, and $x
 \nsim a,b,c$, i.e., $d(x,a) = d(x,b) = d(x,c) = 2$. By the local
 positioning condition applied to the square $abcd$ with respect to $x$, we get
 that $d(x,d) = 2$. By the local triangle condition, there exists $z
 \sim a,d,x$. By (R$_{i+1}$), there exists $\tz \sim \tx$ such
 that $f_{i+1}(\tz)=z$. If $z \sim u$, by (R$_{i+1}$) applied to
 $\tx$ and $\tu$, we get that $\tz \sim \tu,\ta$. If $z \nsim u$,
 by (S$_i$) applied to the square $xuaz$, we also get that
 $\tz \sim \ta$. By (R$_{i+1}$) applied to $\ta$, we get that
 $\tz \sim \td$. However, this is impossible since $\tz$ cannot be
 adjacent to $\td \in \tS_{i+1}$ and $\tx \in \tS_{i-2}$.

 \medskip
 \noindent{\textbf{Case 2.} $\tc \in \tS_{i+1}$.}
 \medskip

Since  $d(\ta,\tv)+d(\tc,\tv) < d(\tb,\tv)+d(\td,\tv)$ by ($\ast$),
we get $\tb \in \tS_{i+1}$. By (Q$_{i+1}$) (cf.\ Lemma~\ref{Qi+1}), there exists $\tu \in \tS_i$ such that $\tu \sim
\tb,\tc$. By (T$_{i+1}$) (cf.\ Lemma~\ref{Ri+1}), we have $u = f_{i+1}(\tu) \sim b,c$ and $u \sim
a$ iff $\tu \sim \ta$ (respectively, $u \sim d$ iff $\tu \sim \td$). By the
interval condition, $u \sim a$ iff $u \sim d$.

Suppose that $\tu \sim \ta$ (note that then $\tu \sim \td$). Then, by the triangle condition, there
exists $\tx \in \tS_{i-1}$ such that $\tx \sim \tu,\ta$. By (R$_{i+1}$), we have $x = f_{i+1}(\tx) \sim a,u$ and $x \nsim b,c,d$.
Consider the triangles, $aux, aub$ and $aud$, all three sharing the common edge $au$. By the no-propeller property (Lemma~\ref{l:noprop}), we get a contradiction
since $x,b,d$ are pairwise non-adjacent.

Thus $\tu \nsim \ta$. By the square-pyramid condition (Q$_{i+1}$) (cf.\ Lemma~\ref{Qi+1}), we obtain two cases (i) and (ii) below:
\medskip

(i) There exists $\tx \in \tS_{i-1}$ with $\tx \sim \tu,\ta$. Then  the vertices $\tx,\ta,\td,\tu,\tb,\tc$ induce a half open book. By properties (R$_{i+1}$) and (T$_{i+1}$) (cf.\ Lemma~\ref{Ri+1}), we obtain that their images $x,a,d,u,b,c$ induce a half open book in $G$, which contradicts Lemma~\ref{l:noprop}.

\medskip

(ii) There exist two non-adjacent vertices $\ty,\ty' \in \tS_i$, both adjacent to $\tu,\ta,\tb$. By (R$_{i+1}$), we get then three triangles $uby,uby'$, and $ubc$ sharing the common edge $ub$. By the no-propeller property (Lemma~\ref{l:noprop}), we conclude that $y\sim c$ or $y'\sim c$, say $y\sim c$. By (R$_{i+1}$), we get
$\ty\sim \tc$, which reduces the case to the (impossible) situation when $\tu \sim \ta$ (obtained by replacing $\tu$ with $\ty$).
\medskip

In all the cases we assumed the inequality ($\ast$) and reached a contradiction. This implies that $d(\ta,\tv)+d(\tc,\tv) = d(\tb,\tv)+d(\td,\tv)$
and establishes the positioning condition.
\end{proof}

\subsection{The universal cover $\tX$.}
\label{s:univ}
Concluding our inductive construction, in this subsection we define
the universal covering map $\tX \to X$ and finish the proof of Theorem
\ref{t:m2} by showing that $\tX$ satisfies the interval and the
positioning conditions.
\medskip

\noindent
Let $\widetilde{\bX}_v$ denote the triangle-square complex obtained as the
directed union
$\bigcup_{i\ge 0} \widetilde{\bX}_i$ with the vertex $v$ of $\bX$ as the
basepoint. Denote by $\tG_v$ the
$1$--skeleton of $\widetilde{\bX}_v$. Let
$f=\bigcup_{i\ge 0}f_i$ be the cellular map from $\widetilde{\bX}_v$ to $\bX$.

\begin{lemma} \label{covering_map}
For any $\tw \in \widetilde{\bX}$, the restriction
$f|_{\mbox{St}(\tw,{\widetilde{\bX}_v})}$ of $f$ is an isomorphism
between the stars $\mbox{St}(\tw,{\widetilde{\bX}_v})$ and
$\mbox{St}(w,\bX)$.
Consequently, the map $f\colon\widetilde{{\bX}}_v\rightarrow~\!\!\bX$
is a covering map.
\end{lemma}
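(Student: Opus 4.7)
My plan is to verify the star isomorphism by unpacking the inductive properties $(R_j)$ and $(S_j)$ together with the auxiliary Lemmas~\ref{lem-locally-injective}, \ref{lem-triangles}, and~\ref{cellular}. The covering-map conclusion will then be immediate from the definition recalled earlier.

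Fix $\tw \in \widetilde{\bX}_v$ and let $i\ge 0$ be the unique index with $\tw \in \tS_i$. The first observation is that every edge of $\tG_v$ incident to $\tw$ joins $\tw$ to a vertex of $\tS_{i-1}\cup \tS_i \cup \tS_{i+1}\subseteq \tB_{i+1}$, because by the construction edges are introduced only within a stratum or between two consecutive strata. Hence $B_1(\tw,\tG_v)$ and $B_1(\tw,\tG_{i+1})$ coincide as induced subgraphs. Property $(R_{i+1})$ applied to $\tw\in \tB_i$ then supplies a graph isomorphism between this one-ball and $B_1(w,G)$, and the flagness of both complexes promotes it to a bijection between triangles through $\tw$ and triangles through $w$.

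The more interesting part handles the squares through $\tw$. In the forward direction, Lemma~\ref{cellular} sends a square $\tw\ta\tb\tc$ of $\widetilde{\bX}_v$ to a square $wabc$ of $\bX$, and Lemma~\ref{lem-locally-injective} ensures distinct squares through $\tw$ yield distinct $4$-tuples, giving injectivity. For surjectivity, given any square $wabc$ of $\bX$ through $w$, I would first lift the edge $wa$ to an edge $\tw\ta$ via the first step (so $\tw,\ta\in\tB_{i+1}$), and then invoke property $(S_{i+2})$ to produce $\tb,\tc \in \tB_{i+2}$ making $\tw\ta\tb\tc$ a square of $\widetilde{\bX}_{i+2}$ whose $f$-image is exactly $wabc$; Lemma~\ref{lem-locally-injective} identifies the resulting $\tc$ with the unique neighbor of $\tw$ lying above $c$, and since $\tw$ has no neighbors outside $\tB_{i+1}$, the induced $4$--cycle $\tw\ta\tb\tc$ remains an induced $4$--cycle in $\tG_v$, so the square genuinely sits in $\mbox{St}(\tw,\widetilde{\bX}_v)$.

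I do not anticipate a serious obstacle, since the bulk of the work is already encapsulated in the previously established inductive lemmas; the remaining task is essentially bookkeeping --- checking that each invocation of $(R_j)$ or $(S_j)$ is made at an index $j$ large enough to bring the vertices involved into $\tB_{j-1}$. Once the star isomorphisms are in place, the map $f$ is a cellular surjection (vertex-surjectivity follows by iterating $(R_j)$ along shortest paths from $v$ in the connected graph $G$), and it restricts to an isomorphism onto each star, which is precisely the definition of a covering map.
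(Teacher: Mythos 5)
Your plan parallels the paper's proof in its overall strategy: identify the vertex set of $\mbox{St}(\tw,\widetilde{\bX}_v)$ inside $\tB_{i+2}$, use $(R_{i+1})$/$(R_{i+2})$ for the one-ball, lift squares via $(S_{i+2})$ for surjectivity, and transfer cells using the flagness of both complexes together with Lemma~\ref{cellular}. The surjectivity argument and the correspondence of triangles and squares are essentially the same as in the paper.

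The injectivity argument, however, has a genuine gap. You claim that ``Lemma~\ref{lem-locally-injective} ensures distinct squares through $\tw$ yield distinct $4$-tuples, giving injectivity,'' but injectivity of the map on the set of squares through $\tw$ does not by itself imply injectivity of $f$ on the vertex set of the star. The problematic case is two distinct vertices $\tu, \tu'$ at distance $2$ from $\tw$ in the star, both mapping to the same $u\in V(G)$, sitting in squares $\tw\ta\tu\tb$ and $\tw\ta'\tu'\tb'$ whose images $waub$ and $wa'ub'$ are \emph{distinct} squares of $G$ (this can and does happen when $I(w,u)$ is an octahedron, which admits two squares through the antipodal pair $w,u$). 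Injectivity of the square map says nothing here: the two squares of $\widetilde{\bX}_v$ are allowed to be distinct precisely because their images are. Ruling this out requires an extra step --- the paper's proof handles exactly this situation by observing via the interval condition that $a'\sim a,b$, lifting to $\ta'\sim\ta,\tu$ by $(R_{i+2})$, and then invoking local injectivity of $f$ at $\ta'$ to force $\tu=\tu'$. You also omit the case of $\tu\sim\tw$ but $\tu'\nsim\tw$ with $f(\tu)=f(\tu')$, which the paper eliminates by picking a common neighbor $\tz$ of $\tw,\tu'$ and showing $f$ would fail to be locally injective at $\tz$. These two cases are not ``bookkeeping''; they are the core of the lemma and need to be argued.

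A second, smaller imprecision: when you argue that ``distinct squares through $\tw$ yield distinct $4$-tuples'' even in the matching case $\{a,c\}=\{a',c'\}$, you need local injectivity of $f$ not only at $\tw$ but also at the neighbor $\ta$, whose neighbors may lie in $\tS_{i+2}$; so the relevant instance is Lemma~\ref{lem-locally-injective} at level $i+2$, i.e.\ one needs to have run the inductive construction through stage $i+2$ before quoting it, as the paper does by working in $\tB_{i+2}$ throughout.
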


\begin{proof}
Note that, since $\widetilde{\bX}_v$ is a flag complex, a vertex $\tx$
of $\widetilde{\bX}_v$ belongs to $\mbox{St}(\tw,\widetilde{\bX}_v)$ if and only if either $\tx \in
B_1(\tw,\tG_v)$ or $\tx$ has two non-adjacent neighbors in
$B_1(\tw,\tG_v)$.

Let $\tw\in \tS_i$, i.e., $i$ is the distance between $\tv$ and $\tw$ in
$\tG_v$,
and consider the set $B_{i+2}(\tv,\tG_v)$. Then the vertex-set of
$\mbox{St}(\tw,\widetilde{\bX}_v)$ is included in $B_{i+2}(\tv,\tG_v)$. From
(R$_{i+2}$) we know that $f$ is an isomorphism
between the graphs induced by $B_1(\tw,\tG_v)$ and $B_1(w,G)$.

For any vertex $x$ in  $\mbox{St}(w,\bX)\setminus B_1(w,G)$ there exists a square $wuxu'$ in $G$. By (R$_{i+2}$), there exist $\tu,
\tu'$ both adjacent to $\tw$ in $\tG_v$ and such that $\tu \nsim \tu'$, and $f(\tu)=u,f(\tu')=u'$. By (S$_{i+2}$) applied
to $\tw,\tu$ and since $\tw$ has a unique neighbor $\tu'$ mapped to
$u'$, there exists a vertex $\tx$ in  $\tG_v$ such that $f(\tx) = x$, $\tx \sim
\tu, \tu'$ and $\tx \nsim \tw$. Consequently, $f|_{V\left(\mbox{St}(\tw,\widetilde{\bX}_v)\right)}$ is a surjection
from $V(\mbox{St}(\tw,\widetilde{\bX}_v))$ onto $V(\mbox{St}(w,\bX))$.

Now we show that $f|_{V\left(\mbox{St}(\tw,\widetilde{\bX}_v)\right)}$
is injective.
Suppose by way of contradiction that there exist two distinct vertices
$\tu, \tu'$ of $\mbox{St}(\tw,\widetilde{\bX}_v)$ such that $f(\tu) =
f(\tu') = u$. If $\tu,\tu' \sim \tw$, by condition (R$_{i+1}$) applied
to $\tw$, we get a contradiction.  Suppose first that $\tu \sim \tw$
and $\tu' \nsim \tw$ and let $\tz \sim \tw, \tu'$. This implies that
$w, u, z$ are pairwise adjacent in $G$. Since $f$ is an isomorphism
between the graphs induced by $B_1(\tw,\tG_v)$ and $B_1(w,G)$, we
conclude that $\tz \sim \tu$. But then $f$ is not locally injective
around $\tz$, contradicting the condition (R$_{i+2}$).  Suppose now
that $\tw \nsim \tu, \tu'$.  Let $\ta \nsim \tb$, respectively $\ta'
\nsim \tb'$, be vertices adjacent to both $\tu$ and $\tw$, and,
respectively, $\tu'$ and $\tw$.  If $\ta' = \ta$ or $\ta' = \tb$, then
applying (R$_{i+2}$) to $\ta'$, we get that $f(\tu)\neq
f(\tu')$. Hence further we suppose that $\ta' \notin \{\ta,\tb\}$.  By
(R$_{i+1}$) applied to $\tw$ we have that $a'\neq a\neq b \neq a'$ and
$a\nsim b$. In $G$, the vertices $a,b,a',b'$ belong to the interval  $I(w,u)$.
Consequently, by the interval condition, $a' \sim a,b$. By
(R$_{i+2}$) applied to $\tw$ and $\ta$, $\ta'\sim \ta$ and $\ta' \sim
\tu$. Thus, by (R$_{i+2}$) applied to $\ta'$, $\tu = \tu'$,
contradicting our choice of $\tu,\tu'$. In all cases, we get a
contradiction, thus $\tu$ and $\tu'$ as above do not exist.

Hence $f|_{V\left(\mbox{St}(\tw,\widetilde{\bX}_v)\right)}$ is a bijection between the vertex-sets of
$\mbox{St}(\tw,\widetilde{\bX}_v)$ and $\mbox{St}(w,\bX)$.
We show now that $\tu\sim \tu'$ in
$\mbox{St}(\tw,\widetilde{\bX}_v)$ if and only if $u \sim u'$ in $\mbox{St}(w,\bX)$.
If $\tu\sim \tu'$ then $u\sim u'$ by (R$_{i+2}$).
Assume now that $\tu \nsim \tu'$ and $u\sim u'$.
By (R$_{i+2}$), there exists $\tu'' \sim \tu$ with $f(\tu'')=u'$. This leads however to a contradiction
by the local injectivity of $f$.

By (R$_{i+2}$)
applied to $w$ and since $\bX$ and $\widetilde{\bX}_v$ are flag complexes,
$\ta\tb\tw$ is a triangle in $\mbox{St}(\tw,\widetilde{\bX}_v)$ if and only if $abw$ is
a
triangle in $\mbox{St}(w,\bX)$. By (R$_{i+2}$) and since $\bX$ is a flag
complex, if $\ta\tb\tc\tw$ is a square in $\mbox{St}(\tw,\widetilde{\bX})$, then $abcw$
is a square in $\mbox{St}(w,\bX)$. Conversely, by the conditions (R$_{i+2}$)
and (S$_{i+2}$) and the
flagness of $\widetilde{\bX}_v$, we conclude that if $abcw$ is a square in
$\mbox{St}(w,\bX)$,
then $\ta\tb\tc\tw$ is a square in $\mbox{St}(\tw,\widetilde{\bX}_v)$. Consequently, for
any $\tw \in \widetilde{\bX}_v$, the map $f|_{\mbox{St}(\tw,\widetilde{\bX}_v)}$ is an
isomorphism between $\mbox{St}(\tw,\widetilde{\bX}_v)$ and $\mbox{St}(w,\bX)$, and
thus $f$ is a
covering map.
\end{proof}

\begin{lemma} \label{house-condition-tX}
The graph $\widetilde{G}=G(\widetilde{\bX}_v)$ satisfies
the interval condition and the positioning condition with respect to $\tv$.
\end{lemma}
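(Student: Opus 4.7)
The plan is to derive both conditions from the inductive properties established above, most crucially from the star-isomorphism property of the covering map $f$ furnished by Lemma~\ref{covering_map}.

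I will start with the positioning condition. The construction of $\tG_{i+1}$ only adds edges incident to vertices of $\tS_{i+1}$, so each $\tG_i$ is the subgraph of $\tG$ induced on $\tB_i$, and any square of $\tG$ therefore already lies inside some $\tG_i$. A level-counting argument---an edge of $\tG$ joins vertices whose level indices differ by at most one, so the length of any path starting at $\tv$ is at least the maximal level reached---combined with (P$_i$) yields $d_{\tG}(\tv,\ta)=d_{\tG_i}(\tv,\ta)$ for every $\ta\in\tB_i$. Applying (U$_i$) to the square viewed inside $\tG_i$ then gives the desired positioning equation in $\tG$.

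For the interval condition, let $\ta,\tc$ be vertices of $\tG$ at distance $2$ and fix a common neighbor $\tb$. By Lemma~\ref{covering_map}, $f$ restricts to an isomorphism $\mbox{St}(\tb,\widetilde{\bX}_v)\to \mbox{St}(b,\bX)$; this immediately gives $a\neq c$ and $a\nsim c$, hence $d(a,c)=2$ in $G$. The interval condition for $G$ then forces $I(a,c)$ to be a square, a pyramid, or an octahedron, and I will show that $f$ induces an isomorphism between the subgraphs induced by $I(\ta,\tc)$ and $I(a,c)$. Injectivity and preservation of adjacencies/non-adjacencies among common neighbors of $\ta,\tc$ follow from the star isomorphisms at $\ta$ and at $\tb$.

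The remaining point is surjectivity: for any common neighbor $d$ of $a,c$ in $G$, I must produce a common neighbor $\td$ of $\ta,\tc$ in $\tG$ with $f(\td)=d$. The key observation is that in each of the three possible shapes of $I(a,c)$ the vertex $d$ lies in $\mbox{St}(b,\bX)$. Indeed, if $d=b$ or $d\sim b$ this is immediate; the only other situation, occurring in the square case and for the opposite vertex $b'$ in the octahedron case, is $d\nsim b$, and then $abcd$ is an induced $4$-cycle (since $a\nsim c$ and $b\nsim d$) and therefore a square $2$-cell of $X$ containing $b$. The star isomorphism at $\tb$ then supplies a unique lift $\td\in\mbox{St}(\tb,\widetilde{\bX}_v)$ of $d$, and because the isomorphism sends cells to cells, $\td\sim\ta,\tc$. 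The main delicate point is precisely this case analysis guaranteeing $d\in\mbox{St}(b,\bX)$, in particular handling the octahedron case through the induced square $abcb'$.
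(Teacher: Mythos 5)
Your proof is correct and follows the same overall strategy as the paper: derive both properties from Lemma~\ref{covering_map} (the star isomorphism) and the inductive conditions. The only difference in the interval-condition part is the choice of basepoint for the star: the paper works with $\mbox{St}(\tw,\widetilde{\bX}_v)$ where $\tw$ is an \emph{endpoint} of the $2$--interval, observing that every element of $I(\tw,\tw')$ lies in this star (common neighbors are in $B_1(\tw)$, and $\tw'$ itself is in a lifted square with $\tw$), so the interval condition transfers verbatim via the isomorphism onto $\mbox{St}(w,\bX)$; you instead work with the star of a common neighbor $\tb$, which forces the explicit case analysis showing each common neighbor $d$ of $a,c$ lies in $\mbox{St}(b,\bX)$ (through the induced square $abcd$ in the $d\nsim b$ case). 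Both versions are sound; the endpoint choice is marginally cleaner because it avoids that case split. Your positioning-condition argument makes explicit the level-counting fact (already implicit in the paper's one-line remark that PC$(\tv)$ is ``a consequence of (U$_i$) for sufficiently large $i$'') that distances from $\tv$ in $\tG_i$ agree with distances in $\tG$ for vertices of $\tB_i$, which is a helpful clarification.
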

\begin{proof}
  For every $\tw,\tw' \in V(\tG)$ such that $d(\tw,\tw')=2$, by
  Lemma~\ref{covering_map}, $d(w,w') = 2$, and by the interval
  condition in $G$, there exists a square $uwu'w'$ in $\mbox{St}(w,\bX)$. By
  Lemma~\ref{covering_map}, $\tw' \in V(\mbox{St}(\tw,\tX))$, and the
  interval $I(\tw,\tw')$ is contained in $\mbox{St}(\tw,\tX)$.
  Since the map $f|_{\mbox{St}(\tw,\widetilde{\bX}_v)}$ is an
  isomorphism onto its image (cf.\ Lemma \ref{covering_map}), the interval condition
  for $I(\tw,\tw')$ is satisfied.

  The positioning condition with respect to $\tv$, i.e.\ PC($\tv$), is a consequence of (U$_i$)
  for sufficiently large $i$.
\end{proof}

\begin{lemma}\label{simplyconnected}
  The complex
  $\widetilde{\bX}_v$ is simply-connected for
  any basepoint $v\in V({\bX})$. For any two vertices $\tv$ and $\tv'$ the corresponding complexes $\tX_v$ and $\tX_{v'}$ are isomorphic.
\end{lemma}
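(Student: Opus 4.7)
The plan is to deduce simple connectedness directly from Lemma~\ref{l:simconn} and then use the uniqueness of the universal cover to get the isomorphism $\tX_v\cong\tX_{v'}$.

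First, I would assemble the ingredients already established by the inductive construction. By Lemma~\ref{cellular} the complex $\widetilde{\bX}_v$ is a triangle-square flag complex (its cells are exactly the triangles and induced squares of $\tG_v$). By Lemma~\ref{Qi+1}, each $\tG_i$ satisfies the triangle and square-pyramid conditions with respect to the basepoint $\tv$; since every pair of vertices of $\tG_v$ at distance $\le i$ from $\tv$ lies in some $\tG_i$ by (P$_i$), the graph $\tG_v=G(\widetilde{\bX}_v)$ itself satisfies TC($\tv$) and SPC($\tv$). Lemma~\ref{l:simconn} then applies verbatim and gives that $\widetilde{\bX}_v$ is simply connected.

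For the second assertion, recall from Lemma~\ref{covering_map} that the cellular map $f\colon\widetilde{\bX}_v\to\bX$ built as the union of the $f_i$'s is a covering map. Combined with simple connectedness of $\widetilde{\bX}_v$ just proved, this shows that $\widetilde{\bX}_v$ is a universal cover of $\bX=X(G)$. The same argument, carried out with a different basepoint $v'\in V(\bX)$, produces another universal cover $\widetilde{\bX}_{v'}\to\bX$. By the uniqueness of the universal cover of a connected cell complex (quoted in the preliminaries, cf.\ \cite[page~67]{Hat}), the two covering spaces $\widetilde{\bX}_v$ and $\widetilde{\bX}_{v'}$ are isomorphic as cell complexes, yielding the desired conclusion.

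I do not foresee a serious obstacle: both parts reduce immediately to results established earlier in the paper, the only subtlety being to verify that the metric conditions TC($\tv$) and SPC($\tv$) pass from the finite stages $\tG_i$ to the direct limit $\tG_v$. This is routine since, for any vertex $\tw\in\tG_v$ at distance $k$ from $\tv$, the full relevant neighborhood witnessing these conditions already lives in $\tG_{k+1}$ by property (P$_{i+1}$), so the hypotheses of Lemma~\ref{l:simconn} are satisfied on the nose.
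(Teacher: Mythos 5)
Your proposal is correct and follows essentially the same route as the paper: invoke Lemma~\ref{l:simconn} using the fact that the conditions (Q$_i$) pass to the direct limit $\tG_v$, then combine Lemma~\ref{covering_map} with simple connectedness and universal-cover uniqueness to conclude. The extra detail you add about verifying TC($\tv$) and SPC($\tv$) at the limit via (P$_i$) is a sound elaboration of what the paper leaves implicit.
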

\begin{proof}
  Simple connectedness follows from Lemma \ref{l:simconn} and from the fact that $\tX_v$ satisfies the condition (Q$_i$) for every $i$.
  It follows then, by Lemma \ref{covering_map}, that $\tX_v$ is the universal cover of $X$, and the second statement is a consequence of the uniqueness of the universal cover (cf.\ e.g.\ \cite[page 67]{Hat}).
\end{proof}

Thus, for any choice of the basepoint we obtain the same universal
cover $\tX$ of $X$.  By Lemma \ref{house-condition-tX}, its
$1$--skeleton satisfies the interval and the positioning conditions.
This finishes the proof of Theorem \ref{t:m2}.

\section{Examples and extensions}
\label{s:rem}\label{s:DeltaM}

\subsection{Examples}
 \label{s:eg}
 Here, we provide examples of graphs satisfying our local conditions and not being the basis graphs of matroids. Of course, in view of Theorem \ref{t:m2} such examples arise as quotients of basis graphs of matroids under free actions of groups --- for basics on relations between group actions and covering spaces see e.g.\ \cite[Chapter 1.3]{Hat}. Note, that the quotient should be a graph (without multiple loops etc.) so that the displacement function for the group action should be large enough. For example, there is no such nontrivial action on $C_4$.

In fact our examples are the same as examples given in \cite[Theorem 2.3]{DHT} for slightly different purposes (see comments below). We follow the notations of \cite{DHT}.
Let $B_{n,n}$ be the basis graph of the complete matroid $M_{n,n}$, i.e., the one formed by the family of all $n$--element subsets of a set of cardinality $2n$.
Define a $\mathbb Z_2$--action on $B_{n,n}$ in the way that each vertex $v$ of $B_{n,n}$ is mapped by the generator of $\mathbb Z_2$ to the antipodal vertex $v^{\ast}$, i.e., the unique vertex at distance $n$ from $v$ (this is in fact the vertex corresponding to the complement of the $n$--element set $v$). It is easy to observe that this defines an action by graph automorphisms and that, for $n\geq 2$, this action is free.
It can be observed, that a combinatorial ball of radius $\lfloor n/2 \rfloor-1$ in $H_n:=B_{n,n}/\mathbb Z_2$ is isomorphic to a ball of the same radius in $B_{n,n}$.
Thus, for $n\geq 8$, balls of radii up to $3$ look as corresponding balls in $B_{n,n}$, i.e., $H_n$ satisfies our local conditions. Moreover, for such $n$, the quotient map $B_{n,n}\to H_n$ induces a map of the corresponding triangle-square complexes $X(B_{n,n})\to X(H_n)$, being a covering map. It follows that $\pi_1(X(H_n))=\mathbb Z_2$, and hence $H_n$ is not the basis graph of a matroid.

\begin{remark}
 It is stated in \cite{DHT} (cf.\ discussion after Theorem 2.6 there) that ``the graphs $H_n$ offer counterexamples to any number of futile conjectures(...), including Conjectures 2 and 3 of Maurer's thesis \cite{Mau}". As shown by our result a general form of Maurer's Conjecture 3 --- saying that the triangle-square complexes of basis graphs of matroids may be characterized as simply connected complexes satisfying some local conditions --- is true. In fact, as shown above, the existence of graphs $H_n$ is consistent with the picture, since the corresponding complexes are not simply connected for large $n$.
\end{remark}

\begin{remark}
 Note that the counterexamples to the original Maurer's Conjecture 3 \cite{Mau} provided in
\cite{DHT} do not satisfy our local conditions. The second example, cf.\ \cite[Fig.\ 1]{DHT}, does
not satisfy the local positioning condition, while
the first example, cf.\ \cite[Fig.\ 3]{DHT}, does not even satisfy the local triangle condition.
\end{remark}

\subsection{Extension to even $\Delta$--matroids}
\label{s:ext}
Now, we will show that our Theorem \ref{main_th} can be extended to
even $\Delta$--matroids.  A \emph{$\Delta$--matroid} is a collection
$\mathcal B$ of subsets of a finite set $I$, called \emph{bases} (not
necessarily equicardinal) satisfying the symmetric exchange property:
for any $A,B\in {\mathcal B}$ and $a\in A\Delta B$, there exists $b\in
A\Delta B$ such that $A\Delta\{ a,b\}\in {\mathcal B}$. A $\Delta$--matroid whose bases all have the same cardinality modulo $2$ is
called an \emph{even $\Delta$--matroid}. The \emph{basis graph}
$G=G({\mathcal B})$ of an even $\Delta$--matroid $\mathcal B$ is the
graph whose vertices are the bases of $\mathcal B$ and edges are the
pairs $A,B$ of bases differing by a single exchange, i.e., $\vert
A\Delta B\vert=2$. Extending Maurer's characterization of basis
graphs of matroids, it was shown in \cite{Che_bas} that a graph $G$ is the basis graph of an even $\Delta$--matroid if and only if $G$ satisfies
the positioning condition, the \emph{generalized link condition} (the
neighborhood of each vertex is the line graph of a finite graph) and the
\emph{generalized interval condition} (IC4) (each $2$--interval of $G$
contains a square and is an induced subgraph of the $4$--dimensional
octahedron). It was also noted in \cite{Che_bas} that the generalized
link condition is necessary, i.e., the interval condition (IC4) and
the positioning condition solely do not characterize basis graphs
of even $\Delta$--matroids. Wenzel \cite{Wen} showed that the triangle-square complexes defined by basis graphs of even $\Delta$-matroids are simply
connected.

Let $G$ be a (not necessarily finite) graph satisfying the local
positioning, the generalized link and the generalized interval
conditions. Inspecting the proofs of Lemma~\ref{l:noprop} and Theorem
\ref{t:m2} (namely, noting that each use of the interval condition
either employs a square or a pyramid in a $2$--interval), analogously one
can conclude that the $1$--skeleton of the universal cover
$\widetilde{X}=\widetilde{X(G)}$ of the triangle-square complex $X(G)$
of $G$ satisfies the positioning condition and the generalized
interval condition (IC4). Now, for any choice of the basepoint $v$,
the triangle-square complex $\widetilde{X}_v$ is isomorphic to
$\widetilde{X}$. Since the neighborhood of $\tilde{v}$ in the
$1$--skeleton of $\widetilde{X}_v$ coincides with $N(v)$ and thus is a
line graph by the generalized link condition, we conclude that the
$1$--skeleton $G(\widetilde{X})$ of $\widetilde{X}$ satisfies the
generalized link condition. From the result of \cite{Che_bas} it
follows that $G(\widetilde{X})$ is the basis graph of an even
$\Delta$--matroid, thus establishing the following result:

\begin{theorem}\label{th_Delta}
  For a graph $G$ the following conditions are equivalent:
  \begin{enumerate}[(i)]
    \item
    $G$ is the basis graph of an even $\Delta$--matroid;
    \item
    the triangle-square complex $X(G)$ is simply connected, every ball of radius $3$ in $G$ is isomorphic to a ball of radius $3$ in
    the basis graph of an even $\Delta$--matroid;
    \item
    the triangle-square complex $X(G)$ is simply connected, $G$
    satisfies the generalized interval condition (IC4), the generalized link condition, and the local positioning condition.
  \end{enumerate}
\end{theorem}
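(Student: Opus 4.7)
The plan is to follow the same three-step pattern as in the proof of Theorem~\ref{main_th}, namely $(i)\Rightarrow(ii)\Rightarrow(iii)\Rightarrow(i)$, replacing the role of Maurer's characterization (Theorem~\ref{Th_Mau1}) by the analogous characterization of basis graphs of even $\Delta$--matroids from~\cite{Che_bas}, and replacing the interval condition and the link condition by their generalized versions (IC4) and the generalized link condition, respectively. As in the matroid case the implications $(i)\Rightarrow(ii)\Rightarrow(iii)$ are essentially immediate: the simple connectivity of $X(G)$ for a basis graph $G$ of an even $\Delta$--matroid is Wenzel's theorem~\cite{Wen}; the local conditions in (iii) concern only configurations sitting inside balls of radius at most $3$, so they are inherited from the local model provided by (ii).

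For the hard direction $(iii)\Rightarrow(i)$, I would repeat the inductive construction of the universal cover $\widetilde{X(G)}=\bigcup_{i\ge 1}\widetilde{X}_i$ carried out in Section~\ref{s:main}. The scheme is: start from a basepoint $v$ with $\widetilde{B}_0=\{\tv\}$, $\widetilde{B}_1=B_1(v,G)$, and at each step extend $\widetilde{G}_i$ and $f_i$ by adjoining an $\tS_{i+1}$ defined as the set of equivalence classes of the relation $\equiv$ on pairs $(\tw,z)$ with $\tw\in\tS_i$ and $z\in B_1(f_i(\tw),G)\setminus f_i(B_1(\tw,\tG_i))$, using the same three clauses (Z1), (Z2), (Z3). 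I would then maintain the same invariants (P$_i$)--(U$_i$), but with the generalized interval condition (IC4) in place of the original interval condition.

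The crucial observation — already flagged in the excerpt — is that every appeal to the interval condition in the proof of Theorem~\ref{t:m2} (and in the supporting Lemmas~\ref{l:noprop} and~\ref{l:a1a2a3}--\ref{lem-carre-tordu}) only uses the fact that a $2$--interval contains a square (and possibly a pyramid apex), never the upper bound that the $2$--interval is at most a $3$--octahedron. Since (IC4) still guarantees the existence of a square inside every $2$--interval and still forbids induced $K_{2,3}$ and $W_4^-$ (these are subgraphs of the $4$--octahedron only when they sit inside a $2$--interval bigger than itself, which contradicts the definition), the verification of the invariants (P$_{i+1}$)--(U$_{i+1}$) goes through line by line. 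The conclusion is that the $1$--skeleton $G(\widetilde{X})$ of the universal cover satisfies (IC4) and the global positioning condition with respect to every basepoint.

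Finally, to apply the characterization of~\cite{Che_bas} I need the generalized link condition at some vertex of $G(\widetilde{X})$. By Lemma~\ref{simplyconnected} the universal cover does not depend on the basepoint, so I may take $\tv$ to project to a vertex $v\in G$ whose neighborhood is the line graph of a finite graph (given by hypothesis~(iii), which supplies the generalized link condition everywhere and hence in particular at $v$); then $N(\tv)$ in $G(\widetilde{X})$ is isomorphic to $N(v)$ in $G$ by condition~(R$_i$), so the generalized link condition transfers to $\tv$. By~\cite{Che_bas}, $G(\widetilde{X})$ is then the basis graph of an even $\Delta$--matroid, and in particular it is finite. Hence $\widetilde{X}=X(G)$ and $G$ itself is the basis graph of an even $\Delta$--matroid. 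I expect the main obstacle to be the bookkeeping for the case analysis in Proposition~\ref{equiv} (the transitivity of $\equiv$): one has to check that none of the six case arguments secretly invoked the three-dimensionality of the octahedron in a $2$--interval, and that each application of the no-propeller / no-half-open-book Lemma~\ref{l:noprop} remains valid under (IC4). Once this verification is made (and it is, as noted above, insensitive to the enlargement from octahedron to $4$--octahedron), the rest of the argument is formal.
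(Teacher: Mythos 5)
Your proposal follows essentially the same route as the paper: both cite Wenzel~\cite{Wen} for $(i)\Rightarrow(ii)$, both handle $(iii)\Rightarrow(i)$ by reinspecting the universal-cover construction of Theorem~\ref{t:m2} and noting that every appeal to the interval condition only uses the existence of a square or pyramid in a $2$--interval (so it survives the relaxation to (IC4)), and both transfer the generalized link condition to the universal cover via basepoint-independence (Lemma~\ref{simplyconnected}) before invoking the characterization of~\cite{Che_bas}. One small slip in your closing paragraph: you deduce $\widetilde{X}=X(G)$ from the \emph{finiteness} of $G(\widetilde{X})$, but finiteness alone does not force a cover to be trivial; the correct justification is that $X(G)$ is simply connected by hypothesis~(iii), hence is its own universal cover, so $G=G(\widetilde{X})$. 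The conclusion is right, only that stated reason needs adjusting.
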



\medskip

\noindent
{\bf Acknowledgements.} J.C. was partially supported by ANR project MACARON (\textsc{anr-13-js02-0002}).  V.C.\ was partially supported ANR projects TEOMATRO (\textsc{anr-10-blan-0207}) and GGAA (\textsc{anr-10-blan-0116}). D.O.\ was partially supported by the MNiSW grant N201 541738, and by Narodowe Centrum Nauki, decision no.\ DEC-2012/06/A/ST1/00259. The largest part of this
work was carried out while D.O.\ was visiting Aix-Marseille Universit\'e in September 2012.
The visit was partially supported by the ERC grant ANALYTIC no.\ 259527.
The research conditions and the environment  provided by the host institution are greatly
appreciated.

\begin{bibdiv}
\begin{biblist}

\bib{Bjo}{article}{
   author={Bj{\"o}rner, Anders},
   title={Topological methods},
   conference={
      title={Handbook of Combinatorics, Vol.\ 1,\ 2},
   },
   book={
      publisher={Elsevier},
      place={Amsterdam},
   },
   date={1995},
   pages={1819--1872},
   review={\MR{1373690 (96m:52012)}},
}

\bib{BjKoLo}{article}{
   author={Bj{\"o}rner, Anders},
   author={Korte, Bernhard},
   author={Lov{\'a}sz, L{\'a}szl{\'o}},
   title={Homotopy properties of greedoids},
   journal={Adv. in Appl. Math.},
   volume={6},
   date={1985},
   number={4},
   pages={447--494},
   issn={0196-8858},
   review={\MR{826593 (87d:05051)}},
   doi={10.1016/0196-8858(85)90021-1},
}

\bib{BjLVStWhZi}{book}{
   author={Bj{\"o}rner, Anders},
   author={Las Vergnas, Michel},
   author={Sturmfels, Bernd},
   author={White, Neil},
   author={Ziegler, G{\"u}nter M.},
   title={Oriented matroids},
   series={Encyclopedia of Mathematics and its Applications},
   volume={46},
   publisher={Cambridge University Press},
   place={Cambridge},
   date={1993},
   pages={xii+516},
   isbn={0-521-41836-4},
   review={\MR{1226888 (95e:52023)}},
}

\bib{BCCGO}{article}{
   author={Bre{\v{s}}ar,  Bo{\v{s}}tjan},
   author={Chalopin, J\'er\'emie},
   author={Chepoi, Victor},
   author={Gologranc, Tanja},
   author={Osajda, Damian},
   title={Bucolic complexes},
   journal={Adv. Math.},
   volume={243},
   date={2013},
   pages={127--167},
   issn={0001-8708},
   review={\MR{3062742}},
   doi={10.1016/j.aim.2013.04.009},
}

\bib{BoGeWh}{article}{
   author={Borovik, Alexandre V.},
   author={Gelfand, Israel M.},
   author={White, Neil},
   title={Coxeter matroid polytopes},
   journal={Ann. Comb.},
   volume={1},
   date={1997},
   number={2},
   pages={123--134},
   issn={0218-0006},
   review={\MR{1629677 (99d:05016)}},
   doi={10.1007/BF02558470},
}

\bib{Che_bas}{article}{
   author={Chepoi, Victor},
   title={Basis graphs of even delta-matroids},
   journal={J. Combin. Theory Ser. B},
   volume={97},
   date={2007},
   number={2},
   pages={175--192},
   issn={0095-8956},
   review={\MR{2290319 (2007k:05041)}},
   doi={10.1016/j.jctb.2006.05.003},
}

\bib{DHT}{article}{
   author={Donald, J. D.},
   author={Holzmann, C. A.},
   author={Tobey, M. D.},
   title={A characterization of complete matroid base graphs},
   journal={J. Combinatorial Theory Ser. B},
   volume={22},
   date={1977},
   number={2},
   pages={139--158},
   review={\MR{0480194 (58 \#377)}},
}

\bib{GeGoMcPhSe}{article}{
   author={Gelfand, I. M.},
   author={Goresky, R. M.},
   author={MacPherson, R. D.},
   author={Serganova, V. V.},
   title={Combinatorial geometries, convex polyhedra, and Schubert cells},
   journal={Adv. in Math.},
   volume={63},
   date={1987},
   number={3},
   pages={301--316},
   issn={0001-8708},
   review={\MR{877789 (88f:14045)}},
   doi={10.1016/0001-8708(87)90059-4},
}

\bib{Gr}{article}{
   author={Gromov, Mikhail},
   title={Hyperbolic groups},
   conference={
      title={Essays in group theory},
   },
   book={
      series={Math. Sci. Res. Inst. Publ.},
      volume={8},
      publisher={Springer},
      place={New York},
   },
   date={1987},
   pages={75--263},
   review={\MR{919829 (89e:20070)}},
}

\bib{Hat}{book}{
   author={Hatcher, Allen},
   title={Algebraic topology},
   publisher={Cambridge University Press},
   place={Cambridge},
   date={2002},
   pages={xii+544},
   isbn={0-521-79160-X},
   isbn={0-521-79540-0},
   review={\MR{1867354 (2002k:55001)}},
}

\bib{HoNoTo}{article}{
   author={Holzmann, C. A.},
   author={Norton, P. G.},
   author={Tobey, M. D.},
   title={A graphical representation of matroids},
   journal={SIAM J. Appl. Math.},
   volume={25},
   date={1973},
   pages={618--627},
   issn={0036-1399},
   review={\MR{0389627 (52 \#10458)}},
}

\bib{LasVer}{article}{
   author={Las Vergnas, Michel},
   title={Bases in oriented matroids},
   journal={J. Combin. Theory Ser. B},
   volume={25},
   date={1978},
   number={3},
   pages={283--289},
   issn={0095-8956},
   review={\MR{516260 (81g:05046)}},
   doi={10.1016/0095-8956(78)90003-5},
}

\bib{Mau}{article}{
   author={Maurer, Stephen B.},
   title={Matroid basis graphs. I},
   journal={J. Combinatorial Theory Ser. B},
   volume={14},
   date={1973},
   pages={216--240},
   review={\MR{0317971 (47 \#6520)}},
}

\bib{Mye}{article}{
   author={Myers, Sumner B.},
   title={Riemannian manifolds with positive mean curvature},
   journal={Duke Math. J.},
   volume={8},
   date={1941},
   pages={401--404},
   issn={0012-7094},
   review={\MR{0004518 (3,18f)}},
}

\bib{Oxley}{book}{
   author={Oxley, James},
   title={Matroid theory},
   series={Oxford Graduate Texts in Mathematics},
   volume={21},
   edition={2},
   publisher={Oxford University Press},
   place={Oxford},
   date={2011},
   pages={xiv+684},
   isbn={978-0-19-960339-8},
   review={\MR{2849819 (2012k:05002)}},
}

\bib{Wen}{article}{
   author={Wenzel, Walter},
   title={Maurer's homotopy theory for even $\Delta$-matroids and related
   combinatorial geometries},
   journal={J. Combin. Theory Ser. A},
   volume={71},
   date={1995},
   number={1},
   pages={19--59},
   issn={0097-3165},
   review={\MR{1335775 (97a:05057)}},
   doi={10.1016/0097-3165(95)90014-4},
}

\end{biblist}
\end{bibdiv}

\end{document}